\let\urlorig\url
\renewcommand{\url}[1]{%
  \begin{otherlanguage}{english}\urlorig{#1}\end{otherlanguage}%
}
\newtheorem{proposition}{Proposition}
\newtheorem{theorem}{Theorem}
\newtheorem{remark}{Remark}
\newcommand{\sJ}{{\mathscr J}}
\newcommand{\bJ}{{\bf J}}
\newcommand{\dif}{{\rm d}}
\newcommand{\Dif}{{\rm D}}
\newcommand{\vits}{{u}}
\newcommand{\vitsL}{{w}}
\newcommand{\Vits}{{U}}
\newcommand{\VitsL}{{W}}
\newcommand{\Rho}{{R}}
\newcommand{\vol}{{v}}
\newcommand{\Vol}{{V}}
\newcommand{\Y}{{Z}}
\newcommand{\bu}{{\bf u}}
\newcommand{\bv}{{\bf v}}
\newcommand{\bW}{{\bf W}}
\newcommand{\bP}{{\bf P}}
\newcommand{\bF}{{\bf F}}
\newcommand{\bPhi}{\boldsymbol{\Phi}}
\newcommand{\bPsi}{\boldsymbol{\Psi}}
\newcommand{\blambda}{\boldsymbol{\lambda}}
\newcommand{\bU}{{\bf U}}
\newcommand{\ubU}{\underline{\bf U}}
\newcommand{\ubu}{\underline{\bf u}}
\newcommand{\ubv}{\underline{\bf v}}
\newcommand{\ublambda}{\underline{\blambda}}
\newcommand{\uXi}{\underline{\Xi}}
\newcommand{\bUuz}{\bv_{0}}
\newcommand{\bUuzx}{\bv_{0,x}}
\newcommand{\uc}{{\underline{c}}}
\newcommand{\uk}{{\underline{k}}}
\newcommand{\bG}{{\bf G}}
\newcommand{\bV}{{\bf V}}
\newcommand{\bM}{{\bf M}}
\newcommand{\ubM}{\underline{\bf M}}
\newcommand{\uP}{\underline{P}}
\newcommand{\Real}{{\rm Re}}
\newcommand{\Imag}{{\rm Im}}
\newcommand{\R}{{\mathbb R}}
\newcommand{\Z}{{\mathbb Z}}
\newcommand{\ee}{{\rm e}}
\newcommand{\id}{{\rm Id}}
\newcommand{\Euler}{{\sf E}}
\newcommand{\Legendre}{{\sf L}}
\newcommand{\Hess}{{\sf Hess}}
\newcommand{\range}{{\sf R}}
\newcommand{\Lag}{\mathscr{L}}
\newcommand{\lag}{{\ell}}
\newcommand{\meane}{{\rm e}}
\newcommand{\meanE}{{\rm E}}
\newcommand{\meanvol}{{\rm v}}
\newcommand{\meanrho}{\varrho}
\newcommand{\meanvitsL}{{\rm w}}
\newcommand{\meanpress}{{\rm p}}
\newcommand{\press}{{p}}
\newcommand{\chem}{{g}}
\newcommand{\meanchem}{{\rm g}}
\renewcommand{\Cap}{\mathscr{K}}
\renewcommand{\cap}{{\kappa}}
\newcommand{\En}{\mathscr{E}}
\newcommand{\en}{\mbox{{\begin{cursive}{\emph{e}}\end{cursive}}}}
\newcommand{\Ec}{\mathscr{I}}
\newcommand{\Ham}{\mathscr{H}}
\newcommand{\Hamk}{\mathscr{H}^{(k)}}
\newcommand{\Impulse}{\mathscr{Q}}
\newcommand{\Impulseflux}{\mathscr{S}}
\newcommand{\Linvar}{{\bf A}}
\newcommand{\Linvarz}{{\bf A}^{\!(0)}}
\newcommand{\Linvaru}{{\bf A}^{\!(1)}}
\newcommand{\Linvard}{{\bf A}^{\!(2)}}
\newcommand{\Lin}{{\mathscr{A}}}
\newcommand{\Linz}{{\mathscr{A}}^{(0)}}
\newcommand{\Linu}{{\mathscr{A}}^{(1)}}
\newcommand{\Lind}{{\mathscr{A}}^{(2)}}
\newcommand{\Lint}{{\mathscr{A}}^{(3)}}
\newcommand{\uLinvar}{\underline{\bf A}}
\newcommand{\Op}{{\bf C}}
\renewcommand{\Dz}{{D}^{(0)}}
\newcommand{\Du}{{D}^{(1)}}
\newcommand{\Dd}{{D}^{(2)}}
\newcommand{\Evans}{{D}}
\renewcommand{\d}{\partial}
\newcommand{\cO}{\mathcal{O}}
\newcommand{\Su}{{\bf \Sigma}_1}
\newcommand{\Sd}{{\bf \Sigma}_2}
\newcommand{\St}{{\bf \Sigma}_3}
\newcommand{\su}{{\bf \sigma}_1}
\newcommand{\st}{{\bf \sigma}_3}
\begin{document}
\title{Slow modulations of periodic waves in Hamiltonian PDEs, with application to capillary fluids}
\author{S. Benzoni-Gavage\thanks{benzoni@math.univ-lyon1.fr}, P. Noble\thanks{noble@math.univ-lyon1.fr}, and L.M. Rodrigues\thanks{rodrigues@math.univ-lyon1.fr}\\
Universit\'e de Lyon,
CNRS UMR 5208,
Université Lyon 1,\\
Institut Camille Jordan,
43 bd 11 novembre 1918\\
F-69622 Villeurbanne cedex}
\maketitle

\begin{abstract}
Since its elaboration by Whitham, almost fifty years ago, modulation theory has been known to be closely related to the stability of periodic traveling waves. However, it is only recently that this relationship has been elucidated, and that fully nonlinear results have been obtained. These only concern dissipative systems though:  reaction-diffusion systems were first considered  by Doelman, Sandstede, Scheel, and Schneider [Mem. Amer. Math. Soc. 2009], and viscous systems of conservation laws have been addressed by Johnson, Noble, Rodrigues, and Zumbrun [preprint 2012]. Here, only nondissipative models are considered, and a most basic question is investigated, namely the  
expected link between the hyperbolicity of modulated equations and the spectral stability of periodic traveling waves to sideband perturbations. This is done first in an abstract Hamiltonian framework, which encompasses a 
number of dispersive models, in particular the well-known (generalized) Korteweg--de Vries equation, and the less known Euler--Korteweg system, in both Eulerian coordinates and Lagrangian coordinates.
The latter is itself an abstract framework for several models arising in water waves theory, superfluidity, and quantum hydrodynamics. As regards its application to compressible capillary fluids, attention is paid here to untangle the interplay between traveling waves/modulation equations in Eulerian coordinates and those in Lagrangian coordinates. In the most general setting, it is proved that the hyperbolicity of modulated equations is indeed necessary for the spectral stability of periodic traveling waves. This extends earlier results by
Serre [Comm. Partial Differential Equations 2005], Oh and Zumbrun [Arch. Ration. Mech. Anal. 2003], and Johnson, Zumbrun and Bronski [Phys. D 2010].
In addition, reduced necessary conditions are obtained in the small amplitude limit. Then numerical investigations are carried out for the modulated equations of the Euler--Korteweg system with two types of `pressure' laws, namely the quadratic law of shallow water equations, and the nonmonotone van der Waals pressure law. Both the evolutionarity and the hyperbolicity of the modulated equations are tested, and regions of modulational instability are thus exhibited. 
\end{abstract}

{\small \paragraph {\bf Keywords:} Whitham modulated equations, traveling wave, spectral stability, modulational instability, Lagrangian coordinates.
}

{\small \paragraph {\bf AMS Subject Classifications:} 35B10; 35B35;  35Q35; 35Q51; 35Q53; 37K05; 37K45.
}


\section{Introduction}
This work is  motivated by the nonlinear waves analysis of the so-called Euler--Korteweg system, which arises in the modelling of capillary fluids~--~these comprise liquid-vapor mixtures
(for instance highly pressurized and hot water in nuclear reactors cooling system, in which the presence of vapor is actually dramatic), superfluids (Helium 
near absolute zero), or even regular fluids at sufficiently small scales (think of ripples on shallow water or other thin films).
In one space dimension, the most general form of the Euler--Korteweg system we consider is
\begin{equation}\label{eq:EKabs1d}
\left\{\begin{array}{l}\partial_t\rho +\partial_x (\rho \vits)\,=\,0\,,\\ [5pt]
\partial_t \vits + \vits\partial_x\vits \,+\,\partial_x(\Euler_\rho \En )\,=\,0\,,
\end{array}\right.
\end{equation}
 in Eulerian coordinates,
its counterpart in mass Lagrangian coordinates being
\begin{equation}\label{eq:EKabsLagb}
\left\{\begin{array}{l}\partial_t{\vol} \,=\,\partial_y{\vits}\,,\\ [5pt]
\partial_t {\vits} \,=\, \partial_y( \Euler_\vol \en)\,,
\end{array}\right.
\end{equation}
where $\rho$ is the fluid density, $\vol=1/\rho$ its specific volume, $\vits$ its velocity,  viewed either as a function of $(t,x)$ or as a function of $(t,y)$, where $y$ is the mass Lagrangian coordinate (by definition, $\dif y = \rho \dif x\,-\,\rho\,\vits\,\dif t$).
The energy density $\En$ in \eqref{eq:EKabs1d} and the specific energy $\en$ in \eqref{eq:EKabsLagb}
are related through $\En =\rho \en$, or equivalently $\en=\vol\En$, and are regarded as functions of $(\rho,\rho_x)$ and $(\vol,\vol_y)$ respectively. In those systems, the notation $\Euler$ stands for the Euler operator, that is
$$\Euler_\rho \En\,=\,\frac{\partial \En }{\partial \rho}\,-\,\Dif_{x}\left(\frac{\partial \En }{\partial \rho_{x}}\right)\,,\;\Euler_\vol \en\,=\,\frac{\partial \en }{\partial \vol}\,-\,\Dif_{y}\left(\frac{\partial \en }{\partial \vol_{y}}\right)\,,$$
where $\Dif_{x}$ and $\Dif_{y}$ mean total derivatives.
A widely used class of energies, dating back to Korteweg's theory of capillarity, read
\begin{equation}
\label{eq:EnK}
\En (\rho,\rho_x)=F(\rho)\,+\,\frac{1}{2}\Cap(\rho)\,(\rho_x)^2\,,
\end{equation}
or equivalently
\begin{equation}
\label{eq:enK}\en(\vol,\vol_y)\,=\,f(\vol)\,+\,\frac{1}{2}\,\cap(\vol)\,(\vol_y)^2\,,
\end{equation}
with the relationships $F=\rho f$, $\cap=\rho^5\Cap$, and with various choices of $\Cap$
(as far as \eqref{eq:EKabsLagb} is concerned, $\cap$ is often chosen to be constant, 
whereas quantum hydrodynamics equations correspond to $\rho\Cap=$~constant).

\paragraph{Abstract framework} Equations \eqref{eq:EKabs1d} and \eqref{eq:EKabsLagb} fall into the class of
abstract Hamiltonian systems of evolution PDEs of the form
\begin{equation}
\label{eq:absHam}
\partial_t\bU = \sJ (\Euler \Ham[\bU])\,,
\end{equation}
where $\sJ=\partial_x \bJ$ is a skew-symmetric differential operator, $\bJ$ being a symmetric, nonsingular matrix with constant coefficients, 
$\Ham$ is a functional involving first order derivatives only, and $\Euler$ denotes again the Euler operator:
$$ \Euler \Ham[\bU]_{\alpha}= \frac{\partial \Ham}{\partial U_\alpha}(\bU,\bU_{x}) \,-\,\Dif_x\left(\frac{\partial \Ham}{\partial U_{\alpha,x}}(\bU,\bU_{x})\right)\,,\quad \alpha\in \{1,\ldots,N\}$$
if $\bU$ has $N$ components.
Equation \eqref{eq:absHam} being space-invariant, it admits a conserved quantity called an \emph{impulse}, say
$\Impulse$  such that
$$\sJ \Euler \Impulse[\bU] = \partial_x \bU\,,$$
and since $\bJ$ is nonsingular we can explicitly take  for $\Impulse$ the quadratic quantity
$$\Impulse(\bU):= \tfrac{1}{2}\, \bU\cdot \bJ^{-1} \bU\,.$$
For further use, let us mention that associated with $\Impulse$ is the local conservation law
\begin{equation}
\label{eq:impulsecl}
\partial_t\Impulse(\bU) = \partial_x( \Impulseflux[\bU])\,
\end{equation}
satisfied along any smooth solution of \eqref{eq:absHam}, 
where
\begin{equation}
\label{eq:impulseflux}
 \Impulseflux[\bU]\,:=\,\bU \cdot \Euler \Ham[\bU]\,+\,\Legendre \Ham[\bU]\,,\quad\Legendre \Ham[\bU]\,:=\,
 U_{\alpha,x}\,\frac{\partial \Ham}{\partial  U_{\alpha,x}}(\bU,\bU_{x}) - \Ham(\bU,\bU_{x})\,.
 \end{equation}
 The dot $\cdot$ here above is just for the `canonical' inner product 
 $\bU\cdot\bV=U_\alpha V_\alpha$ in $\R^N$, and the letter $\Legendre$ stands for the Legendre transform (even though it is considered in the original variables
$(\bU,\bU_{x})$). In its definition we have used Einstein's convention of summation over repeated indices, and we shall do so repeatedly in the sequel.

\paragraph{Examples} The Euler--Korteweg system \eqref{eq:EKabs1d} fits into this framework
with $$ \bU= \left(\begin{array}{c} \rho \\ \vits\end{array}\right)\,,\quad
\Ham=\frac{1}{2}\rho \vits^2+\En(\rho,\rho_x)\,,\quad
\Impulse=\rho \vits\,,\quad \Impulseflux=\rho\, \Euler_\rho \En +\Legendre\En\,,$$
as well as the system in Lagrangian coordinates \eqref{eq:EKabsLagb},
with
$$\bU= \left(\begin{array}{c} \vol \\ \vits\end{array}\right)\,,\quad
\Ham=\frac{1}{2}\vits^2+\en(\vol,\vol_y)\,,\quad
 \Impulse 
= \vol \vits\,,\quad \Impulseflux=\vol\, \Euler_\vol \en +\Legendre\en\,.$$
An even simpler example is the scalar equation
\begin{equation}
 \label{eq:KdVabs}
\partial_tv=\partial_x(\Euler_v \Ham)\,,
\end{equation}
with $\Impulse = \tfrac{1}{2}\, v^2$, comprising the
generalized Korteweg--de Vries equation (gKdV),
\begin{equation}
 \label{eq:KdVgen}
\partial_t \vol+ \partial_x \press(\vol)= -\partial_x^3\vol\,,
\end{equation}
if we take
$$\Ham 
=f(v)\,+\,\frac{1}{2}v_x^2\,,\quad f '=-\press\,.$$
In this case, $\Impulseflux= -f-pv -vv_{xx}+\frac{1}{2}v_x^2$ -~and \eqref{eq:impulsecl} is the well-known conservation law for $v^2$ when $v$ is solution to 
(gKdV).
 
We will focus on the 'generic' situation  in which
the abstract system \eqref{eq:absHam} admits a  family of periodic traveling wave solutions parametrized, up to translations, by their speed and $N+1$ constants of integration. This rather large number of degrees of freedom makes the stability of periodic waves a difficult problem.
Furthermore, in the context of Hamiltonian PDEs we can only hope for neutral stability, namely that the
full spectrum of linearized equations lies on the imaginary axis. On the one hand, this kind of spectral stability makes the nonlinear stability analysis more delicate to tackle than in situations involving dissipation processes that are likely to push the spectrum~--~except for the null eigenvalue linked to translation invariance~--~into the left half plane. (Nonlinear results have recently been obtained concerning reaction-diffusion systems \cite{DSSS}, and viscous systems of conservation laws \cite{JNRZ}.)
On the other hand, the underlying variational framework can be of great help to prove nonlinear stability results. This has been done for a wide variety of solitary waves, see \cite{AnguloPava} and references therein. The literature on the stability of periodic waves is much more limited,
and nonlinear results are limited to stability under perturbations of the same period \cite{AnguloPava}.
Up to our knowledge, spectral stability has been proved for periodic wave solutions to the (standard) Korteweg--de Vries equation by using its integrability  to compute {\it explicitly} the spectrum 
\cite{BoDe}, and for small amplitude periodic wave solutions to a limited number of dispersive equations, comprising the nonlinear Schr\"odinger equation and the generalized Korteweg--de Vries equation 
\cite{GallayHaragus}. 
It is still a wide open problem for large amplitude periodic wave solutions to more general, dispersive PDEs.

As regards spectral stability, it is more tractable if sideband perturbations only are considered. There are indeed several analytical tools~--~like for instance Evans functions and associated winding numbers~--~ to look for possible spectrum in the vicinity of zero. A related topic is modulational stability, in connection with
Whitham's approach of slow modulations to periodic waves (the most famous modulational instability being the one shown by Benjamin and Feir for Stokes water waves). 

By ~--~at least formal~--~ asymptotic analysis, we can see that slow modulations to periodic waves are governed by a system of averaged equations. Its size is the dimension of the periodic orbits' manifold, here $N+2$. This will be made more precise in Section \ref{s:permod}, 
of which the main purpose is to extend to our abstract framework a result 
previously known for (gKdV) \cite{JohnsonZumbrunBronski} and for viscous systems of conservation laws
\cite{OhZumbrun03a,OhZumbrun03b,Serre}. This result gives a quantitative relationship between the 
sideband stability  of periodic traveling waves and the spectral properties of modulated equations. In particular, it shows that a necessary condition for the stability of a given periodic traveling wave is the (weak) hyperbolicity of modulated equations at the corresponding point in parameters' space. In other words, we give a rigorous proof that modulational stability is necessary for spectral stability, 
a result that is often taken for granted in the physics literature~--~ back to 1970, Whitham himself was indeed saying that
`the relation of the stability of the periodic wave with the type of the [modulated equations is] given in the previous papers' \cite{Whitham70}.
In addition, we investigate in some detail how the modulated equations degenerate in the small amplitude limit, and receive a reduced system coupled with a $2\times 2$ system for the wavenumber and the amplitude of the wave. This extends to our abstract framework observations that were made by Whitham \cite{Whitham}.
Unsurprisingly, when applied for instance to the Euler-Korteweg system, that limit gives as a reduced system the lower-order, Euler equations. Hence a stability condition for small amplitude periodic waves: the Euler system must be hyperbolic at the mean value of the wave. This rather natural condition does not seem to have been pointed out earlier. A reason is certainly that modulated equations have mostly been considered for \emph{scalar} models, like KdV or the Klein-Gordon equation in \cite{Whitham}, for which the hyperbolicity of the reduced model -~the inviscid Burgers equation for KdV, the wave equation for Klein-Gordon~-~ is trivial.

In Section \ref{s:EK}, we concentrate on the Euler--Korteweg system. We derive modulated equations in both kinds of coordinate systems,  namely the Eulerian one \eqref{eq:EKabs1d}, and the  
mass Lagrangian one \eqref{eq:EKabsLagb}. In addition, we point out a nice~--~if not surprising~---~relationship between them. To be precise we show that, away from vacuum, the modulated system for \eqref{eq:EKabsLagb} is equivalent to the modulated system for  \eqref{eq:EKabs1d} through a mass Lagrangian change of coordinates, hence the following commutative diagram.

\vspace{2mm}
\begin{center}
\fbox{
\begin{tabular}{ccccc}
& & \begin{minipage}{4cm} \centering mass Lagrangian \\ change of coordinates \end{minipage}  & & \\
& (1) & $\longrightarrow$ &  (2) & \\
\begin{minipage}{2cm}  \centering Whitham's \\ averaging \end{minipage}
& $\downarrow$ & & $\downarrow$ & \\
& $\langle \mbox{\rm 1} \rangle$ & $\longrightarrow$ &  $\langle\mbox{\rm 2} \rangle$ & \\
\end{tabular}
}
\end{center}
\vspace{2mm}
\noindent
Then in Section \ref{s:num} we go further into specialized cases and investigate in more details the periodic orbits' manifold when the energy is of Korteweg type \eqref{eq:EnK}. In this case, the nature of the phase portrait associated with profile equations for traveling waves highly depends on the monotonicity and convexity properties of pressure in terms of volume \cite{SBG-DIE}. We have considered two types of pressure laws, each one being motivated by a specific physical application. At first, we have taken a quadratic pressure with respect to density, 
which corresponds to shallow water equations~--~ the Korteweg part of the energy then taking into account
surface tension on the water surface.
A more involved case arises with van der Waals type pressure laws, typically corresponding to liquid-vapor mixtures (with capillarity effects).
In both cases~--~shallow water and van der Waals~--~, we have investigated numerically the hyperbolicity of Whitham's equations associated with various families of periodic waves.  We have found rather large regions of hyperbolicity. Failure of hyperbolicity occurs for waves `close to' unstable constant states/solitary waves or/and with sufficiently large periods.

\section{Periodic waves and modulated equations}\label{s:permod}

\subsection{General material}\label{ss:mat}
\paragraph{Traveling wave solutions}
A traveling wave solution to \eqref{eq:absHam} of speed $c$ is characterized by the profile equations
\begin{equation}
\label{eq:absHamprof}\Euler ( \Ham +c \Impulse)[\ubU] = \blambda\,,
\end{equation}
where the components $\lambda_\alpha$ of $\blambda\in\mathbb{R}^N$ are merely constants of integration. As was observed by Benjamin \cite{Benjamin}, the impulse flux $\Impulseflux$ is involved in the Hamiltonian associated with the Euler--Lagrange equations \eqref{eq:absHamprof}, which turns out to be 
$\Impulseflux +c \Impulse$. In other words, solutions of \eqref{eq:absHamprof} must satisfy
\begin{equation}
\label{eq:absHamprofint}\Impulseflux[\ubU] + c \Impulse(\ubU) = \mu
\end{equation}
for some new  (scalar) parameter $\mu$.
The profile $\ubU$ may be viewed as a \emph{stationary} solution of the abstract Hamiltonian system \eqref{eq:absHam} rewritten in a frame moving with speed $c$, that is
\begin{equation}
\label{eq:absHammov}
\partial_t\bU = \sJ (\Euler (\Ham+ c \Impulse)[\bU])\,.
\end{equation}
We may also note that in this moving frame the additional conservation law \eqref{eq:impulsecl} reads
$$\partial_t\Impulse(\bU) = \partial_x( (\Impulseflux +c \Impulse)[\bU])\,,$$
which obviously admits the profile $\ubU$ as a stationary solution, according to \eqref{eq:absHamprofint}.

\paragraph{Linearized problem}
In order to investigate the stability of $\ubU$ as a solution of \eqref{eq:absHammov}, we start by linearizing this system about 
$\ubU$, which yields
\begin{equation}
\label{eq:absHamlin}
\partial_t\bU = \Lin \bU\,,\; \Lin:=\sJ \Linvar\,,\;\Linvar:= \Hess (\Ham+ c \Impulse)[\ubU]\,.
\end{equation}
The Hessians here above are given by
$$\Hess \Impulse[\ubU]=\bJ^{-1}$$
whatever $\ubU$, and
$$(\Hess \Ham[\ubU] \bU)_\alpha=\, \frac{\partial^2 \Ham}{\partial U_\alpha \partial U_\beta} U_\beta \,+\,
\frac{\partial^2 \Ham}{\partial U_\alpha \partial U_{\beta,x}} U_{\beta,x}
\,-\,\Dif_x\left(\frac{\partial^2 \Ham}{\partial U_{\alpha,x}\partial U_\beta} U_\beta\,+\,\frac{\partial^2 \Ham}{\partial U_{\alpha,x}\partial U_{\beta,x}} U_{\beta,x}\right)\,,$$
where all second order derivatives of $\Ham$ are evaluated at $(\ubU,\ubU_{x})$.
By differentiating the profile equations  \eqref{eq:absHamprof} with respect to $x$ we observe as usual with translation-invariant problems that 
$\Linvar \ubU_x=0$, hence also $\Lin \ubU_x=0$.
Furthermore, we have several parameters, namely the speed $c$, and the constants of integration $\lambda_\alpha$, $\mu$. 
If they are all independent, as is typically the case with periodic traveling waves, we receive remarkable identities 
by differentiating the profile equations  \eqref{eq:absHamprof} with respect to those parameters. This yields
$$\begin{array}{ll}
\Linvar \ubU_c = - \Euler \Impulse [\ubU]\,,& \mbox{hence} \quad\Lin \ubU_c = -\ubU_x\,,\\ [5pt]
\Linvar \ubU_\mu = 0\,,& \mbox{hence} \quad\Lin \ubU_\mu = 0\,,\\ [5pt]
\Linvar \ubU_{\lambda_\alpha} = \ee_{\alpha}\,,& \mbox{hence} \quad\Lin \ubU_{\lambda_\alpha} = 0\,.
\end{array}$$
(Here above we have denoted by $\ee_{\alpha}$ the $\alpha$-th vector of the `canonical' basis in the $\bU$-space, and the subscripts $c$, $\mu$, $\lambda_\alpha$ stand for partial derivatives with respect to those parameters.)
When $\ubU$ is periodic, say of period $\Xi$, this period of course depends on the parameters $c$, $\lambda_\alpha$, $\mu$, and thus the derivatives $\ubU_c$, $\ubU_\mu$, $\ubU_{\lambda_\alpha}$ have no reason to be periodic. However, as pointed out in \cite{BronskiJohnsonKapitula}, we can set up the generalized kernel of  $\Lin$ in the space of $\Xi$-periodic functions with linear combinations of those derivatives.
As a matter of fact, $\ubU_x$ and $\bPhi_\alpha:= \Xi_{\lambda_\alpha} \ubU_\mu -\Xi_{\mu}\ubU_{\lambda_\alpha}$ are all in the $\Xi$-periodic kernel of $\Lin$, while
$$\bPsi_\alpha:= \{\Xi, \langle \ubU\rangle\}_{{\lambda_\alpha},c} \,\ubU_\mu \,+\,\{\Xi, \langle \ubU\rangle\}_{c,\mu} \,\ubU_{\lambda_\alpha}\,+\,\{\Xi, \langle \ubU\rangle\}_{\mu,{\lambda_\alpha}} \,\ubU_c$$
is also $\Xi$-periodic, and such that 
$$\Lin \bPsi_\alpha = - \{\Xi, \langle \ubU\rangle\}_{\mu,{\lambda_\alpha}}\, \ubU_x \in \ker \Lin\,.$$
Here above we have used the same convenient notation as in \cite{BronskiJohnsonKapitula} 
$$\{f,g\}_{a,b}= f_a g_g-f_b g_a\,,$$
and the brackets $\langle \cdot \rangle$ stand 
for mean values on a period. The periodicity of the linear combinations mentioned above follows from the identity
$$\ubU_a(\Xi)-\ubU_a(0)=\Xi_a\,\ubU_x(0)\,,$$
which holds true whatever the parameter $a$.
Note that we find in this way at most $N+2$ independent elements of the generalized kernel of $\Lin$ in the space of $\Xi$-periodic functions if $N$ is the dimension of the $\bU$-space,
even though we have used $N+3$ candidates.

\paragraph{Modulated equations}
Following Whitham's `two-timing method' \cite{Whitham70}, we search for solutions of \eqref{eq:absHam} having an asymptotic expansion of the form
$$\bU(t,x)=\bU^0(\varepsilon t, \varepsilon x, \phi(\varepsilon t, \varepsilon x)/\varepsilon)\,+\,
\varepsilon\,\bU^1(\varepsilon t, \varepsilon x, \phi(\varepsilon t, \varepsilon x)/\varepsilon,\varepsilon)\,+\,o(\varepsilon)\,,$$
where $\bU^0$ and $\bU^1$ are $1$-periodic in their third variable (subsequently denoted by $\theta$).
Denoting by $X=\varepsilon x$, $T=\varepsilon t$ the rescaled space variable and time respectively, 
$$k:=\phi_X\,,\;\omega:=\phi_T\,,\;c:=-\omega/k\,,$$
the very existence of a twice differentiable phase $\phi$ requires that 
\begin{equation}
\label{eq:Whithamcomp}
\partial_T k + \partial_X(ck)=0\,.
\end{equation}
Plugging the asymptotic expansion in \eqref{eq:absHam} and using that 
$\partial_t = \varepsilon \partial_T + \omega \partial_\theta$, 
$\partial_x= \varepsilon \partial_X+k \partial_\theta$, 
 we formally receive the following equations
\begin{equation}
\label{eq:Whitham0} \partial_\theta(\bG^0 \,+ \,c \,\bV^0)=0\,,
\end{equation}
\begin{equation}
\label{eq:Whitham1}
\partial_T \bU^0= \bJ \partial_X \bG^0\,+\,\bJ \,k \partial_\theta(\bG^1\,+\,c\,\bV^1)
\end{equation}
where the components $G^{0,1}_\alpha$, $V^{0,1}_\alpha$ of $\bG^{0,1}$ and $\bV^{0,1}$ are given by
$$ G^0_\alpha: = \frac{\partial \Ham}{\partial U_\alpha} \,-\,k\Dif_\theta\left(\frac{\partial \Ham}{\partial U_{\alpha,x}}\right)\,,$$
$$ V^0_\alpha: = \frac{\partial \Impulse}{\partial U_\alpha}\,,\quad \mbox{or equivalently } \bV^0= \bJ^{-1} \bU^0\,,$$
$$ G^1_\alpha: =\begin{array}[t]{l} \displaystyle\frac{\partial^2 \Ham}{\partial U_\alpha \partial U_\beta}\, U^1_\beta \,+\,
\frac{\partial^2 \Ham}{\partial U_\alpha \partial U_{\beta,x}} \, k\partial_\theta U^1_{\beta}
\,-\,k\Dif_\theta\left(\frac{\partial^2 \Ham}{\partial U_{\alpha,x}\partial U_\beta}\, U^1_\beta\,+\,\frac{\partial^2 \Ham}{\partial U_{\alpha,x}\partial U_{\beta,x}}\, k\partial_\theta U^1_{\beta}\right)\\ [15pt]
\displaystyle+\,\frac{\partial^2 \Ham}{\partial U_\alpha \partial U_{\beta,x}} \, \partial_X U^0_{\beta}\,-\,
\Dif_X\left(\frac{\partial \Ham}{\partial U_{\alpha,x}}\right)\,-\,k\Dif_\theta \left(\frac{\partial^2 \Ham}{\partial U_{\alpha,x} \partial U_{\beta,x}} \, \partial_X U^0_{\beta}\right)\,,
\end{array}$$
$$ V^1_\alpha: = \frac{\partial^2 \Impulse}{\partial U_\alpha U_\beta}\,U^1_\beta\,,\quad \mbox{or equivalently } \bV^1= \bJ^{-1} \bU^1\,,$$
all derivatives of $\Ham$ being evaluated at $(\bU^0,k\partial_\theta \bU^0)$, and those of $\Impulse$ at $\bU^0$.
The zeroth order equation in \eqref{eq:Whitham0} yields $(\bG^0 \,+ \,c \,\bV^0)=$~constant, which just amounts to the traveling wave profile equation \eqref{eq:absHamprof}. More precisely, $(\bG^0 \,+ \,c \,\bV^0)=\blambda$ requires that, at fixed $(T,X)$, $\ubU(x):=\bU^0(T,X,kx)$ solves \eqref{eq:absHamprof}.
The first order equation in \eqref{eq:Whitham1} yields the averaged equation over $\theta\in [0,1]$
\begin{equation}
\label{eq:Whitham1av}
\partial_T \langle\bU^0\rangle= \bJ \partial_X \langle\bG^0\rangle\,.
\end{equation}
Together with the compatibility equation \eqref{eq:Whithamcomp}, this is the main set of modulated equations. There is an additional one associated with translation invariance of the original system.
The fastest way to obtain it is to use 
the conservation law in \eqref{eq:impulsecl}, which yields
\begin{equation}
\label{eq:Whitham1bav}
\partial_T \langle Q^0\rangle=  \partial_X \langle S^0\rangle\,,
\end{equation}
where 
$$Q^0:=\tfrac{1}{2}\,\bV^0\cdot \bU^0\,,\;
S^0\,:=\,\bU^0 \cdot \bG^0\,- \Ham(\bU^0,k\partial_\theta \bU^0)\,+\,\frac{\partial \Ham}{\partial  U_{\alpha,x}}\,k\partial_\theta 
 U^0_{\alpha}\,,$$
the derivative of $\Ham$ being again evaluated at $(\bU^0,k\partial_\theta \bU^0)$. 
The other way is the main reason why we have written down $\bG^1$ in details. As a matter of fact,
Equation \eqref{eq:Whitham1bav} can be obtained by averaging the inner product of
\eqref{eq:Whitham1} with $\bV^0$. 
The first term in this operation is simply
$$\langle \bV^0\cdot \partial_T\bU^0\rangle\,=\,\partial_T \langle Q^0\rangle\,.$$
On the right-hand side of $\langle\bV^0\cdot\eqref{eq:Whitham1bav}\rangle$, we first have
$$\langle \bV^0\cdot \bJ \partial_X\bG^0\rangle\,=\,\langle \bU^0\cdot  \partial_X\bG^0\rangle\,.
$$
In order to deal with the other term, we can split $\bG^1+c\bV^1$ into
$$\bG^1+c\bV^1= \Linvar(k\partial_\theta)\,\bU^1\,+\,\Op(k\partial_\theta)\,\partial_X \bU^0$$
where $\Linvar(k\partial_\theta)$ and $\Op(k\partial_\theta)$ are both linear differential operators with coefficients that are functions of 
$(\bU^0,k\partial_\theta \bU^0)$, and moreover Equation \eqref{eq:Whitham0} means that
$$\Linvar(k\partial_\theta)\,\partial_\theta \bU^0\,=\,0\,.$$
This implies that 
$$\langle \bV^0\cdot \bJ k \partial_\theta(\Linvar(k\partial_\theta)\,\bU^1)\rangle\,=\,-\,\langle \bU^1\cdot \Linvar(k\partial_\theta)\,k\partial_\theta \bU^0 \rangle\,=\,0\,,$$
hence
$$\langle \bV^0\cdot \bJ k \partial_\theta(\bG^1+c\bV^1)\rangle\,=\,
-\,\langle k\partial_\theta \bU^0\cdot \Op(k\partial_\theta)\,\partial_X \bU^0\rangle\,.$$
Therefore, it remains to check that 
$$\langle \bU^0\cdot  \partial_X\bG^0\rangle\,-\,\langle k\partial_\theta \bU^0\cdot \Op(k\partial_\theta)\,\partial_X \bU^0\rangle\,=\,
\partial_X \langle S^0\rangle\,,$$
or equivalently,
$$\langle  \bG^0\cdot \partial_X\bU^0 \rangle\,+\,\partial_X\langle k\partial_\theta \bU^0\cdot \nabla_{\bU_{x}} \Ham \,-\,\Ham\rangle
\,+\,
\langle k\partial_\theta \bU^0\cdot \Op(k\partial_\theta)\,\partial_X \bU^0\rangle\,=\,0\,.$$
Now, recalling the definition of $\bG^0$ and making an integration by parts we get
$$\langle  \bG^0\cdot \partial_X\bU^0 \rangle\,=\,
\langle(\partial_X \bU^0)\cdot \nabla_{\bU} \Ham\,+\,(\partial_\theta\partial_X \bU^0)\cdot \nabla_{\bU_{x}} \Ham\rangle\,=\,
\partial_X\langle \Ham \rangle\,-\, \langle(\partial_Xk) (\partial_\theta\bU^0)\cdot  \nabla_{\bU_{x}} \Ham\rangle$$
$$=\partial_X\langle \Ham\,-\,k\partial_\theta \bU^0\cdot \nabla_{\bU_{x}} \Ham\rangle\,+\,
k\,\partial_X\langle (\partial_\theta \bU^0)\cdot \nabla_{\bU_{x}} \Ham\rangle\,.$$
Finally, recalling the definition of $\Op$ and making an integration by parts, we have
$$\langle \partial_\theta \bU^0\cdot \Op(k\partial_\theta)\,\partial_X \bU^0\rangle\,=\,$$
$$\left\langle (\partial_\theta U^0_\alpha)\,\frac{\partial^2 \Ham}{\partial U_\alpha \partial U_{\beta,x}} \, \partial_X U^0_{\beta}\,-\,
(\partial_\theta U^0_\alpha)\,\Dif_X\left(\frac{\partial \Ham}{\partial U_{\alpha,x}}\right)\,+\,k(\partial_\theta^2 U^0_\alpha)\,\frac{\partial^2 \Ham}{\partial U_{\alpha,x} \partial U_{\beta,x}} \, \partial_X U^0_{\beta}\right\rangle$$
$$=\,\left\langle \Dif_\theta\left(\frac{\partial \Ham}{\partial U_{\beta,x}} \, \right)\,\partial_X U^0_{\beta}\,-\,
(\partial_\theta U^0_\alpha)\,\Dif_X\left(\frac{\partial \Ham}{\partial U_{\alpha,x}}\right)\right\rangle
$$
$$
=-\,\partial_X\langle (\partial_\theta \bU^0)\cdot \nabla_{\bU_{x}} \Ham\rangle\,
$$
thanks to another integration by parts.

\paragraph{Low frequency analysis}
We assume that $\ubU$ is a given periodic traveling wave profile of period $\Xi=1/\uk$, and that the set of nearby periodic traveling wave profiles is a $N+2$ dimensional manifold if $N$ the dimension of the $\bU$-space. As explained above, natural parameters for this manifold are the speed $c$ of waves, the constants of integration $\lambda_\alpha$ for $\alpha\in \{1,\ldots,N\}$ as well as $\mu$ showing up in the profile equations \eqref{eq:absHamprof}-\eqref{eq:absHamprofint}.
In fact, we shall prefer a parametrization that is more natural in connection with Whitham's modulated equations,
and assume that the manifold of periodic traveling wave profiles is parametrized by their wave number $k$ (inverse of period),
and the mean values 
\begin{equation}
\label{eq:defmean}
\bM=\langle \bU\rangle\,,\quad P=\langle \Impulse (\bU)\rangle\,.
\end{equation}
For a discussion of this assumption, see 
Appendix \ref{Whitham_param}.
In addition, we rescale all periodic profiles into $1$-periodic ones, in such a way that $k$ appears explicitly in the profile equations. For simplicity we still denote by $\ubU$ the profile now viewed as a function of $\theta=\uk x$, and similarly 
any nearby profile $\bU$ is viewed as a function of  $\theta=k x$. The latter must therefore satisfy
\begin{equation}
\label{eq:absHamprofk}
\partial_\theta\left(\frac{\partial \Ham}{\partial U_\alpha}(\bU,k\partial_\theta\bU)-k\partial_\theta\left(\frac{\partial \Ham}{\partial U_{\alpha,x}}(\bU,k\partial_\theta\bU)\right)+c\,\frac{\partial \Impulse}{\partial U_\alpha}(\bU)\right)=0\,,\: \alpha\in\{1,\ldots,N\}\,.
\end{equation}
Our previous assumption means that for all $(k,\bM,P)$ close to $(\uk,\ubM,\uP)$ there is a unique speed 
$c=c(k,\bM,P)$ and a unique profile up to translations
$\bU=\bU(\theta;k,\bM,P)$ that is $1$-periodic, close\footnote{We say that two $1$-periodic functions are close to each other if their distance with respect to the sup norm up to translations is small.} to $\ubU$, and solution to
\eqref{eq:defmean}-\eqref{eq:absHamprofk}. 
For simplicity again, we just denote by  $\Linvar$ the differential operator $\Linvar(k\partial_\theta)$ considered in the previous paragraph with $\bU^0$ replaced by $\bU$ (which amounts to $\Hess (\Ham + c \Impulse)[\bU]$ where $\partial_x$ is replaced by $k\partial_\theta$), and by $\Lin$ the operator
$$\Lin(k\partial_\theta)= k\partial_\theta \bJ \Linvar(k\partial_\theta)\,.$$
More explicitly, we have
$$(\Linvar \bV)_\alpha\,=\,\frac{\partial^2 \Ham}{\partial U_\alpha \partial U_{\beta}}\,V_\beta \,+\,
\frac{\partial^2 \Ham}{\partial U_\alpha \partial U_{\beta,x}} \, k\partial_\theta V_{\beta}
\,-\,k\Dif_\theta\left(\frac{\partial^2 \Ham}{\partial U_{\alpha,x}\partial U_\beta}\,V_\beta\,+\,\frac{\partial^2 \Ham}{\partial U_{\alpha,x}\partial U_{\beta,x}}\, k\partial_\theta V_{\beta}\right).$$
Equation \eqref{eq:absHamprofk} equivalently reads 
$\Linvar \partial_\theta\bU=0$. 
Besides, this equation implies that $$\partial_\theta(\Linvar \bU_a\,+\,c_a\,\bJ^{-1}\bU) = 0\,,\;\mbox{hence} \quad
\Lin \bU_a\,=\,-\,c_a\,k\partial_\theta \bU\,,$$
where the subscript $a$ stands for a partial derivative  with respect to any parameter among 
$M_\alpha$ and $P$. According to our assumption on the parametrization of periodic profiles, this makes at least $N+1$ independent elements of the generalized kernel of $\Lin$ in the space of $1$-periodic functions, and in fact $N+2$ counting $\partial_\theta \bU$.  
Let us mention straightforward identities for the formal adjoint 
$$\Lin^*=-\Linvar \bJ k \partial_\theta\,.$$
We clearly have indeed
$$\Lin^*\ee_{\alpha}=0\,,\quad \Lin^* \bJ^{-1} \bU=0\,.$$
(Recall that $\ee_{\alpha}$ is just a constant vector in the $\bU$-space.)
Now, we are not only interested in the spectrum of $\Lin$ in the space of $1$-periodic functions but in the whole space of bounded functions. This is why
we introduce the Bloch operators
$$\Lin^\nu:=\Lin(k(\partial_\theta+i\nu))\,,\;\nu\in\R/2\pi\Z\,,$$
of which the spectra in the space of $1$-periodic functions give the one of $\Lin$ on $L^\infty$:
$$\sigma(\Lin)\,=\,\bigcup_{\nu\in\R/2\pi\Z}\sigma (\Lin^\nu)\,.$$

\subsection{Modulational stability \emph{vs} spectral stability}\label{ss:modspec}

Given the material introduced in \S \ref{ss:mat}, we can show the following.
\begin{theorem}\label{thm:absstab} Let us assume that $\ubU$ is the profile of a periodic traveling wave solution to
\eqref{eq:absHam} of period $1/\uk$ and speed $\uc$, and that the set of nearby periodic traveling wave profiles $\bU$ of speed $c$ close to $\uc$ is a $N+2$ dimensional manifold parametrized by $(k,\bM=\langle \bU \rangle,P=\langle Q \rangle)$, where $1/k$ is their period and
$$Q:=\tfrac{1}{2}\,\bU \cdot \bJ^{-1}\bU\,.$$
Let us consider the modulated system
\begin{equation}
\label{eq:Whithamsyst}
\left\{\begin{array}{l}\partial_T k + \partial_X(ck)=0\,,\\ [5pt]
\displaystyle
\partial_T \langle \bU \rangle\,=\,\bJ\,\partial_X\langle \bG\rangle\,,\\ [5pt]
\partial_T \langle Q \rangle\,=\,\partial_X\langle S \rangle\,,
\end{array}\right.
\end{equation}
where
$$G_\alpha:=\frac{\partial \Ham}{\partial U_\alpha}(\bU,k\partial_\theta\bU) \,-\,k\partial_\theta\left(\frac{\partial \Ham}{\partial U_{\alpha,x}}(\bU,k\partial_\theta\bU)\right)\,,$$
$$S\,:=\,\bU \cdot \bG\,- \Ham(\bU,k\partial_\theta \bU)\,+\,(k\partial_\theta 
 U_{\alpha})\,\frac{\partial \Ham}{\partial  U_{\alpha,x}}(\bU,k\partial_\theta \bU)\,.$$
 We also assume that the generalized kernel of $\Lin$ in the space of $1$-periodic functions is of dimension $N+2$.
Then a necessary condition for $\ubU$ to be stable is that the system \eqref{eq:Whithamsyst} be `weakly hyperbolic' at 
$(\uk,\ubM,\uP)$, in the sense that all its characteristic speeds must be real.
\end{theorem}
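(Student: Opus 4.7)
The plan is a low-frequency analysis of the Bloch family $\{\Lin^\nu\}_\nu$ around $\nu=0$, matching the resulting reduced matrix to the characteristic matrix of the Whitham system \eqref{eq:Whithamsyst} linearized at $(\uk,\ubM,\uP)$. Spectral stability of $\ubU$ amounts to $\sigma(\Lin^\nu)\subset i\R$ for every $\nu\in\R/2\pi\Z$. Since $\nu\mapsto\Lin^\nu$ is analytic and $\Lin=\Lin^0$ has, by assumption, a generalized kernel of dimension exactly $N+2$ isolated from the rest of the spectrum, Kato analytic perturbation theory yields a holomorphic family of total projectors $\Pi(\nu)$ of rank $N+2$ onto the spectral subspace clustering at $0$. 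The part of $\sigma(\Lin^\nu)$ close to the origin is then the spectrum of an analytic matrix-valued function $\mathcal{M}(\nu)$ of size $(N+2)\times(N+2)$, with $\mathcal{M}(0)$ nilpotent; after rescaling the basis along the Jordan chain attached to $k\partial_\theta\bU$, one obtains
$$\mathcal{M}(\nu)\,=\,i\nu\,\mathcal{A}\,+\,O(\nu^2)$$
for some $(N+2)\times(N+2)$ matrix $\mathcal{A}$.

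The crux of the proof is the explicit identification of $\mathcal{A}$ with the Jacobian of the Whitham fluxes. As basis of the generalized kernel I would take the genuinely $1$-periodic fields $k\partial_\theta\bU$ and $\bU_{M_1},\ldots,\bU_{M_N},\bU_P$, which satisfy $\Lin\,k\partial_\theta\bU=0$ together with $\Lin\bU_a=-c_a\,k\partial_\theta\bU$ for $a\in\{M_1,\ldots,M_N,P\}$, and which by hypothesis exhaust the generalized kernel. A dual basis is provided by elements of $\ker\Lin^*$, namely the constants $\ee_1,\ldots,\ee_N$ and $\bJ^{-1}\bU$, completed by an additional dual vector matched to the $k\partial_\theta\bU$ slot, which via the compatibility equation $\partial_T k+\partial_X(ck)=0$ corresponds to slow modulations of the wavenumber. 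Computing the matrix of the $\nu$-linear coefficient of $\Lin^\nu$~---~obtained by replacing $\partial_\theta$ with $\partial_\theta+i\nu$~---~on these bases, and using the profile identity $\Linvar k\partial_\theta\bU=0$ together with repeated integrations by parts, the entries assemble into the partial derivatives of the averaged quantities $\langle\bU\rangle=\bM$, $\langle Q\rangle=P$, $\langle\bG\rangle$, $\langle S\rangle$ and of $ck$ with respect to $(k,\bM,P)$ at $(\uk,\ubM,\uP)$~---~that is, precisely the Jacobian of the Whitham fluxes.

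With that identification, the theorem follows in one line: spectral stability forces the eigenvalues of $\mathcal{M}(\nu)$ to lie on $i\R$ for every small real $\nu$, and the expansion $\lambda_j(\nu)=i\nu\alpha_j+O(\nu^2)$ with $\alpha_j\in\mathrm{spec}(\mathcal{A})$ forces each $\alpha_j\in\R$. Since the $\alpha_j$ are by construction the characteristic speeds of \eqref{eq:Whithamsyst} at $(\uk,\ubM,\uP)$, we obtain the weak hyperbolicity claimed.

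The main obstacle is precisely the algebraic identification of $\mathcal{A}$ with the Whitham Jacobian. The parameter derivatives $\bU_{M_\alpha}$ and $\bU_P$ are defined only implicitly through the profile equation \eqref{eq:absHamprofk} and the constraints \eqref{eq:defmean}, so one must iterate the profile identity, the adjoint identities $\Lin^*\ee_\alpha=0$ and $\Lin^*\bJ^{-1}\bU=0$, and several integrations by parts to convert the abstract pairings into derivatives of $\langle\bG\rangle$ and $\langle S\rangle$. A further subtlety is that in the $(k,\bM,P)$ parametrization the wavenumber $k$ enters \eqref{eq:absHamprofk} explicitly, so $\bU_k$ is \emph{not} a kernel element; its place in $\mathcal{A}$ is occupied by the Jordan-top vector $k\partial_\theta\bU$, which encodes the slow modulation of the phase from Whitham's two-timing ansatz, and the consistency of this substitution is exactly what the compatibility equation $\partial_T k+\partial_X(ck)=0$ expresses.
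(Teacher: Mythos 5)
Your overall strategy coincides with the paper's: Bloch operators $\Lin^\nu$, Kato's analytic spectral projector of rank $N+2$ near $\nu=0$, reduction to an $(N+2)\times(N+2)$ matrix, identification of its rescaled low-frequency limit with the quasilinear matrix of \eqref{eq:Whithamsyst}, and the concluding observation that a non-real characteristic speed makes an eigenvalue of $\Lin^\nu$ leave the imaginary axis for small $\nu\neq0$. That last step, and the functional-analytic framing, are sound (modulo the minor caveat that, because the reduced matrix at $\nu=0$ has a nontrivial nilpotent part, individual eigenvalues need not expand as $i\nu\alpha_j+O(\nu^2)$; what one gets, and what suffices, is convergence of the non-uniformly rescaled matrix, so that $\sigma(D^\nu)=ik\nu\,(\sigma(\widetilde D^{(0)})+o(1))$).

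The genuine gap is the identification of $\mathcal{A}$ with the Whitham Jacobian, which you yourself flag as ``the main obstacle'': it is the substance of the theorem, and the route you sketch for it would fail as stated. Computing only the $\nu$-linear coefficient $\Linu$ of $\Lin^\nu$ against the frozen bases $(\partial_\theta\bU,\bU_{M_\alpha},\bU_P)$ and $(\ee_\alpha,\bJ^{-1}\bU,\dots)$ cannot produce the $k$-column of the Jacobian: differentiating \eqref{eq:absHamprofk} in $k$ gives $\Linz\bU_k+\Linu\partial_\theta\bU=-c_k\,k\,\partial_\theta\bU$, and since $(\Linz)^*\ee_\alpha=0$ and $(\Linz)^*\bJ^{-1}\bU=0$ this forces $\langle\bPsi_\alpha^0\cdot\Linu\,\partial_\theta\bU\rangle=0$ for $\alpha\geq1$; so at first order the column attached to the Jordan-top vector is trivial below the first row, and $\partial_k\langle\bJ\bG\rangle$, $\partial_k\langle S\rangle$ never appear. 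In the paper these entries emerge only at order $\nu^2$, through two devices your sketch omits: the Kato-continued vector $\bPhi_0^\nu$ is corrected by kernel elements so that $\partial_\nu\bPhi_0^\nu|_{\nu=0}=ik\,\bU_k$ (this, not a substitution of $k\partial_\theta\bU$ ``in place of'' $\bU_k$, is how the non-kernel direction $\bU_k$ enters), and the rescaling is the non-uniform one in which the pairings $\langle\bPsi_\alpha^\nu\cdot\Lin^\nu\bPhi_0^\nu\rangle$ are divided by $(ik\nu)^2$, yielding in the limit $\langle(\Linu\bU_k+\Lind\partial_\theta\bU)_\alpha\rangle$ and $\langle\bJ^{-1}\bU\cdot(\Linu\bU_k+\Lind\partial_\theta\bU)\rangle$. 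One must then still verify, by a further chain of integrations by parts using $\Linvarz=\Hess\Hamk+c\,\bJ^{-1}$, that these quantities equal the $k$-, $\bM$- and $P$-derivatives of the averaged fluxes, so that the limit matrix is exactly the Whitham matrix shifted by $c\,{\bf I}_{N+2}$. Without these ingredients the reduced matrix you would compute is not the Whitham Jacobian, so the proposal, while pointed in the right direction, does not yet prove the theorem.
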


\begin{proof}
It is based on a perturbation calculation, which relates the matrix of \eqref{eq:Whithamsyst} at 
$(k,\bM,P)$ to the one of $\Lin^\nu$ restricted by spectral projection to a $(N+2)$ dimensional invariant subspace.
We first introduce the expansion
$$\Lin^\nu\,=\,\Linz\,+\,i\,k\nu\,\Linu\,-\,k^2\nu^2\,\Lind\,-\,i\,k^3\nu^3\,\Lint\,,$$
where
$\Linz=\Lin^0=\bJ k\partial_\theta \Linvarz$ 
is just $\Lin$ viewed as an operator acting on $1$-periodic functions,
as well as $\Linvarz$ is just $\Linvar$ acting on $1$-periodic functions,
$$\Linu:=\bJ (\Linvarz + k \partial_\theta \Linvaru)\,,\;\Lind:=\bJ (\Linvaru + k \partial_\theta \Linvard)\,,\;
\Lint:=\bJ \Linvard\,,$$
$$\begin{array}{l} \displaystyle( \Linvaru \bV)_\alpha:= \frac{\partial^2 \Ham}{\partial U_\alpha \partial U_{\beta,x}}\,V_\beta \,-\,
\frac{\partial^2 \Ham}{\partial U_{\alpha,x} \partial U_{\beta}} \,V_{\beta}
\,-\,k\Dif_\theta\left(\frac{\partial^2 \Ham}{\partial U_{\alpha,x}\partial U_{\beta,x}}\,V_\beta\right)\,-\,\frac{\partial^2 \Ham}{\partial U_{\alpha,x}\partial U_{\beta,x}}\, k\partial_\theta V_{\beta}\,,\\ [15pt]
\displaystyle ( \Linvard \bV)_\alpha:=-\,\frac{\partial^2 \Ham}{\partial U_{\alpha,x}\partial U_{\beta,x}}\, V_{\beta}\,.\end{array}$$
Differentiating \eqref{eq:absHamprofk} with respect to $k$ we find that
\begin{equation}\label{eq:Uk}
\Linz\bU_{k} \,+\,\Linu \partial_\theta \bU \,=\,-\,c_{k}\,k\partial_\theta \bU\,.
\end{equation}
Indeed, we find at once that for all $\alpha\in\{1,\ldots,N\}$,
$$\partial_\theta\left((\Linvarz \bU_{k}\,+\,c_{k}\,\bJ^{-1}\bU)_\alpha +\,
\frac{\partial^2 \Ham}{\partial U_\alpha \partial U_{\beta,x}}\,\partial_\theta U_\beta \,-\,\Dif_\theta\left(\frac{\partial \Ham}{\partial U_{\alpha,x}}\right) \,-\,
k\Dif_\theta\left(\frac{\partial^2 \Ham}{\partial U_{\alpha,x}\partial U_{\beta,x}}\, \partial_\alpha U_{\beta}\right)\right) = 0\,,
$$
hence
$$\Linz\bU_{k}\,+\,c_{k}\,k\partial_\theta \bU\,+\, \bJ k\partial_\theta \Linvaru\partial_\theta\bU\,=\,0\,,$$
which is equivalent to \eqref{eq:Uk} since $\Linvar\partial_\theta\bU=0$.

As already mentioned,
$$\bPhi_0^0:=\partial_\theta \bU\,,\;\bPhi_\alpha^0:=\bU_{M_\alpha}\,,\;\bPhi_{N+1}^0:=\bU_P$$
all belong to~--~and span~--~ the generalized kernel of $\Lin$, while 
$$\bPsi_\alpha^0:=\ee_\alpha\,,
\;\alpha\in \{1,\ldots,N\}\,,
\;\bPsi_{N+1}^0:=\bJ^{-1}\bU$$
belong to~--~and span~--~ the kernel of $\Lin^*$. By definition of the mean values $M_\alpha$ and $P$
in \eqref{eq:defmean},
those functions are such that
$$\langle \bPsi_\alpha^0\cdot \bPhi_\beta^0\rangle=\delta_{\alpha,\beta}\,.$$
Therefore, we can add in a function $\bPsi_0^0$ such that
$(\bPsi_0^0,\bPsi_1^0,\ldots,\bPsi_N^0,\bPsi_{N+1}^0)$ be dual to the basis
$(\bPhi_0^0,\bPhi_1^0,\ldots,\bPhi_N^0,\bPhi_{N+1}^0)$ of the generalized kernel of $\Lin$,  
and span the generalized kernel of $\Lin^*$.
Recall that by our main assumption, $0$ is an isolated eigenvalue of $\Linz$ of algebraic multiplicity equal to $N+2$.
Therefore, 
since our structural assumptions ensure that $\Lin^\nu$ is 
a relatively compact perturbation of $\Linz$ depending analytically on $\nu$
(see Appendix \ref{compact_resolvent}), 
there exist an analytic mapping $\nu \mapsto \Pi(\nu)$ where $\Pi(\nu)$ is a spectral projector for $\Lin^\nu$ 
of finite rank $N+2$, and coincides with  the orthogonal projector onto
$\mbox{\rm span}(\bPhi_0^0,\bPhi_1^0,\ldots,\bPhi_N^0,\bPhi_{N+1}^0)$ at $\nu=0$.  By Kato's perturbation method \cite[pp.~99-100]{Kato}, we thus construct dual bases 
$(\bPhi_0^\nu,\bPhi_1^\nu,\ldots,\bPhi_N^\nu,\bPhi_{N+1}^\nu)$ and 
$(\bPsi_0^\nu,\bPsi_1^\nu,\ldots,\bPsi_N^\nu,\bPsi_{N+1}^\nu)$ of, respectively,
$\range( \Pi(\nu))$ and $\range( \Pi(\nu)^*)$,
which depend analytically on $\nu$ in a real neighborhood of zero. The part of $\Lin^\nu$ on the finite dimensional subspace $\range( \Pi(\nu))$ is determined by the matrix
\begin{equation*}
D^\nu:=\left(\langle \bPsi_\alpha^\nu \cdot \Lin^\nu \bPhi_\beta^\nu \rangle\right)_{0\leq\alpha,\beta\leq N+1}\,.
\end{equation*}
Similarly as $\Lin^\nu$, this matrix has an expansion
$$D^\nu= \Dz\,+\,i\,k\nu\,\Du\,-\,k^2\nu^2\,\Dd
+o(\nu^2)\,.$$
By using that
\begin{equation*}
\Linz \bPhi_0^0\,=\,0\,,\quad\Linz \bPhi_\beta^0\,=\,-\,c_{M_\beta}\,k\,\bPhi_0^0\,,\;1\leq \beta\leq N \,,\quad
\Linz \bPhi_{N+1}^0\,=\,-\,c_{P}\,k\,\bPhi_0^0\,,
\end{equation*}
\begin{equation*}
(\Linz)^* \bPsi_\alpha^0 = 0\,,1\leq \alpha\leq N \,,\quad (\Linz)^* \bPsi_{N+1}^0=0\,,
\end{equation*}
and
$\langle \bPsi_0^0, \bPhi_0^0\rangle =1$, we get that
$$\Dz=\left(\begin{array}{c|cccc}
0 & -kc_{M_1} & \ldots & -kc_{M_N} & -kc_P\\ \hline
\vdots & & 0 & & \\
0 & && &\end{array}\right)\,.$$
Using in addition Eq.~\eqref{eq:Uk}, which equivalently reads
\begin{equation}
\label{eq:Ukb}
\Linz\bU_{k} \,+\,\Linu \bPhi_0^0 \,=\,-\,c_{k}\,k\,\bPhi_0^0\,,
\end{equation}
we see that for all $\alpha\in\{1,\ldots,N\}$,
$$\langle\bPsi_{\alpha}^0\cdot \Linu \bPhi_0^0\rangle=0\,,$$
hence 
$$\Du=\left(\begin{array}{c|ccc}
* & * & \ldots & *\\ \hline
0 & & & \\
\vdots & & * & \\
0 & & &\end{array}\right)\,.$$
(Here above, we have also used that
$\partial_\nu \bPsi_\alpha^\nu \cdot \Linz \bPhi_0^0=0$ for all $\alpha$, and 
$\bPsi_\alpha^0 \cdot \Linz \partial_\nu \bPhi^\nu=0$ for all $\alpha\geq 1$.)
Moreover, we claim that the upper-left entry of $\Du$ is 
$$\langle \bPsi_0^0 \cdot \Linu \bPhi_0^0\rangle\,=\,
-\,k\,c_{k}\,.$$
Indeed, this equality comes from \eqref{eq:Ukb}, and the only other nontrivial term in the upper-left entry of $\Du$ is
$$\frac{1}{ik}\,\langle \bPsi_0^0 \cdot \Linz \partial_\nu(\bPhi_0^\nu)_{|\nu=0}\rangle\,.$$
Since $(\Linz)^*(\bPsi_0^0)$ belongs to $\mbox{\rm span}(\bPsi_1^0,\ldots,\bPsi_N^0,\bPsi_{N+1}^0)$,
that term will cancel out provided that for all $\alpha\in\{1,\ldots,N+1\}$,
\begin{equation}\label{eq:cancel}
\langle \bPsi_\alpha^0 \cdot \partial_\nu(\bPhi_0^\nu)_{|\nu=0}\rangle\,=\,0\,.
\end{equation}
This we can arrange, up to a harmless modification of $\Phi^\nu_0$. Let us explain how.
Using that $\Lin^\nu\Phi_0^0=0$, we see by expanding
$$\Pi(\nu) \,\Lin^\nu \Phi_0^\nu = \Lin^\nu \Phi_0^\nu$$
that
$$\Pi(0)\,(\Linu\,\Phi_0^0\,+\,\frac{1}{ik}\;\Linz\, \partial_\nu(\bPhi_0^\nu)_{|\nu=0})\,=\,
\Linu\,\Phi_0^0\,+\,\frac{1}{ik}\;\Linz\, \partial_\nu(\bPhi_0^\nu)_{|\nu=0}\,,$$
or, using again \eqref{eq:Ukb} and that $\Phi_0^0\in \range(\Pi(0))$,
$$\Pi(0)\,(-\Linz\bU_{k}\,+\,\frac{1}{ik}\;\Linz\, \partial_\nu(\bPhi_0^\nu)_{|\nu=0})\,=\,
-\Linz\bU_{k}\,+\,\frac{1}{ik}\;\Linz\, \partial_\nu(\bPhi_0^\nu)_{|\nu=0}\,.$$
This shows that $$\Linz\,( \partial_\nu(\bPhi_0^\nu)_{|\nu=0}\,-\,ik\,\bU_{k})\in \range(\Pi(0))\,=\,\ker ((\Linz)^2)\,=\,\ker ((\Linz)^3)\,,$$
hence
$$ \partial_\nu(\bPhi_0^\nu)_{|\nu=0}\,-\,ik\,\bU_{k}\,\in \range(\Pi(0))\,.$$
This means that there exist numbers $z_\alpha$ such that
$$\partial_\nu(\bPhi_0^\nu)_{|\nu=0}\,-\,ik\,\bU_{k}\,=\,z_\alpha\,\Phi_\alpha^0\,.$$
If we substitute 
$$\widetilde{\bPhi}_0^\nu:={\bPhi}_0^\nu\,-\,\nu\,z_\alpha\,\Phi_\alpha^\nu$$
for ${\bPhi}_0^\nu$,
we thus have that 
$$ \partial_\nu(\widetilde{\bPhi}_0^\nu)_{|\nu=0}\,=\,ik\,\bU_{k}\,.$$
(In order to keep duality we also substitute
$$\widetilde{\Psi}_\alpha^\nu= \Psi_\alpha^\nu +\nu z_\alpha \Psi_0^\nu$$
for  $\Psi_\alpha^\nu$ with $\alpha\in\{1,\ldots,N+1\}$.)
Forgetting the tilda, this implies \eqref{eq:cancel} for all $\alpha\in\{1,\ldots,N+1\}$ because
$$\langle \bPsi_\alpha^0 \cdot \bU_{k}\rangle\,=\,0\,.$$
As to the other diagonal block in $\Du$, it reduces to 
$$(\langle \bPsi_\alpha^0 \cdot \Linu \bPhi_\beta^0\rangle)_{1\leq \alpha,\beta\leq N+1}$$
because $(\Linz)^*\Psi_\alpha^0=0$ for $\alpha\geq 1$.
It remains to compute the first column, starting from second row, in $\Dd$.
At $\alpha$-th row we find
$$\frac{1}{ik}\;\langle \Psi_\alpha^0 \cdot \Linu\,\partial_\nu(\bPhi_0^\nu)_{|\nu=0}\rangle\,+\,
\frac{1}{ik}\;\langle  \partial_\nu(\bPsi_\alpha^\nu)_{|\nu=0}\cdot (\Linu\,\Phi_0^0\,+\,\frac{1}{ik}\;\Linz\, \partial_\nu(\bPhi_0^\nu)_{|\nu=0})\rangle\,+\,
\langle \Psi_\alpha^0\cdot \Lind\,\bPhi_0^0\rangle
$$
$$\,=\,\langle \Psi_\alpha^0 \cdot \Linu\,\bU_{k}\rangle\,-\,
\frac{c_{k}}{i}\;\langle  \partial_\nu(\bPsi_\alpha^\nu)_{|\nu=0}\cdot \bPhi_0^0\rangle\,+\,
\langle \Psi_\alpha^0\cdot \Lind\,\bPhi_0^0\rangle\,,
$$
where in fact the middle term is zero because of \eqref{eq:cancel} and the fact that
$$\langle  \partial_\nu(\bPsi_\alpha^\nu)_{|\nu=0}\cdot \bPhi_0^0\rangle\,=\,-\,
\langle \bPsi_\alpha^0 \cdot \partial_\nu(\bPhi_0^\nu)_{|\nu=0}\rangle$$
by duality. Collecting together the results of the above computations, we get that
$$\widetilde{D}^\nu:=\left(\begin{array}{c|ccc}
\frac{1}{ik\nu}\langle \bPsi_0^\nu \cdot \Lin^\nu \bPhi_0^\nu \rangle & & \left(\langle \bPsi_0^\nu \cdot \Lin^\nu \bPhi_\beta^\nu \rangle\right)_{1\leq \beta\leq N+1} & \\ \hline
 & & & \\
 \left(\frac{1}{(ik\nu)^2}\langle \bPsi_\alpha^\nu \cdot \Lin^\nu \bPhi_0^\nu \rangle\right)_{1\leq \alpha\leq N+1} & & \left(\frac{1}{ik\nu}\langle \bPsi_\alpha^\nu \cdot \Lin^\nu \bPhi_\beta^\nu \rangle\right)_{1\leq\alpha,\beta\leq N+1} & \\
 & & &\end{array}\right)$$
tends to $\widetilde{D}^{(0)}:=$
$$
\left(\begin{array}{c|c|c}
-\,k\,c_{k} &  -kc_{M_1} \; \ldots \; -kc_{M_N} & -kc_P   \\ \hline
\langle (\Linu\,\bU_{k} + \Lind \partial_\theta \bU)_1\rangle & & \langle ( \Linu \bU_P)_1 \rangle\\
\vdots & (\langle ( \Linu \bU_{M_\beta})_\alpha \rangle)_{1\leq \alpha,\beta\leq N} & \vdots \\
\langle (\Linu\,\bU_{k} + \Lind \partial_\theta \bU)_N\rangle & &  \langle ( \Linu \bU_P)_N \rangle\\ \hline
\langle (\bJ^{-1}\bU) \cdot (\Linu\,\bU_{k} + \Lind \partial_\theta \bU)\rangle & \langle (\bJ^{-1}\bU) \cdot  \Linu \bU_{M_\beta}\rangle)_{1\leq \beta\leq N}& \langle (\bJ^{-1}\bU) \cdot \Linu\,\bU_P\rangle\end{array}\right)$$
when $\nu$ goes to zero. We are now going to check that the matrix of the modulated system \eqref{eq:Whithamsyst} linearized about $(k,\bM=\langle \bU\rangle, P=\langle\tfrac{1}{2}\,\bU \cdot \bJ^{-1}\bU\rangle)$ is nothing but 
$\widetilde{D}^{(0)}\,-\,c\,{\bf I}_{N+2}$.
Since 
$$D^\nu= \frac{1}{ik\nu}\,\Sigma(\nu)^{-1} \widetilde{D}^\nu \Sigma(\nu) $$
with
$$\Sigma(\nu)=\left(\begin{array}{c|ccc}
1 & 0 & \ldots & 0\\ \hline
0 & & & \\
\vdots & & \frac{1}{ik\nu} {\bf I}_{N+1} & \\
0 & & &\end{array}\right)\,,$$
the existence of a non-real eigenvalue of $\widetilde{D}^{(0)}\,-\,c\,{\bf I}_{N+2}$ would imply the existence of 
an eigenvalue of $D^\nu$ bifurcating from zero into the right-half plane.
Before linearizing it, let write \eqref{eq:Whithamsyst} in the simplest abstract form
\begin{equation}
\label{eq:Whithamsystabs}\left\{\begin{array}{l}\partial_T k \,=\,-\,\partial_X(c k)\,,\\ [5pt]
\displaystyle
\partial_T \bM\,=\,\partial_X\langle \bJ \Euler \Hamk [\bU]\rangle \,,\\ [5pt]
\partial_T P\,=\,\partial_X\langle \bU\cdot \Euler \Hamk [\bU] \,+\,\Legendre \Hamk [\bU] \rangle\,,
\end{array}\right.
\end{equation}
where 
$$\Hamk (\bU,\partial_\theta\bU):=\Ham(\bU,k\partial_\theta\bU)\,.$$
In quasilinear form, \eqref{eq:Whithamsystabs} reads
\begin{equation}
\left\{\begin{array}{ll}\partial_T k \,=\,&-\,c\,\partial_Xk \,-\, k\,c_{k}\,\partial_X k\,-\,k\,c_{M_\beta} \,\partial_X M_\beta\,-\,k\,c_P\,\partial_X P\,,\\ [5pt]
\displaystyle
\partial_T \bM\,=\,& \langle \bJ \Euler \Hamk [\bU]\rangle_{k}\,\partial_Xk \,+\, \langle \bJ \Euler \Hamk [\bU]\rangle_{M_\beta}\,\partial_X M_\beta\,+\,\langle \bJ \Euler \Hamk [\bU]\rangle_P\,\partial_X P\,,\\ [5pt]
\partial_T P\,=\,&\langle \bU\cdot \Euler \Hamk [\bU] \,+\,\Legendre \Hamk [\bU]\rangle_{k}\,\partial_Xk \,+\, \langle \bU\cdot \Euler \Hamk [\bU] \,+\,\Legendre \Hamk [\bU]\rangle_{M_\beta}\,\partial_X M_\beta \\ [5pt]
&\,+\,\langle \bU\cdot \Euler \Hamk [\bU]\,+\,\Legendre \Hamk [\bU]\rangle_P\,\partial_X P\,.
\end{array}\right.
\end{equation}
The right-hand side in the first row here above is indeed the first component of 
$$(\widetilde{D}^{(0)}\,-\,c\,{\bf I}_{N+2})\,\partial_X(k,M_1,\ldots,M_N,P)^T\,.$$
It remains to identify the other rows. If $a$ denotes any one of the variables $M_\alpha$ or $P$, 
we have
$$\langle \bJ \Euler \Hamk [\bU]\rangle_a\,=\,
\langle \bJ \Hess \Hamk [\bU] \bU_a\rangle\,.$$
Recalling that 
$$\Linu=\bJ (\Linvarz + k \partial_\theta \Linvaru)$$
and observing that 
$$\Linvarz\,=\,\Hess \Hamk \,+\,c\,\bJ^{-1}\,,$$
we thus find that
$$\langle \bJ \Euler \Hamk [\bU]\rangle_a\,=\,
\langle \Linu \bU_a\rangle\,-\,c\,\langle \bU\rangle_a\,.$$
Besides, we have
$$\langle \bJ \Euler \Hamk [\bU]\rangle_{k}\,=\,\langle \bJ \Hess \Hamk [\bU] \bU_{k}\rangle\,+\,\langle 
\bJ \Linvaru \partial_\theta \bU\rangle\,=\,
\langle \Linu \bU_{k}\rangle \,+\,\langle \Lind \partial_\theta \bU\rangle
$$
where we have also used that 
$$\Lind=\bJ (\Linvaru + k \partial_\theta \Linvard)\,.$$
This shows that the second row in \eqref{eq:Whithamsystabs} equivalently reads in quasilinear form
$$\partial_T \bM\,=\, -\,c\,\partial_X \bM\,+\,\langle \Linu \bU_{k} \,+\, \Lind \partial_\theta \bU\rangle\,\partial_Xk \,+\, \langle \Linu \bU_{M_\beta}\rangle\,\partial_X M_\beta\,+\,\langle \Linu \bU_{P}\rangle\,\partial_X P\,,$$
in which the right-hand side clearly coincides with the $\alpha$-th components, $\alpha\in \{1,\ldots,N\}$, of
$$(\widetilde{D}^{(0)}\,-\,c\,{\bf I}_{N+2})\,\partial_X(k,M_1,\ldots,M_N,P)^T\,.$$
In order to check the last component, we first compute that
$$ \langle \bU\cdot \Euler \Hamk [\bU] \,+\,\Legendre \Hamk [\bU]\rangle_a\begin{array}[t]{l}\,=\,\begin{array}[t]{l}
\langle \bU_a\cdot \Euler \Hamk [\bU] \,+\,\bU \cdot \Hess \Hamk [\bU] \bU_a\rangle \\ [5pt] \,+\,
\displaystyle\left\langle \,\frac{\partial^2 \Ham}{\partial U_{\alpha,x}\partial U_{\beta,x}}\;(k\partial_\theta U_\alpha) \,(k\partial_\theta U_\beta)_a\right\rangle
 \\ [15pt] \,+\,
\displaystyle\left\langle \frac{\partial^2 \Ham}{\partial U_{\alpha,x}\partial U_{\beta}}\;(k\partial_\theta U_\alpha) \,(U_\beta)_a\right\rangle\,-\,\left\langle \frac{\partial \Ham}{\partial U_{\alpha}}\,(U_\alpha)_a\right\rangle
\end{array}\\ [80pt]
= \begin{array}[t]{l}
\langle \bU \cdot (\bJ^{-1}\Linu \bU_a)\rangle \,-\,c\,\langle \bU\cdot \bJ^{-1} \bU_a\rangle\,-\,\langle \bU \cdot k\partial_\theta(\Linvaru \bU_a)\rangle \\ [5pt] \,+\,
\displaystyle\left\langle \,\frac{\partial^2 \Ham}{\partial U_{\alpha,x}\partial U_{\beta,x}}\;(k\partial_\theta U_\alpha) \,(k\partial_\theta U_\beta)_a\right\rangle
 \\ [15pt] \,+\,
\displaystyle\left\langle \frac{\partial^2 \Ham}{\partial U_{\alpha,x}\partial U_{\beta}}\;(k\partial_\theta U_\alpha) \,(U_\beta)_a\right\rangle\,-\,\left\langle 
(U_\alpha)_a\cdot k\Dif_\theta\left(\frac{\partial \Ham}{\partial U_{\alpha,x}}\right)\right\rangle\,,
\end{array}
\end{array}
$$
where again all derivatives of $\Ham$ are evaluated at $(\bU,k\partial_\theta\bU)$, and in fact the last four terms cancel out by definition of $\Linvaru$. Therefore, using the symmetry of $\bJ$, we obtain that
$$ \langle \bU\cdot \Euler \Hamk [\bU] \,+\,\Legendre \Hamk [\bU]\rangle_{M_\beta}\,=\,
\langle (\bJ^{-1}\bU) \cdot \Linu \bU_{M_\beta}\rangle$$
and 
$$ \langle \bU\cdot \Euler \Hamk [\bU] \,+\,\Legendre \Hamk [\bU]\rangle_P\,=\,-\,c\,+\,
\langle (\bJ^{-1}\bU) \cdot \Linu \bU_{P}\rangle\,.$$
Concerning the derivative $ \langle \bU\cdot \Euler \Hamk [\bU] \,+\,\Legendre \Hamk [\bU]\rangle_{k}$, the computation is similar, with a few more terms:
$$ \langle \bU\cdot \Euler \Hamk [\bU] \,+\,\Legendre \Hamk [\bU]\rangle_{k}\begin{array}[t]{l}\,=\,
\langle (\bJ^{-1}\bU) \cdot \Linu \bU_{k}\rangle\,+\,\langle \bU\cdot \Linvaru \partial_\theta \bU\rangle\\ [10pt] \quad+\,
\left\langle \dfrac{\partial^2 \Ham}{\partial U_{\alpha,x}\partial U_{\beta,x}}\;(k\partial_\theta U_\alpha)\,(\partial_\theta U_\beta)
 \right\rangle\\ [15pt]
=\, \langle (\bJ^{-1}\bU) \cdot ( \Linu \bU_{k}\,+\,\Lind \partial_\theta \bU)\rangle \end{array}
$$
after another integration by parts and by definition of $\Lind$ (plus the symmetry of $\bJ$).
\end{proof}

\subsection{Small amplitude wave trains}\label{ss:small}
A natural question is whether at least small amplitude waves are stable. Our main purpose here is to extend to our general, abstract framework, the necessary stability conditions that were exhibited by Whitham \cite[pp.~489-491,512-513]{Whitham}, basically for scalar problems.
Small amplitude wave trains correspond to a near-linear situation. More precisely, solutions to \eqref{eq:absHam} involving only small oscillations (with bounded wavenumber) around a mean value $\ubM$ are expected to be well approximated by solutions to the linearized system
\begin{equation}
\label{eq:absHamzero}
\partial_t\bU = \sJ (\Hess \Ham[\ubM] \bU)\,,
\end{equation}
with
$$(\Hess \Ham[\ubM] \bU)_\alpha=\, \frac{\partial^2 \Ham}{\partial U_\alpha \partial U_\beta} U_\beta \,+\,
\frac{\partial^2 \Ham}{\partial U_\alpha \partial U_{\beta,x}} U_{\beta,x}
\,-\,\Dif_x\left(\frac{\partial^2 \Ham}{\partial U_{\alpha,x}\partial U_\beta} U_\beta\,+\,\frac{\partial^2 \Ham}{\partial U_{\alpha,x}\partial U_{\beta,x}}  U_{\beta,x}\right)\,,$$
where all second order derivatives of $\Ham$ are evaluated at $(\ubM,0)$.

Let us concentrate for a while on \emph{periodic} wave solutions to \eqref{eq:absHam} of small amplitude, that is of the form
$\bU(x,t)=\ubU(kx+\omega t)$ with $\ubU$ being a $1$-periodic function,  $\langle \ubU\rangle = \ubM$, and  $\|\ubU-\ubM\|_{L^\infty}$ small.
If in addition we assume that $\Ham$ is an even function of $\bU_x$ (which is always the case when the contribution of $\bU_x$ to $\Ham$ comes from a kinetic energy), the equations governing $\ubU$  
\begin{equation}
\label{eq:absHamuprofk}
\partial_\theta\left(\frac{\partial \Ham}{\partial U_\alpha}(\ubU,k\partial_\theta\ubU)-k\partial_\theta\left(\frac{\partial \Ham}{\partial U_{\alpha,x}}(\ubU,k\partial_\theta\ubU)\right)+c\,\frac{\partial \Impulse}{\partial U_\alpha}(\ubU)\right)=0\,,\: \alpha\in\{1,\ldots,N\}\,,
\end{equation}
with $c=-\omega/k$, are obviously symmetric under $\theta\mapsto -\theta$. In this case, we may assume without loss of generality that $\ubU$ is an even function of $\theta$.
Then, denoting by $a= 2 \|\langle \ubU \cos (2\pi \cdot)\rangle\|$ an approximate amplitude of the wave,
successive Lyapunov-Schmidt reduction arguments show that $\ubU$ can be expanded  for small values of $a$ as 
\begin{equation}\label{eq:expansionU}
\ubU(k,\ubM,a;\theta)=\ubM\,+\,a \ubU_1(k,\ubM;\theta)\,+\,a^2 \ubU_2(k,\ubM;\theta)\,+\,a^3 \ubU_3(k,\ubM;\theta)\,+\,o(a^3)
\end{equation}
with $\langle  \ubU_m\rangle =0$ for all $m=1,2,3$, $\langle  \ubU_1\cdot {\bf K} \ubU_m  \rangle =0$ for $m=2,3$, 
$\langle  \ubU_2\cdot {\bf K} \ubU_3  \rangle =0$
whatever the $N\times N$ matrix $ {\bf K}$ (in fact, since $\ubU$ is even, $\ubU_1(k,\ubM;\theta)= 2  \cos (2\pi \theta) \langle \ubU_1(k,\ubM;\cdot) \cos(2\pi \cdot) \rangle$, and 
$\ubU_2(k,\ubM;\theta)= 2  \cos (4\pi \theta) \langle \ubU_2(k,\ubM;\cdot) \cos (4\pi \cdot) \rangle$), 
and the frequency $\omega$ can also be expanded as
\begin{equation}\label{eq:expansionom}
\omega(k,\ubM,a)=\omega_0(k,\ubM)+a^2 \omega_2(k,\ubM)+o(a^2),
\end{equation}
where $\omega_0(k,\ubM)$ is the frequency of periodic wave solutions to \eqref{eq:absHamzero}, determined by the (linear) dispersion relation
$$\det (k\uLinvar(2i\pi k)-\omega_0(k,\ubM) \bJ^{-1})\,=\,0\,,$$
$$ (\uLinvar(2i\pi k))_{\alpha\beta}\,:=\,\frac{\partial^2 \Ham}{\partial U_\alpha \partial U_{\beta}}\,+\,i\,(2\pi k)\,
\left(\frac{\partial^2 \Ham}{\partial U_\alpha \partial U_{\beta,x}} \, 
\,-\,\frac{\partial^2 \Ham}{\partial U_{\alpha,x}\partial U_\beta}\right)\,+\,(2\pi k)^2\,\frac{\partial^2 \Ham}{\partial U_{\alpha,x}\partial U_{\beta,x}}\,,$$
all derivatives of $\Ham$ being evaluated at $(\ubM,0)$. Furthermore, $\omega_2(k,\ubM)$ can be expressed in terms of 
mean values involving $\ubU_1$ and $\ubU_2$ as
\begin{equation}\label{eq:omega2}
\omega_2(k,\ubM):=\,- \,k\,\frac{\langle \partial_\theta\ubU_1 \cdot \partial_\theta (\delta^3\!\Hamk [\ubM](\ubU_1,\ubU_2) + \frac{1}{6}\,\delta^4\!\Hamk [\ubM](\ubU_1,\ubU_1,\ubU_1))\rangle }{\langle \partial_\theta\ubU_1 \cdot \bJ^{-1} \partial_\theta \ubU_1\rangle}\,,
\end{equation}
where $\delta^3\!\Hamk$ and $\delta^4\!\Hamk$ denote respectively the third and fourth order variational derivatives of $\Hamk: \bU\mapsto \Ham(\bU,k\partial_\theta\bU)$ (up to this point, we have preferred the notation $\Euler \Ham$ for the first derivative $\delta \Ham$, and $\Hess \Ham$ for the second order one $\delta^2 \Ham$).
The actual derivation of \eqref{eq:omega2} follows the lines of computations made by Whitham in \cite[pp.~472--475]{Whitham}\footnote{We warn the reader that we have taken the opposite sign for $\omega$ compared to the one chosen by Whitham and normalized periods of profiles to one instead of $2\pi$.}, first for KdV and then for water waves in arbitrary depth, except that here we have no explicit formula for $\omega_0$. The reader may check that \eqref{eq:omega2} is consistent with values found by Whitham, namely
$2\pi\,\omega_2(k,\ubM)=-3c_0^2/[32\,(2\pi k)]$ for (gKdV) with $f(v)=c_0(\frac{1}{2}v^2+\frac{1}{4} v^3)$ (see \cite[pp.~463,473]{Whitham}),
and also $2\pi\,\omega_2(k,\ubM)=-3\sigma\,(2\pi k)+24\sigma^2\ubM^2/(2\pi k)$ for (gKdV) with $f(v)=\frac{1}{2}v^2+\sigma v^4$, the zero-mean periodic solutions of which are basically governed by the same equation as periodic solutions to the Klein-Gordon equation considered in \cite[pp.~486--487]{Whitham}. 
\paragraph{Derivation of \eqref{eq:omega2}.}
By plugging \eqref{eq:expansionU} and \eqref{eq:expansionom} in \eqref{eq:absHamuprofk}, we get successively, by increasing order in powers of $a$,
$$\begin{array}{ll} (k\uLinvar(k\partial_\theta)-\omega_0(k,\ubM) \bJ^{-1} )\,\partial_\theta \ubU_1  \,=\,& 0\,,\\ [8pt]
(k\uLinvar(k\partial_\theta)-\omega_0(k,\ubM) \bJ^{-1} )\,\partial_\theta \ubU_2\,=\,&  \partial_\theta\big(\tfrac12\,k\,\delta^3\!\Hamk [\ubM](\ubU_1,\ubU_1)\big)\,,\\ [8pt]
(k\uLinvar(k\partial_\theta)-\omega_0(k,\ubM) \bJ^{-1} )\,\partial_\theta \ubU_3\,=\,& \partial_\theta\big(k\,\delta^3\!\Hamk [\ubM](\ubU_1,\ubU_2)+ \tfrac{1}{6}\,k\,\delta^4\!\Hamk [\ubM](\ubU_1,\ubU_1,\ubU_1))\big) \\ [5pt]
& \,+\, \omega_2(k,\ubM)\,\bJ^{-1}\,\partial_\theta\ubU_1\,.\end{array}$$
By taking the inner product of the last equation with $\partial_\theta \ubU_1$, we see that the left hand side vanishes because of the first equation and of the self-adjointness of the operator $(k\uLinvar(k\partial_\theta)-\omega_0(k,\ubM) \bJ^{-1} )$, hence \eqref{eq:omega2}.
Note that for the very same reason, one can draw information 
on $\omega_{0,k}(k,\ubM)$
from the first equation. Indeed, by differentiating it with respect to $k$ and taking the inner product with $\partial_\theta \ubU_1$, the term involving $\partial_\theta \ubU_{1,k}$ vanishes, and we find that
\begin{equation}\label{eq:c0p}
\omega_{0,k}(k,\ubM)\,=\,\frac{\langle \partial_\theta\ubU_1 \cdot (k\uLinvar(k\partial_\theta))_{k} \,\partial_\theta \ubU_1\rangle }{\langle \partial_\theta\ubU_1 \cdot \bJ^{-1} \partial_\theta \ubU_1\rangle}\,.
\end{equation}
More explicitly, the numerator here above reduces to
$$\langle\nabla_{\bU}^2\Ham(\ubM,0) (\partial_\theta\bU_1,\partial_\theta\bU_1)\rangle
\,-\,3k^2\,\langle\nabla_{\bU_x}^2\Ham(\ubM,0) (\partial_\theta\bU_1,\partial_\theta^3\bU_1)\rangle\,.$$

Returning to a more general small amplitude modulated wave train, the previous remarks show that we can expand its lower order term $\bU^0$, which we merely denote by $\bU$ in what follows, as
$$
\begin{array}{rl}
\bU(T,X,\theta)=&\langle \bU\rangle(T,X)
\,+\,a(T,X)\ \bU_1(k(T,X),\langle \bU\rangle(T,X);\theta)\\
&+\,a(T,X)^2\ \bU_2(k(T,X),\langle \bU\rangle(T,X);\theta)\\
&+\,a(T,X)^3\ \bU_3(k(T,X),\langle \bU\rangle(T,X);\theta)\,+\,o(a(T,X)^3)\,.
\end{array}
$$
Our aim is to show that, for small $a$, the modulated system \eqref{eq:Whithamsyst} associated with $\bU$ decouples into the lower order system
\begin{equation}\label{eq:lower}
\partial_T\langle \bU\rangle=\bJ\,\partial_X(\nabla_{\bU} \Ham(\langle \bU\rangle,0))\,,
\end{equation}
and the $2\times 2$ system (pointed out in \cite[p.~490]{Whitham} when $\langle \bU\rangle\equiv \ubM$)
\begin{equation}\label{eq:ka2}
\left\{\begin{array}{rlll}\partial_Tk &\,-\,\omega_{0,k}(k,\langle \bU\rangle)\ \partial_Xk & \,-\,\omega_2(k,\langle \bU\rangle)\ \partial_X(a^2)&\,=\,\omega_{0,\bM}(k,\langle \bU\rangle)\ \partial_X\langle \bU\rangle\,,\\[0.5em]
\partial_T(a^2)& \,-\,\omega_{0,kk}(k,\langle \bU\rangle) a^2\,\partial_Xk&\,-\,\omega_{0,k}(k,\langle \bU\rangle) \partial_X(a^2)&\,=\,\cO(a^2)\,\partial_X\langle \bU\rangle\,.
\end{array}\right.
\end{equation}
As a consequence, a necessary condition for \eqref{eq:Whithamsyst} to be hyperbolic in the small amplitude limit is that both \eqref{eq:lower} and the lef-hand side of \eqref{eq:ka2} with frozen mean be hyperbolic. Regarding the latter, this requires  that 
$\omega_2(k,\langle \bU\rangle)\omega_{0,kk}(k,\langle \bU\rangle)$ be positive, in which case the characteristic velocities of \eqref{eq:ka2} are $\cO(a)$ perturbations ($-\,\omega_{0,k}(k,\langle \bU\rangle)\pm a \sqrt{\omega_2(k,\langle \bU\rangle)\omega_{0,kk}(k,\langle \bU\rangle)}$) of 
$-\,\omega_{0,k}(k,\langle \bU\rangle)$. As to the former system \eqref{eq:lower}, 
it is for instance nothing but the first order conservation law
\begin{equation}\label{eq:burgers}
\partial_t \langle v\rangle+\partial_Xp(\langle v\rangle)\,=\,0
\end{equation}
if the original equation is (gKdV), that is Eq.~\eqref{eq:KdVgen}. Of course \eqref{eq:burgers} is always hyperbolic, as any (real) first order conservation law. 
However, the hyperbolicity of the reduced system \eqref{eq:lower} is not automatic in general. In particular, 
when we start from the Euler--Korteweg system, we receive as \eqref{eq:lower} the Euler system. Therefore, a necessary condition for the modulated system to be hyperbolic in the small amplitude limit is that the mean value of the wave train be a stable state of the Euler system. According to Theorem~\ref{thm:absstab}, this implies that small amplitude periodic solutions whose mean value is an unstable state of the Euler system are themselves (spectrally) unstable. 

\paragraph{Derivation of \eqref{eq:lower} and \eqref{eq:ka2}.} There is not much to do about the first equation in \eqref{eq:ka2}. Indeed, the first equation $\partial_T k + \partial_X(ck)=0$ in \eqref{eq:Whithamsyst} together with the fact that 
$ck=-\omega_0(k,\langle \bU\rangle)-a^2\omega_2(k,\langle \bU\rangle)+o(a^2)$ yield 
$$\partial_Tk \,-\,(\omega_{0,k}\,+\,\,a^2\,\omega_{2,k}) \partial_Xk  \,-\,\omega_2\,\partial_X(a^2)\,=\,(\omega_{0,\bM}\,+\,\,a^2\,\omega_{2,\bM}) \partial_X\,\langle \bU\rangle+\,o(a^2)\,,$$
and the terms $a^2\,\omega_{2,k}$ and $a^2\,\omega_{2,\bM}$ are negligible compared to the $\cO(a)$ perturbation of $-\,\omega_{0,k}$ we are expecting (this was already pointed out in \cite[p.~490]{Whitham}). So the main points consist in showing that the middle equations 
$\partial_T \langle \bU \rangle\,=\,\bJ\,\partial_X\langle \bG\rangle$ in \eqref{eq:Whithamsyst} do reduce to \eqref{eq:lower} when $a$ goes to zero,
and that together with the last equation $\partial_T \langle Q \rangle\,=\,\partial_X\langle S \rangle$ in \eqref{eq:Whithamsyst} they simplify into an equation for $a^2$ whose principal part amounts to
$$\partial_T(a^2) \,-\,\partial_X(\omega_{0,k}(k,\langle \bU\rangle) a^2)\,=\,\cO(a^2)\,\partial_X\langle \bU\rangle\,.$$
Let us start by expanding $\bG$ in powers of $a$. Note that we shall need an expansion not only for $\langle \bG\rangle$ but also for $\langle \bU\cdot \bG\rangle$, which arises in $S$. Recall indeed that 
$$\bG=\delta \Hamk[\bU]\,,\;S\,=\,\bU \cdot \bG\,- \Ham(\bU,k\partial_\theta \bU)\,+\,(k\partial_\theta 
 U_{\alpha})\,\frac{\partial \Ham}{\partial  U_{\alpha,x}}(\bU,k\partial_\theta \bU)\,.$$
We thus see that 
$$\bG=\delta \Hamk[\langle\bU\rangle]\,+\,a \,\delta^2\!\Hamk[\langle\bU\rangle](\bU_1)\,+\,a^2\,\big( \delta^2\!\Hamk[\langle\bU\rangle](\bU_2)\,+\,
\tfrac{1}{2}\delta^3\!\Hamk[\langle\bU\rangle](\bU_1,\bU_1)\big)\,+\,o(a^2)\,,$$
with 
$$\delta \Hamk[\langle\bU\rangle]\,=\,\nabla_{\bU} \Ham(\langle \bU\rangle,0)\,,$$
and $$\langle \delta^2\!\Hamk[\langle\bU\rangle](\bU_1)\rangle \,=\,0\,,\;\langle \delta^2\!\Hamk[\langle\bU\rangle](\bU_1)\rangle \,=\,0\,,$$
since $\delta^2\!\Hamk[\langle\bU\rangle]$ is a constant-coefficient differential operator, and $\bU_1$, $\bU_2$, as well of course as their derivatives, have zero mean values. Therefore, it just remains
$$\langle \bG\rangle\,=\,\nabla_{\bU} \Ham(\langle \bU\rangle,0)\,+\,\tfrac{1}{2}\,a^2\,\langle \delta^3\!\Hamk[\langle\bU\rangle](\bU_1,\bU_1)\rangle \,+\,o(a^2)\,.$$
By neglecting all the $O(a^2)$ in $\partial_T \langle \bU \rangle\,=\,\bJ\,\partial_X\langle \bG\rangle$, we thus receive \eqref{eq:lower}.
Let us turn to the expansions of $\langle Q\rangle $ and $\langle S\rangle$.
The expansion of $\langle Q\rangle$ is readily seen to be given by
$$\langle Q\rangle\,=\, \tfrac{1}{2} \langle\bU\rangle \cdot \bJ^{-1}\langle\bU\rangle \,+\,
\tfrac12\,
a^2\,\langle \bU_1\cdot \bJ^{-1}\bU_1\rangle \,+\,o(a^2)\,,$$
and we also find that
$$\langle\bU\cdot  \bG\rangle\,=\,\langle \bU\rangle \cdot \langle \bG\rangle\,+\,a^2\,
\langle \bU_1\cdot\delta^2\!\Hamk[\langle\bU\rangle](\bU_1)\rangle
\,+\,o(a^2)\,.$$
(We have used again that $\bU_1$ and $\bU_2$ have zero mean values.) 
Furthermore, we have
$$\langle\Hamk[\bU]\rangle\,=\,\Hamk[\langle\bU\rangle]\,+\,\tfrac{1}{2}\,a^2\,\langle\delta^2\!\Hamk[\langle\bU\rangle](\bU_1,\bU_1)\rangle\,+\,o(a^2)\,,$$
and finally
$$\begin{array}[t]{l} (k\partial_\theta 
 U_{\alpha})\,\dfrac{\partial \Ham}{\partial  U_{\alpha,x}}(\bU,k\partial_\theta \bU)\,=\,a\,(k\partial_\theta 
 U_{1\alpha})\,\dfrac{\partial \Ham}{\partial  U_{\alpha,x}}(\langle\bU\rangle,0)\,+\,a^2\,(k\partial_\theta 
 U_{2\alpha})\,\dfrac{\partial \Ham}{\partial  U_{\alpha,x}}(\langle\bU\rangle,0)\\ [10pt]
 +\, a^2\,(k\partial_\theta 
 U_{1\alpha})\,\left(U_{1\beta}\,\dfrac{\partial^2 \Ham}{\partial  U_{\alpha,x}\partial U_\beta}(\langle\bU\rangle,0) +\,(k\partial_\theta U_{1\beta})\dfrac{\partial^2 \Ham}{\partial  U_{\alpha,x}\partial U_{\beta,x}}(\langle\bU\rangle,0)\right)\,+\,o(a^2)\,,\end{array}
$$
hence 
$$\left\langle  (k\partial_\theta U_{\alpha})\,\dfrac{\partial \Ham}{\partial  U_{\alpha,x}}(\bU,k\partial_\theta \bU)\right\rangle\,=\,a^2\,k^2\,
\langle\nabla_{\bU_x}^2\Ham(\langle\bU\rangle,0) (\partial_\theta\bU_1,\partial_\theta\bU_1)\rangle\,+\,o(a^2)\,.
$$
Now, taking the inner product with $\langle \bU \rangle$ of $\bJ^{-1}\,\partial_T \langle \bU \rangle\,=\,\partial_X\langle \bG\rangle$ and subtracting to  $\partial_T \langle Q \rangle\,=\,\partial_X\langle S \rangle$ we see that
$$\partial_T\left(\tfrac12\,
a^2\,\langle \bU_1\cdot \bJ^{-1}\bU_1\rangle\right)\begin{array}[t]{l}\,=\,\displaystyle \partial_X\left(\langle\bU\cdot  \bG\rangle \,-\,\langle\Hamk[\bU]\rangle\,+\,
\left\langle  (k\partial_\theta U_{\alpha})\,\dfrac{\partial \Ham}{\partial  U_{\alpha,x}}(\bU,k\partial_\theta \bU)\right\rangle\right)\\ [10pt]
\quad \,-\,\langle \bU \rangle\cdot \partial_X\langle \bG\rangle\,+\,o(a^2) \,.\end{array}$$
As expected, the zeroth order terms in the right-hand side cancel out, and we receive after several simplifying operations
$$\begin{array}[t]{rcl} 
\displaystyle
\partial_T\left(
a^2\,\langle \bU_1\cdot \bJ^{-1}\bU_1\rangle\right)&=&
\displaystyle \partial_X\Big( a^2 \langle\nabla_{\bU}^2\Ham(\langle\bU\rangle,0) (\bU_1,\bU_1) \\[10pt]
&&\displaystyle\qquad\qquad \,+\,3\,k^2\,\nabla_{\bU_x}^2\Ham(\langle\bU\rangle,0) (\partial_\theta\bU_1,\partial_\theta\bU_1)\rangle
\Big)\\[10pt]
&&\displaystyle \,+\,a^2\,\langle \delta^3\!\Hamk[\langle\bU\rangle](\bU_1,\bU_1)\rangle\ \d_X\langle\bU\rangle\,+\,o(a^2) \,.\end{array}$$
In order to conclude, let us recall \eqref{eq:c0p} hence, since the dependency of $\ubU_1$ is a cosine function,
$$
\omega_{0,k}(k,\ubM)\,=\,\frac{\langle\nabla_{\bU}^2\Ham(\langle\bU\rangle,0) (\bU_1,\bU_1)\,+\,3\,k^2\,\nabla_{\bU_x}^2\Ham(\langle\bU\rangle,0) (\partial_\theta\bU_1,\partial_\theta\bU_1)\rangle}{\langle \partial_\theta\ubU_1 \cdot \bJ^{-1} \partial_\theta \ubU_1\rangle}\,.
$$
Therefore, the equation above reads
$$\partial_T\left(
a^2\,\langle \bU_1\cdot \bJ^{-1}\bU_1\rangle\right)\,+\,\displaystyle \partial_X \Big(\omega_{0,k}(k,\ubM)\, a^2 \langle \bU_1\cdot \bJ^{-1}\bU_1\rangle\Big)\,=\,\cO(a^2)\,\partial_X\langle \bU\rangle\,+\,o(a^2) \,. $$
The factor $\langle \bU_1\cdot \bJ^{-1}\bU_1\rangle$ can now be eliminated by using the equation on $k$.

\section{Application to the Euler--Korteweg system}\label{s:EK}

In this section, we concentrate on the Euler--Korteweg equations,  \eqref{eq:EKabs1d} in Eulerian coordinates, and  \eqref{eq:EKabsLagb} in mass Lagrangian coordinates. We derive Whitham's modulation equations for both systems, and show that away from vacuum, the modulated systems are equivalent through a mass Lagrangian change of coordinates. Thus, it is sufficient to check the hyperbolicity of modulation equations for either one of these systems in order to determine 
whether our necessary condition for the spectral stability of periodic waves under small wave number perturbations is satisfied.

\subsection{Periodic traveling waves}
 Periodic traveling wave solutions to \eqref{eq:EKabs1d} and \eqref{eq:EKabsLagb} are respectively of the form
$(\rho,\vits)=(\Rho,\Vits)(x-\sigma t)$ and $(\vol,\vits)=(\Vol,\VitsL)(y+j t)$, with a one-to-one correspondance between the two frameworks encoded by
$$\Rho(\xi) \Vol(\Y(\xi))\,=\,1\,,\;\Vits(\xi)=\VitsL(\Y(\xi))\,,
\dfrac{\dif \Y}{\dif \xi}\,=\,\Rho\,=\,\dfrac{1}{\Vol(\Y)}\,.$$
Up to translations, these periodic traveling waves generically arise as four-parameter families. Natural parameters are 
\begin{itemize}
\item their speed, that is $\sigma$ in Eulerian coordinates, and $-j$ in Lagrangian coordinates,
\item a first constant of integration, which turns out to be $j$ in Eulerian coordinates, and $\sigma$ in Lagrangian coordinates.
\item two other constants of integration/Lagrange multipliers, which we denote by $\lambda$ and $\mu$, in the profile equations.
\end{itemize}
To be more precise, the profile equations read
\begin{equation}\label{eq:prof}\left\{\begin{array}{l}\Rho\,(\,\Vits\,-\,\sigma)\,\equiv\,j\,,\\
(\Legendre_\rho \Lag)(\Rho,\Rho_\xi)\,\equiv\,-\,{\lambda}\end{array}\right.\;\qquad\;\left\{\begin{array}{l} \VitsL\,-\,j\,\Vol\,\equiv\,\sigma\,,\\
(\Legendre_\vol{\lag })(\Vol,\Vol_\zeta)\,=\,-\,{\mu}\,,\end{array}\right.
\end{equation}
where the Lagrangians $\Lag$ and $\lag$ are defined by
\begin{equation}\label{eq:lag}
\Lag:\,=\,\En\,-\,\frac{j^2}{2\rho} \,-\,\mu\rho\,,\quad \lag \,:=\, \en\,-\,\frac{j^2\,\vol^2}{2}\,-\,\lambda\,\vol\,,
\end{equation}
and
$$\Legendre_\rho \Lag \,:=\,\rho_x\,\frac{\partial \Lag }{\partial \rho_x}\,-\,\Lag \,,\quad \Legendre_\vol \lag \,:=\,\vol_y\,\frac{\partial \lag }{\partial \vol_y}\,-\,\lag\,,$$
are (obvious) first integrals of the Euler--Lagrange equations $\Euler_\rho\Lag\,=\,0$ and $\Euler_\vol \lag\,=\,0$ respectively.
In addition, there is a simple relationship between the mean values of Eulerian profiles and of Lagrangian profiles. Indeed,
if  $\Xi$ is the period of a traveling wave in Eulerian coordinates, the period of its counterpart in Lagrangian coordinates
is $\Y(\Xi)$ (if $\Y$ is chosen so that $\Y(0)=0$),
 and we have
$$\langle \Rho \rangle := \frac{1}{\Xi}\int_{0}^{\Xi} \Rho(\xi)\,\dif \xi\,=\,\frac{\Y(\Xi)}{\Xi}\,,\quad 
\langle \Vol \rangle := \frac{1}{\Y(\Xi)}\int_{0}^{\Y(\Xi)} \Vol(\zeta)\,\dif \zeta\,=\,\frac{\Xi}{\Y(\Xi)}\,,$$
$$\langle \Vits \rangle := \frac{1}{\Xi}\int_{0}^{\Xi} \Vits(\xi)\,\dif \xi\,=\,\frac{\Y(\Xi)}{\Xi}\,\langle \Vol \,\VitsL\rangle,\quad 
\langle \VitsL \rangle := \frac{1}{\Y(\Xi)}\int_{0}^{\Y(\Xi)} \VitsL(\zeta)\,\dif \zeta\,=\,\frac{\Xi}{\Y(\Xi)}\,\langle \Rho \,\Vits\rangle\,,$$
hence the remarkable identities
$$\langle \Rho \rangle\,=\,\frac{1}{\langle \Vol \rangle}\,,\quad 
\langle \Vits \rangle\,=\,\frac{\langle \Vol \,\VitsL\rangle}{\langle \Vol \rangle}\,,\quad 
\langle \VitsL \rangle\,=\,\frac{\langle \Rho \,\Vits\rangle}{\langle \Rho\rangle}\,.$$
Note that these relations are of course compatible with the profile equations
$\Rho\,(\,\Vits\,-\,\sigma)\,\equiv\,j$, $\VitsL\,-\,j\,\Vol\,\equiv\,\sigma$.
We warn the reader that for convenience we denote by the same brackets $\langle \cdot \rangle$ the mean values with respect to $\xi$ and those with respect to $\zeta$. This should not be too much confusing  since we use different notations for the Eulerian and the Lagrangian dependent variables. 

\subsection{Whitham's modulated equations}
As done previously in the abstract case, we look for solutions of \eqref{eq:EKabs1d} and \eqref{eq:EKabsLagb} having asymptotic expansions of the form
$$(\rho,\vits)(t,x)=(\rho_0,\vits_0)(\varepsilon t, \varepsilon x, \Phi(\varepsilon t, \varepsilon x)/\varepsilon)\,+\,
\varepsilon\,(\rho_1,\vits_1)(\varepsilon t, \varepsilon x, \Phi(\varepsilon t, \varepsilon x)/\varepsilon,\varepsilon)\,+\,o(\varepsilon)\,,$$
$$(\vol,\vits)(s,y)=(\vol_0,\vitsL_0)(\varepsilon s, \varepsilon y, \phi(\varepsilon s, \varepsilon y)/\varepsilon)\,+\,
\varepsilon\,(\vol_1,\vitsL_1)(\varepsilon s, \varepsilon y, \phi(\varepsilon s, \varepsilon y)/\varepsilon,\varepsilon)\,+\,o(\varepsilon)\,,$$
where the profiles $(\rho_0,\vits_0)$, $(\rho_1,\vits_1)$, $(\vol_0,\vitsL_0)$, and $(\vol_1,\vitsL_1)$ are $1$-periodic in their third variable $\theta$.
Denoting by $T$, $X$, $S$, and $Y$ the rescaled variables ($\varepsilon t$, $\varepsilon x$, $\varepsilon s$, and $\varepsilon y$),
we introduce the further notations
$$K:=\Phi_X\,,\;\Omega:=\Phi_T\,,\;\sigma\,:=-\,\frac{\Omega}{K}\,,$$
$$k:=\phi_Y\,,\;\omega:=\phi_S\,,\;j\,:=\frac{\omega}{k}\,.$$
The existence of smooth phases $\Phi$ and $\phi$ requires the standard compatibility conditions
\begin{equation}\label{eq:comp}
\partial_T K \,+\,\partial_X (\sigma K)\,=\,0\,,\quad
\partial_S k \,-\,\partial_Y (jk)\,=\,0\,.
\end{equation}
We obtain equations for the leading profiles in both Eulerian and Lagrangian coordinates by plugging the asymptotic expansions in \eqref{eq:EKabs1d} and 
\eqref{eq:EKabsLagb}, using that
 $$\partial_t= \varepsilon \partial_T\,+\,\Omega\,\partial_\theta\,,\quad \partial_x= \varepsilon \partial_X\,+\,K\,\partial_\theta\,,\qquad\partial_s= \varepsilon \partial_S\,+\,\omega\,\partial_\theta\,,\quad \partial_y= \varepsilon \partial_Y\,+\,k\,\partial_\theta\,,
 $$ and retaining only the leading order terms. As expected, we recover the traveling profile equations \eqref{eq:prof}-\eqref{eq:lag}, up to the rescaling that makes their period equal to one. In Eulerian coordinates, this gives
\begin{equation}
\label{eq:EKW0}
\partial_\theta(\rho_0\,\vits_0)\,-\sigma\,\partial_\theta\rho_0\,=\,0\,,\quad (\vits_0-\sigma)\,\partial_\theta\vits_0\,+\,\partial_\theta\chem_0\,=\,0\,,
\end{equation}
where
$$\chem_0:=\,\frac{\partial \En}{\partial \rho}(\rho_0,K\partial_\theta\rho_0 )\,-\,K\,\Dif_\theta\left(\frac{\partial \En}{\partial \rho_x}(\rho_0,K\partial_\theta\rho_0 )\right)\,,$$
while in Lagrangian coordinates,
\begin{equation}
\label{eq:EKLagW0}
\partial_\theta\vitsL_0\,-\,j\,\partial_\theta\vol_0\,=\,0\,,\quad j\,\partial_\theta\vitsL_0\,+\,\partial_\theta\press_0\,=\,0\,,
\end{equation}
where
$$\press_0:=-\,\frac{\partial \en}{\partial \vol}(\vol_0,k\partial_\theta\vol_0 )\,+\,k\,\Dif_\theta\left(\frac{\partial \en}{\partial \vol_y}(\vol_0,k\partial_\theta\vol_0 )\right)\,.$$

The next order leads to Whitham's modulated equations. Indeed, collecting all the terms of order one in \eqref{eq:EKabs1d} and \eqref{eq:EKabsLagb} we get
 \begin{equation}\label{eq:EKabs1d1}
\left\{\begin{array}{l}-\sigma\,K\,\partial_\theta\rho_1\,+\,K\,\partial_\theta(\rho_0\,\vits_1\,+\,\rho_1\,\vits_0)\,+\,\partial_T\rho_0 +\partial_X (\rho_0 \vits_0)\,=\,0\,,\\ [5pt]
-\sigma\,K\,\partial_\theta\vits_1\,+\,K\,\partial_\theta(\vits_0\,\vits_1)\,+\,K\,\partial_\theta(A_0 \rho_1\,+\,B_0)\,+\,\partial_T \vits_0 + \vits_0\partial_X\vits_0 \,+\,\partial_X\chem_0\,=\,0\,,
\end{array}\right.
\end{equation}
\begin{equation}\label{eq:EKabsLagb1}
\left\{\begin{array}{l}j\,k\,\partial_\theta \vol_1 \,-\,k\partial_\theta\vitsL_1\,+\,\partial_S{\vol}_0 \,-\,\partial_Y{\vitsL}_0\,=\,0\,,\\ [5pt]
j\,k\,\partial_\theta \vitsL_1\,-\,k\partial_\theta (a_0\,\vol_1\,+\,b_0)\,+\,\partial_S {\vits}_0 \,+\, \partial_Yp_0\,=\,0\,,
\end{array}\right.
\end{equation}
where
$$A_0:= - K\Dif_\theta \Cap_0 K \Dif_\theta\,+\,\Gamma_0\,,\quad 
\Cap_0:=\frac{\partial^2 \En}{\partial \rho_x^2}(\rho_0,K \partial_\theta\rho_0)\,,$$
$$\Gamma_0:=\frac{\partial^2 \En}{\partial \rho^2}(\rho_0,K \partial_\theta\rho_0)\,-\,K\,\Dif_{\theta}\left(\frac{\partial^2 \En}{\partial\rho \partial\rho_x}(\rho_0,K \partial_\theta\rho_0)\right)\,,$$
$$B_0:=\frac{\partial^2\En}{\partial\rho\partial\rho_x}(\rho_0,K \partial_\theta\rho_0)\,\partial_X\rho_0\,-\,\partial_X\left(\frac{\partial\En}{\partial\rho_x}(\rho_0,K \partial_\theta\rho_0)\right)\,-\,K\,\Dif_\theta\left(
\Cap_0\,\partial_X\rho_0\right)\,,$$
$$a_0:= - k\Dif_\theta \cap_0 k \Dif_\theta\,+\,\gamma_0\,,\quad 
\cap_0:=\frac{\partial^2 \en}{\partial \vol_y^2}(\vol_0,k \partial_\theta\vol_0)\,,$$
$$\gamma_0:=\frac{\partial^2 \en}{\partial \vol^2}(\vol_0,k \partial_\theta\vol_0)\,-\,k\,\Dif_{\theta}\left(\frac{\partial^2 \en}{\partial\vol \partial\vol_y}(\vol_0,k \partial_\theta\vol_0)\right)\,,$$
$$b_0\,:=\,\frac{\partial^2\en}{\partial\vol\partial\vol_y}(\vol_0,k \partial_\theta\vol_0)\,\partial_Y\vol_0\,-\,\partial_Y\left(\frac{\partial\en}{\partial\vol_y}(\vol_0,k \partial_\theta\vol_0)\right)\,-\,k\,\Dif_\theta\left(
\cap_0\,\partial_Y\vol_0\right)\,.$$
Integrating \eqref{eq:EKabs1d1} and \eqref{eq:EKabsLagb1}
with respect to $\theta$ over $[0,1]$, we get the modulated equations
 \begin{equation}\label{eq:EKW12}
\left\{\begin{array}{l}\partial_T\langle\rho_0\rangle +\partial_X \langle\rho_0 \vits_0\rangle\,=\,0\,,\\ [5pt]
\partial_T \langle\vits_0\rangle + \partial_X\langle\frac{1}{2}\vits_0^2\rangle \,+\,\partial_X \langle\chem_0\rangle\,=\,0\,,
\end{array}\right.
\end{equation}
\begin{equation}\label{eq:EKLagW12}
\left\{\begin{array}{l}\partial_S\langle{\vol}_0\rangle \,-\,\partial_Y\langle{\vitsL}_0\rangle\,=\,0\,,\\ [5pt]
\partial_S \langle{\vitsL}_0\rangle \,+\, \partial_Y\langle p_0\rangle\,=\,0\,.
\end{array}\right.
\end{equation}
(Observe that the terms $B_0$ and $b_0$, the only ones involving cross derivatives of the leading profiles, play no role at this level.)
Now, taking into account the compatibility conditions in \eqref{eq:comp}, we only need to find a fourth averaged equation to have a complete set of modulated equations. We can proceed in two ways. 

As mentioned in the abstract case, the fastest way consists in averaging additional conservation laws satisfied (at least formally) by solutions of \eqref{eq:EKabs1d} and \eqref{eq:EKabsLagb}. Two possible choices are the conservation law of total energy (associated with invariance with respect to time translations, \emph{via} Noether's theorem), and the (local) conservation law of \emph{Benjamin}'s impulse
 (associated with  invariance with respect to spatial translations).
 These conservation laws read
\begin{equation}\label{eq:consEn}
\partial_t(\tfrac{1}{2}\,\rho\,\vits^2+\En)\,+\,\partial_x\left(\tfrac{1}{2}\,\rho\,\vits^3\,+\,\rho\,\vits\,\Euler_\rho \En\,+\,\partial_x(\rho \,\vits)\,\frac{\partial \En}{\partial \rho_x}\right)\,=\,0\,, 
\end{equation}
\begin{equation}\label{eq:consImpulse}
\partial_t(\rho\,\vits)\,+\,\partial_x\left(\rho\,\vits^2\,+\,\rho\, \Euler_\rho \En \,+\,\Legendre_\rho \En\right)\,=\,0\,.
\end{equation}
in the Eulerian framework, and 
\begin{equation}\label{eq:consen}
\partial_s(\tfrac{1}{2}\,\vits^2+\en)\,-\,\partial_y\left(\vits\,\Euler_\vol \en\,+\,(\partial_y\vits)\,\frac{\partial \en}{\partial \vol_y}\right)\,=\,0\,,
\end{equation}
\begin{equation}\label{eq:consimpulse}
\partial_s(\vol\,\vits)\,-\,\partial_y\left(\tfrac{1}{2}\,\vits^2\,+\,\vol \,\Euler_\vol \en \,+\,\Legendre_\vol \en\right)\,=\,0\,.
\end{equation}
in the Lagrangian framework. Upon plugging the asymptotic expansions and averaging we get
\begin{equation}\label{eq:consEnav}
\partial_T\langle \tfrac{1}{2}\,\rho_0\,\vits_0^2+\En_0\rangle \,+\,\partial_X\left\langle \tfrac{1}{2}\,\rho_0\,\vits_0^3\,+\,\rho_0\,\vits_0\,\chem_0\,+\,K\,\partial_\theta (\rho_0 \,\vits_0)\,\frac{\partial \En}{\partial \rho_x}(\rho_0,K\partial_\theta \rho_0)\right\rangle \,=\,0\,, 
\end{equation}
\begin{equation}\label{eq:consImpulseav}
\partial_T\langle \rho_0\,\vits_0\rangle \,+\,\partial_X\left\langle \rho_0\,\vits_0^2\,+\,\rho_0\,\chem_0 \,+\,K\,(\partial_\theta \rho_0)\,\frac{\partial \En}{\partial \rho_x}(\rho_0,K\partial_\theta \rho_0)\,-\,\En(\rho_0,K\partial_\theta \rho_0)
\right\rangle \,=\,0\,
\end{equation}
in the Eulerian framework, and 
\begin{equation}\label{eq:consenav}
\partial_S\langle \tfrac{1}{2}\,\vitsL_0^2+\en_0\rangle \,+\,\partial_Y\left\langle \vitsL_0\,p_0\,-\,k\,(\partial_\theta\vitsL_0)\,\frac{\partial \en}{\partial \vol_y}(\vol_0,k\partial_\theta\vol_0)\right\rangle \,=\,0\,,
\end{equation}
\begin{equation}\label{eq:consimpulseav}
\partial_S\langle \vol_0\,\vitsL_0\rangle \,+\,\partial_Y\left\langle -\,\tfrac{1}{2}\,\vitsL_0^2\,+\,\vol_0 \,p_0\,+\,\en(\vol_0,k\partial_\theta\vol_0) \,-\,k\,(\partial_\theta \vol_0)\,\frac{\partial \en}{\partial \vol_y}(\vol_0,k\partial_\theta\vol_0)\right\rangle \,=\,0\,
\end{equation}
in the Lagrangian one.
For simplicity, we have denoted by $\En_0$ and $\en_0$ the energies evaluated at $(\rho_0,K\partial_\theta \rho_0)$ and 
$(\vol_0,k\partial_\theta\vol_0)$ respectively.
At first glance it may look like we have five modulated equations in each framework, 
namely the compatibility condition in \eqref{eq:comp}, the two equations in 
either \eqref{eq:EKW12} or \eqref{eq:EKLagW12}, as well as 
\eqref{eq:consEnav}-\eqref{eq:consImpulseav} or \eqref{eq:consenav}-\eqref{eq:consimpulseav}.
In fact, the profile equations in \eqref{eq:EKW0} imply that the averaged energy equation
\eqref{eq:consEnav} is a consequence of \eqref{eq:comp} and \eqref{eq:EKW12}~\eqref{eq:consImpulseav}, and similarly 
 \eqref{eq:EKLagW0}-\eqref{eq:EKLagW12}-\eqref{eq:consimpulseav} imply \eqref{eq:consenav}.

A more indirect way to derive a fourth modulated equation consists in taking the inner product of \eqref{eq:EKabs1d1} and \eqref{eq:EKabsLagb1} with $(\vits_0,\rho_0)$ and $(\vitsL_0,\vol_0)$ respectively. Accordingly with the abstract case considered in Section \ref{s:permod}, those choices are  dictated by Benjamin's impulses $\rho\vits$ and $\vol\vits$, of which the variational derivatives are respectively $(\vits,\rho)^t$ and $(\vits,\vol)^t$, see 
Appendix \ref{concrete} for more details.

To summarize, we have the following.
\begin{proposition}  Whitham's modulated equations associated with \eqref{eq:EKabs1d} and \eqref{eq:EKabsLagb}  read, respectively,
\begin{itemize}
\item in the Eulerian framework
\begin{equation}\label{eq:EKW14}
\left\{\begin{array}{l}
\partial_T K \,+\,\partial_X (\sigma K)\,=\,0\,,\\ [5pt]
\partial_T\langle\rho_0\rangle +\partial_X \langle\rho_0 \vits_0\rangle\,=\,0\,,\\ [5pt]
\partial_T \langle\vits_0\rangle + \partial_X\langle\frac{1}{2}\vits_0^2\rangle \,+\,\partial_X \langle\chem_0\rangle\,=\,0\,, \\ [5pt]
\partial_T\langle \rho_0\,\vits_0\rangle \,+\,\partial_X\left\langle \rho_0\,\vits_0^2\,+\,\rho_0\,\chem_0 \,+\,K\,(\partial_\theta \rho_0)\,\dfrac{\partial \En}{\partial \rho_x}(\rho_0,K\partial_\theta \rho_0)\,-\,\En_0
\right\rangle \,=\,0\,,
\end{array}\right.
\end{equation}
which is endowed with the additional conservation law
\begin{equation}\label{eq:consEnW}
\partial_T\langle \tfrac{1}{2}\,\rho_0\,\vits_0^2+\En_0\rangle \,+\,\partial_X\left\langle \tfrac{1}{2}\,\rho_0\,\vits_0^3\,+\,\rho_0\,\vits_0\,\chem_0\,+\,K\,\partial_\theta (\rho_0 \,\vits_0)\,\dfrac{\partial \En}{\partial \rho_x}(\rho_0,K\partial_\theta \rho_0)\right\rangle \,=\,0\,,
\end{equation}

\item and in the Lagrangian framework
\begin{equation}\label{eq:EKLagW14}\left\{\begin{array}{l}
\partial_S k \,-\,\partial_Y (jk)\,=\,0\,,\\ [5pt]
\partial_S\langle{\vol}_0\rangle \,-\,\partial_Y\langle{\vitsL}_0\rangle\,=\,0\,,\\ [5pt]
\partial_S \langle{\vitsL}_0\rangle \,+\, \partial_Y\langle p_0\rangle\,=\,0\,, \\ [5pt]
\partial_S\langle \vol_0\,\vitsL_0\rangle \,+\,\partial_Y\left\langle -\,\tfrac{1}{2}\,\vitsL_0^2\,+\,\vol_0 \,p_0\,+\,\en_0 \,-\,k\,(\partial_\theta \vol_0)\,\dfrac{\partial \en}{\partial \vol_y}(\vol_0,k\partial_\theta\vol_0)\right\rangle \,=\,0\,,
\end{array}\right.
\end{equation}
which is endowed with the additional conservation law
\begin{equation}\label{eq:consenW}
\partial_S\langle \tfrac{1}{2}\,\vitsL_0^2+\en_0\rangle \,+\,\partial_Y\left\langle \vitsL_0\,p_0\,-\,k\,(\partial_\theta\vitsL_0)\,\frac{\partial \en}{\partial \vol_y}(\vol_0,k\partial_\theta\vol_0)\right\rangle \,=\,0\,.
\end{equation}
\end{itemize}
\end{proposition}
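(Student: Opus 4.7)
The plan is to assemble the derivation already sketched in the preceding discussion into a clean proof, and then verify the two new claims that are packaged into the proposition: first, that averaging the impulse conservation law yields a fourth independent modulated equation compatible with the compatibility relation and the averaged continuity/momentum equations; and second, that the averaged energy equation \eqref{eq:consEnW} (respectively \eqref{eq:consenW}) is a consequence of the other modulated equations rather than a new independent relation.

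First, I would substitute the two-scale ansatz
$$(\rho,\vits)(t,x)=(\rho_0,\vits_0)(T,X,\theta)+\varepsilon(\rho_1,\vits_1)(T,X,\theta,\varepsilon)+o(\varepsilon),\qquad \theta=\Phi(T,X)/\varepsilon$$
into \eqref{eq:EKabs1d}, using $\partial_t=\varepsilon\partial_T+\Omega\partial_\theta$ and $\partial_x=\varepsilon\partial_X+K\partial_\theta$. Matching at order $\varepsilon^0$ and integrating in $\theta$ reproduces the profile equations \eqref{eq:EKW0}, which by construction of $\sigma=-\Omega/K$ coincide with \eqref{eq:prof} after rescaling so that the profiles have period one in $\theta$. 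The same procedure applied to \eqref{eq:EKabsLagb} yields \eqref{eq:EKLagW0}. The compatibility of the phase, $\partial_T K+\partial_X(\sigma K)=0$ (and the Lagrangian analogue), is a consequence of $K=\Phi_X$, $\Omega=\Phi_T$.

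Next, at order $\varepsilon^1$ I obtain \eqref{eq:EKabs1d1} and \eqref{eq:EKabsLagb1}. Averaging both over $\theta\in[0,1]$ kills all $K\partial_\theta(\cdot)$ and $k\partial_\theta(\cdot)$ contributions by periodicity, leaving \eqref{eq:EKW12} and \eqref{eq:EKLagW12}. To obtain the fourth equation, rather than taking the inner product of \eqref{eq:EKabs1d1} with $(\vits_0,\rho_0)$ and reducing (which is the abstract route of Section \ref{s:permod}), I would use the direct conservation laws \eqref{eq:consImpulse} and \eqref{eq:consimpulse}: substituting the ansatz, expanding to order $\varepsilon$, and averaging in $\theta$ produces \eqref{eq:consImpulseav} and \eqref{eq:consimpulseav}, which become the last lines of \eqref{eq:EKW14} and \eqref{eq:EKLagW14}. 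The additional conservation laws \eqref{eq:consEnW} and \eqref{eq:consenW} are obtained in exactly the same fashion from \eqref{eq:consEn} and \eqref{eq:consen}.

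The core check — and I expect this to be the main obstacle — is to verify that \eqref{eq:consEnW} is redundant: specifically, that once \eqref{eq:comp} and the three equations of \eqref{eq:EKW12}--\eqref{eq:consImpulseav} are imposed, the averaged energy law follows automatically from the profile equations \eqref{eq:EKW0}. The idea is to multiply the first equation of \eqref{eq:EKabs1d1} by $\tfrac12\vits_0^2+\chem_0-\tfrac12\sigma\vits_0$ and the second by $\rho_0\vits_0-\tfrac12\sigma\rho_0$, add, and exploit the Euler--Lagrange structure $\Euler_\rho \Lag=-\lambda$ together with the identity \eqref{eq:impulseflux} specialized to the Euler--Korteweg Hamiltonian. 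In doing this carefully one finds that the cross terms involving $B_0$ and $\partial_X$ of $\partial\En/\partial\rho_x$ reorganize into a total $\theta$-derivative plus a total $X$-derivative, and the $\theta$-derivative vanishes upon averaging. The key algebraic lemma needed is the one already used in the proof of Theorem \ref{thm:absstab}: for any $1$-periodic $\bU^0$ solving the profile equation,
$$\partial_X\langle \Ham\,-\,K(\partial_\theta\bU^0)\cdot\nabla_{\bU_x}\Ham\rangle=\langle\bG^0\cdot\partial_X\bU^0\rangle-K\,\partial_X\langle(\partial_\theta\bU^0)\cdot\nabla_{\bU_x}\Ham\rangle,$$
which was established through two integrations by parts at the end of \S\ref{ss:mat}. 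The Lagrangian redundancy (\eqref{eq:consenW} following from \eqref{eq:comp} and \eqref{eq:EKLagW14}) is obtained by the identical argument with $(\rho,\vits,\sigma,\En)$ replaced by $(\vol,\vitsL,-j,\en)$ and the appropriate sign changes dictated by the structure of \eqref{eq:EKabsLagb}. This reduction, together with the derivations above, completes the proof of the proposition.
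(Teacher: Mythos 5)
Your derivation coincides with the paper's: the same two-scale ansatz and averaging of the order-one equations \eqref{eq:EKabs1d1}--\eqref{eq:EKabsLagb1}, with the compatibility conditions \eqref{eq:comp} and with the fourth equations and the additional conservation laws obtained, exactly as in the paper's ``fastest way'', by plugging the ansatz into the exact impulse and energy laws \eqref{eq:consImpulse}, \eqref{eq:consimpulse}, \eqref{eq:consEn}, \eqref{eq:consen} and averaging (the inner-product route you set aside is the paper's alternative, detailed in Appendix~\ref{concrete}). The redundancy of the averaged energy equation that you single out as the ``core check'' is only asserted, not proved, in the paper and is not required to establish the proposition as stated, so the somewhat ad hoc multipliers you propose for it (the natural ones are the components of the variational gradient of the energy, $\tfrac12\vits_0^2+\chem_0$ and $\rho_0\vits_0$, without the $\sigma$-corrections) do not affect the validity of your proof of the proposition itself.
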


Now, we may go further and make the link between the Eulerian \eqref{eq:EKW14} and the Lagrangian \eqref{eq:EKLagW14}
modulated equations. Interestingly, even though it is all but a surprise, 
\eqref{eq:EKLagW14} can be viewed as a Lagrangian reformulation of \eqref{eq:EKW14}.
More precisely, we are going to show the following.

\begin{theorem}
\label{thm:EL}
Away from vacuum, there is a mass Lagrangian system of coordinates changing 
System \eqref{eq:EKW14} into \eqref{eq:EKLagW14}.  In particular, these systems are simultaneously hyperbolic.
A sufficient condition for the hyperbolicity of \eqref{eq:EKW14} and \eqref{eq:EKLagW14} is that 
$$\meane:=\langle \en_0\rangle\,+\,\tfrac{1}{2}\,\langle \vitsL_0^2\rangle\,-\,\tfrac{1}{2}\,\langle \vitsL_0\rangle^2$$ be a strictly convex function of 
$(\meanvol, k, \Delta/k)$, or equivalently that $$\meanE:=\langle\rho_0\rangle \,\meane\,=\,\langle \En_0\rangle\,+\,\frac{1}{2}\langle\rho_0\vits_0^2\rangle
\,-\,\frac{1}{2}\frac{\langle \rho_0\vits_0\rangle^2}{\langle\rho_0\rangle}$$ 
be a strictly convex function of $(\meanrho,K,D/K)$,
 where 
 $$\meanvol:= \langle \vol_0\rangle\,,\quad 
 \Delta:=\langle \vol_0\vitsL_0\rangle \,-\,\langle \vol_0\rangle\,\langle \vitsL_0\rangle\,,$$
 $$\meanrho:=\langle\rho_0\rangle\,,\quad D:= \langle\rho_0\rangle\langle\vits_0\rangle-\langle\rho_0\vits_0\rangle\,.$$
\end{theorem}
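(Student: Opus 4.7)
The plan is to split the argument into three steps: first a macroscopic mass Lagrangian change of coordinates, then an equation-by-equation verification of the equivalence with \eqref{eq:EKLagW14}, and finally the derivation of hyperbolicity from the convexity of $\meane$ via a Godunov-type symmetrization argument that exploits Galilean invariance.

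\textbf{Change of coordinates and equivalence.} Assuming $\meanrho>0$, the first step is to introduce macroscopic Lagrangian coordinates $(S,Y)$ via $S=T$ and $\dif Y = \meanrho\,\dif X - \langle\rho_0\vits_0\rangle\,\dif T$; this one-form is exact by virtue of the averaged mass conservation, that is the second equation of \eqref{eq:EKW14}. It mirrors the microscopic change $\dif y = \rho\,\dif x - \rho \vits \,\dif t$ and, via the averaging identities of the previous subsection, leads to the correspondences $\meanvol = 1/\meanrho$, $\langle\vitsL_0\rangle = \langle\rho_0\vits_0\rangle/\meanrho$, $\langle\vol_0\vitsL_0\rangle = \langle\vits_0\rangle/\meanrho$, together with $K = k\meanrho$ (from $\Y(\Xi)/\Xi = \meanrho$). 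Using $\partial_Y=\meanvol\,\partial_X$ and $\partial_S=\partial_T+\langle\vitsL_0\rangle\,\partial_X$ and the period-change $\dif\zeta = \rho_0\,\dif\xi$ in the averages (whence $\langle\en_0\rangle = \meanvol\langle\En_0\rangle$ since $\En=\rho\en$), one then checks equation by equation that \eqref{eq:EKW14} is pushed forward to \eqref{eq:EKLagW14}, and analogously \eqref{eq:consEnW} to \eqref{eq:consenW}. Being a smooth diffeomorphism, the change preserves the characteristic polynomials of principal symbols, so hyperbolicity is simultaneous.

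\textbf{Reduction via Galilean invariance.} It thus suffices to work with the simpler system \eqref{eq:EKLagW14}. The key observation is that it has a Galilean symmetry $\langle\vitsL_0\rangle\mapsto\langle\vitsL_0\rangle+c$, $\langle\vol_0\vitsL_0\rangle\mapsto\langle\vol_0\vitsL_0\rangle+c\meanvol$ under which $k$, $\meanvol$ and $\Delta$ are invariant; in particular the fluxes $jk$, $\langle p_0\rangle$ and the Galilean-invariant part of the impulse flux depend only on $(k,\meanvol,\Delta)$. In the adapted variables $(k,\meanvol,\Delta/k,\langle\vitsL_0\rangle)$, the principal symbol $A$ of the system is then a matrix whose entries depend only on the first three variables, and whose column corresponding to $\langle\vitsL_0\rangle$ is sparse, its only nonzero entry coming from $\partial_S\meanvol=\partial_Y\langle\vitsL_0\rangle$. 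Expanding $\det(A-\mu I)$ along that column factorizes the characteristic polynomial, reducing the problem to the hyperbolicity of a $3\times 3$ system on Galilean-invariant variables plus one simple additional real characteristic.

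\textbf{Entropy for the reduced system and conclusion.} A direct computation, combining \eqref{eq:consenW} with the velocity equation of \eqref{eq:EKLagW14} and using $\meane = \langle\en_0\rangle+\tfrac12\langle\vitsL_0^2\rangle-\tfrac12\langle\vitsL_0\rangle^2$, shows that $\meane$ is an entropy for the reduced $3\times 3$ system, whose flux depends only on $(k,\meanvol,\Delta/k)$. Strict convexity of $\meane$ in those variables, together with Godunov's symmetrization theorem, then yields the hyperbolicity of the reduced system, and hence of the full $4\times 4$ system \eqref{eq:EKLagW14}. The ``equivalently'' clause follows from the identities $\meanE=\meanrho\meane$, $\meanvol=1/\meanrho$, $k=K\meanvol$, $\Delta/k=(D/K)\meanvol$: this amounts to a perspective-type rescaling under which strict convexity is preserved. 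The main obstacle I expect is the algebraic analysis of the factorization of $\det(A-\mu I)$ and the precise identification of $(\meanvol,k,\Delta/k)$ as the variables in which $\meane$ symmetrizes the reduced $3\times 3$ system, the factor $1/k$ reflecting the natural role of $1/k$ as a local period in the Whitham framework.
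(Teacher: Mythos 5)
Your first step is essentially the paper's own argument: the change of coordinates $\dif Y=\meanrho\,\dif X-\langle\rho_0\vits_0\rangle\,\dif T$, $S=T$, made exact by the averaged mass equation, and the transfer of \eqref{eq:EKW14} into \eqref{eq:EKLagW14} equation by equation (the paper phrases it with exact differential forms rather than the chain rule). Be aware, though, that matching the fluxes of the last two equations is not a consequence of the chain rule and the averaging identities alone: one must use the profile equations to reduce the averaged fluxes (this is the paper's passage to the simplified forms \eqref{eq:EKW34}--\eqref{eq:EKLagW34}), and the correspondence swaps the momentum and velocity equations. Your ``equivalently'' clause via a perspective-type rescaling is also exactly the paper's argument.

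The genuine gap is in your proof that convexity of $\meane$ implies hyperbolicity, a statement the paper does not prove but imports from Gavrilyuk and Serre \cite{GavrilyukSerre}. Your Galilean reduction does not work. In the quasilinear form of \eqref{eq:EKLagW14th} in the variables $(k,\meanvol,\Delta,\meanvitsL)$, the column associated with $\meanvitsL$ does have a single nonzero off-diagonal entry (from $\partial_S\meanvol=\partial_Y\meanvitsL$), but in $\det(A-\mu I)$ that column also carries the diagonal entry $-\mu$, so expanding along it does not factor the characteristic polynomial into a $3\times3$ block on invariant variables times a linear factor; correspondingly, your ``reduced $3\times3$ system'' is not closed, since the $\meanvol$-equation involves $\partial_Y\meanvitsL$. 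The coupling $\meanvol\leftrightarrow\meanvitsL$ is precisely what creates the sound-like pair of characteristic speeds, and the $2\times2$ p-system $\partial_S\meanvol-\partial_Y\meanvitsL=0$, $\partial_S\meanvitsL+\partial_Y\meanpress(\meanvol)=0$ already refutes the scheme: your argument would yield one invariant equation plus one automatically real characteristic, i.e. unconditional hyperbolicity, whereas hyperbolicity fails whenever $\meanpress'(\meanvol)>0$. Galilean invariance in mass Lagrangian coordinates only says that the characteristic speeds are independent of $\meanvitsL$; it does not decouple the system. The route actually invoked by the paper is to Godunov-symmetrize the full $4\times4$ system, using as entropy the averaged total energy $\meane+\tfrac12\meanvitsL^2$ (conserved by \eqref{eq:consenW}) together with the Gibbs relation \eqref{eq:gibbs}; strict convexity of $\meane$ in $(\meanvol,k,\Delta/k)$ is what makes this entropy admissible. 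A self-contained proof would have to carry out that symmetrization (as in \cite{GavrilyukSerre}), not the proposed decoupling.
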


\begin{proof}
Let us recall that $(\Rho,\Vits)$ is a $1/K$-periodic solution of \eqref{eq:profE} if and only if
$(\Vol,\VitsL)$ is a $1/k$-periodic solution of \eqref{eq:profL}, with
$$\Rho(\xi) \Vol(\Y(\xi))\,=\,1\,,\;\Vits(\xi)=\VitsL(\Y(\xi))\,,
\dfrac{\dif \Y}{\dif \xi}\,=\,\Rho\,=\,\dfrac{1}{\Vol(\Y)}\,, \quad \frac{1}{k}=\Y\Big(\frac{1}{K}\Big)\,.$$ 
This implies in particular that 
\begin{equation}\label{eq:linkEL}
\langle \rho_0\rangle = \frac{K}{k}\,,\;\langle \vol_0\rangle = \frac{k}{K}\,,\quad
\langle \vol_0\rangle = \frac{1}{\langle\rho_0\rangle}\,,\quad \langle\vitsL_0\rangle= \frac{\langle \rho_0\vits_0\rangle}{\langle\rho_0\rangle}\,
\,.
\end{equation}
These observations enable us to make the relationship between \eqref{eq:EKW14} and \eqref{eq:EKLagW14} in the same way as it is usually done between the fluid equations in Eulerian coordinates and those in mass Lagrangian coordinates.
As a matter of fact, the second equation in \eqref{eq:EKW14} states that 
$\langle \rho_0\rangle \dif X\,-\,\langle \rho_0\vits_0\rangle \dif T$ is a closed differential form, and thus an exact form in a simply connected domain. We thus introduce the `rescaled mass Lagrangian coordinate' $Y$ defined (up to a constant) by
$$\dif Y\,=\,\langle \rho_0\rangle \dif X\,-\,\langle \rho_0\vits_0\rangle \dif T\,.$$
Setting $S=T$, this equivalently reads thanks to the last two relations in \eqref{eq:linkEL},
$$\dif X\,=\,\langle \vol_0\rangle \dif Y\,+\,\langle \vitsL_0\rangle \dif S\,,$$
hence the second equation in \eqref{eq:EKLagW14}. We can proceed similarly with the other equations.
The first one in \eqref{eq:EKW14} states that 
$$K\,\dif X \,-\,\sigma K\,\dif T\,=\,
\langle\vol_0\rangle K \,\dif Y\,+\,K\,(\langle \vitsL_0\rangle -\sigma )\,\dif S$$
is an exact differential form. Using the second relation in \eqref{eq:linkEL} and that 
$\langle \vitsL_0\rangle -\sigma = j \langle \vol_0\rangle$, we get that 
$k\dif Y + jk\dif S$ is exact, hence the 
 first equation in \eqref{eq:EKLagW14}.
 As regards the other equations, the third one in  \eqref{eq:EKW14} gives the fourth one in  \eqref{eq:EKLagW14},
 and the fourth one in \eqref{eq:EKW14} gives the third one in  \eqref{eq:EKLagW14}
 (this interplay between momentum and velocity equations is well-known for standard fluids motion, in which the conservation law of the momentum $\rho \vits$ in Eulerian coordinates is associated with a conservation law for the velocity in Lagrangian coordinates, the other way round going from a conservation law for $\vol \vits$ in Lagrangian coordinates to a conservation law for $\vits$ in Eulerian coordinates being less classical but still true, as far as smooth solutions are concerned). In order to justify the correspondence, it is convenient to rewrite these equations in a simpler way. This is done by using the profile equations, which give
$$\rho_0(\vits_0-\sigma)=j\,,\;\chem_0\,=\,\mu\,-\,\frac{j^2}{2\rho_0^2}\,,\;K\,(\partial_\theta \rho_0)\,\dfrac{\partial \En}{\partial \rho_x}(\rho_0,K\partial_\theta \rho_0)\,-\,\En_0\,=\,-\,\frac{j^2}{2\rho_0}\,-\,\mu\,\rho_0\,-\,\lambda\,,$$
$$ \vitsL_0-j\vol_0=\sigma\,,\;
\press_0\,=\,-\lambda\,-\,j^2\,\vol_0\,,\;\en_0 \,-\,k\,(\partial_\theta \vol_0)\,\dfrac{\partial \en}{\partial \vol_y}(\vol_0,k\partial_\theta\vol_0)\,=\,\tfrac{1}{2}\,j^2\vol_0^2\,+\,\lambda\,\vol_0\,+\,\mu\,,
$$
so that the last two equations in \eqref{eq:EKW14} and  \eqref{eq:EKLagW14} respectively read
\begin{equation}\label{eq:EKW34}
\left\{\begin{array}{l}
\partial_T \langle\vits_0\rangle + \partial_X
\left(\mu -\,\tfrac{1}{2}\,\sigma^2\,+\sigma \langle \vits_0\rangle \right)\,=\,0\,, \\ [5pt]
\partial_T\langle \rho_0\,\vits_0\rangle \,+\,\partial_X
\left(\left\langle \rho_0\,\vits_0^2\right\rangle\,-\,\lambda\,-\,j^2\,\left\langle \dfrac{1}{\rho_0}\right\rangle\right) \,=\,0\,,
\end{array}\right.
\end{equation}
\begin{equation}\label{eq:EKLagW34}\left\{\begin{array}{l}
\partial_S \langle{\vitsL}_0\rangle \,-\, \partial_Y(\lambda\,+\,j^2\,\langle\vol_0 \rangle)\,=\,0\,, \\ [5pt]
\partial_S\langle \vol_0\,\vitsL_0\rangle \,+\,\partial_Y
\left(\mu\,-\,\tfrac{1}{2}\sigma^2\,-\,j\,\sigma\,\langle\vol_0\rangle\,-\,j^2\,\langle\vol_0^2\rangle\right)
 \,=\,0\,.
\end{array}\right.
\end{equation}
The first equation in \eqref{eq:EKW34} means that 
$$\langle \vits_0\rangle \dif X\,+\,\left(\tfrac{1}{2}\,\sigma^2\,-\,\mu -\,\sigma \langle \vits_0\rangle \right)\,\dif T=
\langle \vol_0\vitsL_0\rangle \dif Y\,+\,\left(\tfrac{1}{2}\,\sigma^2\,-\,\mu\,+\,(\langle \vitsL_0\rangle\, -\,\sigma) \,\frac{\langle \vol_0\vitsL_0\rangle}{\langle\vol_0\rangle} \right)\,\dif S$$
$$=\langle \vol_0\vitsL_0\rangle \dif Y\,+\,\left(\tfrac{1}{2}\,\sigma^2\,-\,\mu\,+\,j\,\sigma\,\langle\vol_0\rangle\,+\,j^2\,\langle\vol_0^2\rangle\right)\,\dif S $$ is exact, which is equivalent to the second conservation law in \eqref{eq:EKLagW34}.
The second equation in  \eqref{eq:EKW34} means that 
$$\langle \rho_0\,\vits_0\rangle\,\dif X \,+\,
\left(-\,\left\langle \rho_0\,\vits_0^2\right\rangle\,+\,\lambda\,+\,j^2\,\left\langle \dfrac{1}{\rho_0}\right\rangle\right)\,\dif T\,=\,$$
$$\langle \vitsL_0\rangle \,\dif Y\,+\,\left(-\,\left\langle \rho_0\,\vits_0^2\right\rangle\,+\,\lambda\,+\,j^2\,\left\langle \dfrac{1}{\rho_0}\right\rangle\right)\,\dif S
=\langle \vitsL_0\rangle \,\dif Y\,+\,(\lambda\,+\,j^2\,\langle \vol_0\rangle^2)\,\dif S
$$
is exact, which is equivalent to the first conservation law in \eqref{eq:EKW34}.
This finishes to prove the equivalence between the two modulated systems \eqref{eq:EKW14} and \eqref{eq:EKLagW14}
(as long as $\langle \rho_0\rangle$ and $\langle \vol_0\rangle$ do not vanish). As a consequence, these two first order systems of conservation laws are simultaneously hyperbolic. A sufficient condition for the hyperbolicity of  \eqref{eq:EKLagW14} has been pointed out by Gavrilyuk and Serre \cite{GavrilyukSerre} in terms of the average energy
$$\meane:=\langle \en_0\rangle\,+\,\tfrac{1}{2}\,\langle \vitsL_0^2\rangle\,-\,\tfrac{1}{2}\,\langle \vitsL_0\rangle^2\,,$$ 
which satisfies the generalized \emph{Gibbs relation}
\begin{equation}\label{eq:gibbs}
\dif \meane \,=\,-\,\meanpress \dif \meanvol\,+\,\Theta\,\dif k\,+\,j\,\dif \Delta\,,
\end{equation}
with $\meanpress:=\langle \press_0\rangle$, $\meanvol:=\langle \vol_0\rangle$, and
\begin{equation}\label{def:ThetaDelta}
\Theta:=\left\langle(\partial_\theta\vol_0)\,\dfrac{\partial \en}{\partial \vol_y}(\vol_0,k\partial_\theta\vol_0)\right\rangle\,,
\quad \Delta:=\langle \vol_0\vitsL_0\rangle \,-\,\langle \vol_0\rangle\,\langle \vitsL_0\rangle\,.
\end{equation}
Namely, it is shown in  \cite{GavrilyukSerre} (alternatively, the reader might take a look at \cite{SBG-DIE})  that, if $\meane$ is a strictly convex function of 
$(\meanvol, k, \Delta/k)$ then \eqref{eq:EKLagW14} is Godunov-symmetrizable and thus hyperbolic.
We may note in addition that, by a standard argument on convex functions (viewed as supremum envelopes of affine functions), the convexity of $\meane$ as a function of 
$(\meanvol, k, \Delta/k)$ is equivalent to the convexity of $\langle\rho_0\rangle \,\meane$ as a function of
$(\langle\rho_0\rangle, k\langle\rho_0\rangle, \langle\rho_0\rangle\Delta/k)$. An easy calculation (using relations in 
\eqref{eq:linkEL} and similar ones) shows that
$$\langle\rho_0\rangle \,\meane\,=\,\langle \En_0\rangle\,+\,\frac{1}{2}\langle\rho_0\vits_0^2\rangle
\,-\,\frac{1}{2}\frac{\langle \rho_0\vits_0\rangle^2}{\langle\rho_0\rangle}\,=:\,\meanE\,,
\;k\langle\rho_0\rangle\,=\,K\,,\;\langle\rho_0\rangle\,\frac{\Delta}{k}\,=\,\frac{1}{K}\,(\langle\rho_0\rangle\langle\vits_0\rangle-\langle\rho_0\vits_0\rangle)\,.$$
This shows that a sufficient condition for the hyperbolicity of \eqref{eq:EKW14} is the strict convexity of 
$\meanE$ as a function of $(\meanrho,K,D/K)$ where $\meanrho:=\langle\rho_0\rangle$, 
$ D:= \langle\rho_0\rangle\langle\vits_0\rangle-\langle\rho_0\vits_0\rangle$.
Of course our generalised Gibbs relation has its counterpart in terms of $\meanE$. It reads
\begin{equation}\label{eq:Gibbs}
\dif \meanE \,=\,\meanchem \dif \meanrho\,+\,\Theta\,\dif K\,+\,\frac{j}{\meanrho}\,\dif D\,,
\end{equation}
where 
\begin{equation}\label{eq:defmeanchem}
\meanchem\,:=\,\langle \chem_0\rangle \,+\,\frac{j^2}{2}\,\left(\left\langle \frac{1}{\rho_0^2}\right\rangle\,-\,\frac{1}{\meanrho^2}\right)\,-\,\frac{j\,D}{\meanrho^2}\,.
\end{equation}
Equation \eqref{eq:Gibbs} can be derived from \eqref{eq:gibbs} as follows. Using that 
$$\meanE= \meanrho\, \meane\,,\;\meanvol=1/\meanrho\,,\;K=k\meanrho\,,\;D\,=\,\meanrho^2\Delta\,,$$
we readily get that 
$$\dif \meanE\,=\,\left(\meane\,+\,\meanpress\,\meanvol\,-\,\frac{K\,\Theta}{\meanrho}\,-\,2\,\frac{j\,D}{\meanrho^2}\right)\,\dif \meanrho\,+\,\Theta\,\dif K\,+\,\frac{j}{\meanrho}\,\dif D\,.$$
It just remains to express the factor of $\dif \meanrho$ in terms of `Eulerian' mean values. Indeed, we can see the `temperature' $\Theta$ either as a `Lagrangian' mean value (by definition, see Eq.~\eqref{def:ThetaDelta}) or as an `Eulerian' one: it turns out that 
$$\Theta\,=\,\left\langle(\partial_\theta \rho_0)\,\dfrac{\partial \En}{\partial \rho_x}(\rho_0,K\partial_\theta \rho_0)\right\rangle\,.$$
As a matter fact, by the Eulerian profile equation we have
$$K\,\left\langle(\partial_\theta \rho_0)\,\dfrac{\partial \En}{\partial \rho_x}(\rho_0,K\partial_\theta \rho_0)\right\rangle\,=\,
\left\langle\En_0\,-\,\frac{j^2}{2\rho_0}\,-\,\mu\,\rho_0\,-\,\lambda\right\rangle\,=\,
\meanrho \,\left\langle\en_0\,-\,\frac{j^2\vol_0^2}{2}\,-\,\mu\,-\,\lambda\,\vol_0\right\rangle$$
by the relationship between Eulerian mean values and Lagrangian mean values we already used several times, hence by the Lagrangian profile equation
$$K\,\left\langle(\partial_\theta \rho_0)\,\dfrac{\partial \En}{\partial \rho_x}(\rho_0,K\partial_\theta \rho_0)\right\rangle\,=\,
\meanrho \,k\,\left\langle(\partial_\theta\vol_0)\,\dfrac{\partial \en}{\partial \vol_y}(\vol_0,k\partial_\theta\vol_0)\right\rangle\,=\,K\,\Theta\,.$$
We can now work out the factor of $\dif \meanrho$ by observing that 
$$j\,\Delta\,=\,\langle  \vitsL_0 j (\vol_0-\meanvol)\rangle\,=\,\langle  \vitsL_0 (\vitsL_0-\langle\vitsL_0\rangle)\rangle\,=\,\langle  \vitsL_0^2\rangle\,-\,\langle  \vitsL_0\rangle^2\,,$$ 
so that by definition
$$\meane\,=\,\langle \en_0\rangle\,+\,\frac{j\Delta}{2}\,.$$
In addition we have that 
$$\meanpress\,\meanvol\,=\,\langle \press_0\,\vol_0\rangle\,+\,j\Delta$$
since
$$\langle \press_0\,\vol_0\rangle\,=\,\langle (\meanpress \,-\,j^2\,(\vol_0\,-\,\meanvol))\,\vol_0\rangle\,=\,
\meanpress\,\meanvol\,-\,\langle j^2 (\vol_0\,-\,\meanvol)^2\rangle\,=\,\meanpress\,\meanvol\,-\, \langle(\vitsL_0\,-\langle\vitsL_0\rangle)^2\rangle\,=\,\meanpress\,\meanvol\,-\,j\Delta\,.
$$
Therefore,
$$\meane\,+\,\meanpress\,\meanvol\,=\,\langle \en_0\,+\,\press_0\,\vol_0\rangle\,+\,\frac{3j\Delta}{2}\,,$$
and by our usual trick,
$$\langle \en_0\,+\,\press_0\,\vol_0\rangle\,=\,\frac{1}{\meanrho}\,\langle \En_0\,+\,\press_0\rangle\,,$$
where $\press_0$ has to be expressed in Eulerian coordinates in the right-hand side. This amounts to writing
$$\langle \En_0\,+\,\press_0\rangle\,=\,\left\langle \En_0\,-\,\lambda\,-\,\frac{j^2}{\rho_0}\right\rangle\,=\,
K\,\Theta\,-\,\frac{j^2}{2}\,\left\langle \frac{1}{\rho_0}\right\rangle\,+\,\mu\,\meanrho=\,K\,\Theta\,-\,\frac{j^2}{2}\,\left\langle \frac{1}{\rho_0}\right\rangle\,+\,\meanrho\,\left\langle\chem_0 \,+\,\frac{j^2}{2\rho_0^2}\right\rangle$$
using once more the profile equations. So we have
$$\meane\,+\,\meanpress\,\meanvol\,-\,\frac{K\,\Theta}{\meanrho}\,=\,\langle \chem_0\rangle\,+\,\frac{j^2}{2}\,\left(\left\langle \frac{1}{\rho_0^2}\right\rangle\,-\,\frac{1}{\meanrho}\left\langle \frac{1}{\rho_0}\right\rangle\right)
 \,+\,\frac{3j\Delta}{2}\,,$$
 and noting that
 $$\frac{j^2}{\meanrho}\left\langle \frac{1}{\rho_0}\right\rangle\,=\,\langle j^2 \vol_0^2\rangle\,=\,
 j^2\,\meanvol^2\,+\,j\,\Delta\,,$$
 we eventually obtain the claimed formula \eqref{eq:Gibbs} with $\meanchem$ defined by \eqref{eq:defmeanchem}.
 \end{proof}

Still, it is not obvious at this stage that \eqref{eq:EKW14} or \eqref{eq:EKLagW14} 
are really evolution systems in closed form. 
We shall observe on numerical experiments that evolutionarity may indeed fail.
For the moment, let us just note that if evolutionarity happens to fail, it does so `\emph{simultaneously}' for  \eqref{eq:EKW14} and \eqref{eq:EKLagW14}.
As a matter of fact, at fixed $(T,X)$, we know that 
$$(\rho_0,\vits_0)(T,X,\theta)=(\Rho,\Vits)(T,X,\theta/K(T,X))\,$$
where the profile
$(\Rho,\Vits)$ is a $1/K$-periodic solution to
\begin{equation}\label{eq:profE}\left\{\begin{array}{l}\Rho\,(\,\Vits\,-\,\sigma)\,\equiv\,j\,,\\ [10pt]
\Rho_\xi\,\dfrac{\partial \En }{\partial \rho_x}(\Rho,\Rho_\xi)\,-\,\En(\Rho,\Rho_\xi)\,+\,\dfrac{j^2}{2\Rho}\,+\,\mu\,\Rho\,\equiv\,-\,\lambda\,.\end{array}\right.
\end{equation}
Here above, the sign $\equiv$ means equalities for functions of $(T,X)$ only.
The quadruplet $(j,\sigma,\lambda,\mu)$ is a natural set of parameters in \eqref{eq:profE}, which we expect to determine properly 
 the wave number $K$ as well as the $1/K$-periodic solution $(\Rho,\Vits)$ to \eqref{eq:profE}, up to translations, hence also all mean values involved in 
\eqref{eq:EKW14}~\eqref{eq:consEnW}.
Similarly,
$$(\vol_0,\vitsL_0)(S,Y,\theta)=(\Vol,\VitsL)(S,Y,\theta/k(S,Y))\,$$
and all mean values involved in \eqref{eq:EKLagW14}~\eqref{eq:consenW}
are expected to be fully determined by $(j,\sigma,\lambda,\mu)$
through the profile equations
\begin{equation}\label{eq:profL}\left\{\begin{array}{l}\VitsL\,-\,j\,\Vol\,\equiv\,\sigma\,,\\ [10pt]
\Vol_\zeta\,\dfrac{\partial \en }{\partial \vol_y}(\Vol,\Vol_\zeta)\,-\,\en(\Vol,\Vol_\zeta)\,+\,\dfrac{j^2\Vol^2}{2}\,+\,\lambda\,\Vol\,\equiv\,-\,\mu\,.\end{array}\right.
\end{equation}
By the one-to-one correspondence we have pointed out between Eulerian and Lagrangian periodic orbits,
the mapping $(j,\sigma,\lambda,\mu)\mapsto (k,\langle \Vol\rangle, \langle \VitsL\rangle, \langle \Vol\VitsL\rangle)$ will be locally invertible if and only if $(j,\sigma,\lambda,\mu)\mapsto (K,\langle \Rho\rangle, \langle \Vits\rangle, \langle \Rho\Vits\rangle)$ is so.
Again, this is not exactly always the case, as we shall see on specific examples in the next section.

\section{Nature of modulated equations in practice}\label{s:num}

Our purpose here is to investigate the hyperbolicity of Whitham's modulated equations for the Euler--Korteweg system.
First of all, let us point out that the parameter $\sigma$ does not play any role in that matter.  This is due to Galilean invariance of the Euler--Korteweg system,  unsurprisingly. To be more precise, let us rewrite the modulated equations in natural coordinates from the `thermodynamical' point of view. In the Lagrangian framework they read
\begin{equation}\label{eq:EKLagW14th}\left\{\begin{array}{l}
\partial_S k \,-\,\partial_Y (jk)\,=\,0\,,\\ [5pt]
\partial_S\meanvol \,-\,\partial_Y\meanvitsL\,=\,0\,,\\ [5pt]
\partial_S \meanvitsL  \,+\, \partial_Y\meanpress\,=\,0\,, \\ [5pt]
\partial_S(\Delta+\meanvol\meanvitsL) \,+\,\partial_Y\left(\meane + \meanvol\meanpress- k\Theta -\frac{1}{2} {\meanvitsL}^2 -2 j\Delta\right) \,=\,0\,,
\end{array}\right.
\end{equation}
together with the generalized Gibbs relation \eqref{eq:gibbs} $\dif \meane \,=\,-\,\meanpress \dif \meanvol\,+\,\Theta\,\dif k\,+\,j\,\dif \Delta$. We easily check that \eqref{eq:EKLagW14th} is invariant by translations of the form $$(S,Y,k,\meanvol,\meanvitsL,\Delta)\mapsto
(S,Y,k,\meanvol,\meanvitsL-\underline{\sigma},\Delta)$$ for any (constant) velocity $\underline{\sigma}$.
Substituting $\sigma+j\meanvol$ for $\meanvitsL$, we thus see that the equations  in \eqref{eq:EKLagW14th} are unchanged if
$\sigma$ is replaced by $\sigma-\underline{\sigma}$. Therefore, the eigenvalues of the linearized equations about a reference `state'
$(\underline{k},\underline{\meanvol},\underline{\sigma},\underline{\Delta})$, and those about the translated one $(\underline{k},\underline{\meanvol},0,\underline{\Delta})$ coincide.
A similar argument works in the Eulerian framework too, see Appendix \ref{s:gal} for more details.

From now on, we concentrate on the Euler--Korteweg system in mass Lagrangian coordinates \eqref{eq:EKabsLagb}, 
with an energy of the form \eqref{eq:enK}. We recall from \eqref{eq:profL}
 that a periodic traveling wave solution $(\vol,\vits)=(\Vol,\VitsL)(y+jt)$ to \eqref{eq:EKabsLagb}-\eqref{eq:enK}
is characterized by a periodic profile $(\Vol,\VitsL)$ that must be solution of
\begin{equation}\label{EKsw_prof}
\displaystyle
\VitsL-j\Vol=\sigma,\quad \tfrac{1}{2}\cap(\Vol)\Vol_\zeta^2+\tfrac{1}{2}{j^2}\Vol^2+\lambda \Vol
-f(\Vol)=-\mu,
\end{equation}
where $\sigma$,
$\lambda$, and $\mu$ are constant of integrations. 

\begin{remark}
The phase portrait of the ODE on $\Vol$ in  \eqref{EKsw_prof} does not depend on the specific capillarity function $\vol\mapsto\cap(\vol)$, provided that it stays positive. In fact, up to a rescaling in $\zeta$,
that ODE reduces to
$$\tfrac{1}{2} \dot\Vol^2\,=\,f(\Vol) \,-\,\tfrac{1}{2}j^2\Vol^2 \,-\,\lambda\,\Vol\,-\,\mu\,.$$
Remarkably enough, this equation is  also the integrated profile equation for traveling wave solutions of speed $-j^2$ to the 
\emph{generalized Korteweg--de Vries} equation (gKdV)
$$\partial_t \vol+ \partial_x \press(\vol)= -\partial_x^3\vol\,.$$
(This relationship between the traveling waves of the Euler--Korteweg equations in Lagrangian coordinates and those of the generalized Korteweg--de Vries equation has been known for a long time, and is investigated in more details for instance in \cite{Hoewing}.)
However, when one turns to specific examples for $\press$ in the generalized Korteweg--de Vries equation, it is most often to consider power laws $\press=\vol^{\gamma}$. By contrast, we consider here laws that go to infinity at zero, or more generally at some \emph{co-volume} $b$, and to zero at infinity.
\end{remark}

Using in particular the first equation in \eqref{EKsw_prof} above to substitute 
$\sigma+j\Vol$ for $\VitsL$ in \eqref{eq:EKLagW14}, and reformulating  the last two equations in  \eqref{eq:EKLagW14} as in 
\eqref{eq:EKLagW34}, we can write the modulated equations as
{\setlength\arraycolsep{1pt}
\begin{eqnarray}
\label{EKsw_WH1}
\displaystyle
\partial_S k-\partial_Y(jk)&=&0,\\
\label{EKsw_WH2}
\displaystyle
\partial_S\langle \Vol \rangle-\partial_Y(j\langle \Vol\rangle+\sigma)&=&0,\\
\label{EKsw_WH3}
\displaystyle
\partial_S(j\langle \Vol\rangle+\sigma)-\partial_Y(\lambda+j^2\langle \Vol\rangle)&=&0,\\
\label{EKsw_WH4}
\displaystyle
\partial_S(\sigma\langle \Vol\rangle+j\langle \Vol^2\rangle)-\partial_Y(\tfrac{1}{2}\sigma^2+j\sigma\langle \Vol\rangle+j^2\langle \Vol^2\rangle-\mu)&=&0.
\end{eqnarray}}
This is just an alternative formulation of \eqref{eq:EKLagW14th} in terms of the `natural' parameters $(\sigma,j,\lambda,\mu)$. 
Our main purpose is to investigate the hyperbolicity of (\ref{EKsw_WH1}-\ref{EKsw_WH4}), which by Theorem \ref{thm:absstab} is a necessary condition for stability of the periodic wave, provided that nearby periodic waves be parametrized by 
$(k,\langle \Vol \rangle,j\langle \Vol\rangle+\sigma,\sigma\langle \Vol\rangle+j\langle \Vol^2\rangle)$. We can hardly check these properties -~evolutionarity and hyperbolicity~- analytically, since neither the wave number $k$ nor the wave profile $\Vol$ is known explicitly in terms of  
$(\sigma,j,\lambda,\mu)$. However, it is not difficult to check them numerically. To make numerical computations more transparent,  we are going to change 
$(\lambda,\mu)$ for more convenient parameters, under suitable assumptions on the pressure law $\vol\mapsto \press(\vol)=-f'(\vol)$. In this respect, we shall start with shallow water type pressure laws, and turn to the more complicated, Van der Waals type pressure laws afterward. 
We shall see that the form of the capillarity $\cap(\vol)$ also plays a role -~if a rescaling along orbits does not change the phase portrait, it does change mean values. Our minimal assumption will be that $\cap:(b,+\infty)  \to  (0,+\infty)$ is a smooth function for some nonnegative `co-volume' $b$.

Before going into specific examples, let us say a few words about two asymptotic limits, namely the small amplitude regime, and the solitary wave limit.
The former has been analyzed in some detail in \ref{ss:small}. In particular, it has been pointed out that a necessary condition for the hyperbolicity of modulated equations about a small amplitude periodic wave for the Euler--Korteweg system is that the Euler equations be hyperbolic at the mean value of this wave.
This is why we shall not try to get too close to center points of the wave profile equations in the numerical computations that follow: we readily know that the small amplitude wave trains about those points where the Euler equations are not hyperbolic are unstable.
(As to the other necessary condition, namely the positivity of  $\omega_2\,\omega_{0,kk}$, we leave it aside for the moment.)
The solitary wave limit corresponds to when the wavenumber $k$ tends to zero. As noticed in earlier work (see for instance \cite{El1,El2}) for which explicit computations can be made~--~involving elliptic integrals~--~, we expect modulational instability for waves of small wavenumber when the endpoint of the limiting solitary wave is an unstable state of the Euler equations. Thus we shall not try, in numerical computations, to get too close to solitary wave orbits either.

\subsection{Shallow water type pressure laws}

\subsubsection{Parametrization of periodic waves}

Here we assume that 
$\press:(b,+\infty)  \to  (0,+\infty)$ is smooth and \emph{strictly convex}, with
$$\lim_{\vol \searrow b} \press(\vol)=+\infty\,,\quad \lim_{\vol \to +\infty} \press(\vol)=0\,,$$
hence in particular $\press$ is monotonically decaying to zero at infinity.
The shallow water case $\press(\vol)=1/\vol^2$ is the main application we have in mind.
If $\vol_\infty$ is to denote the endpoint of a solitary wave with $j\neq 0$ and $\lambda$ as constants of integration, it must satisfy
$$
j^2< -p'(\vol_\infty)\,,\quad \lambda=-j^2 \vol_\infty-\press(\vol_\infty)\,,
$$
in which case there is exactly one other solution $\vol_0\in (\vol_\infty,+\infty)$ to $\lambda=-j^2 \vol-\press(\vol)\,.$
Now, inside the homoclinic loop connecting $\vol_\infty$ to itself, there is a collection of periodic orbits surrounding $\vol_0$, which are determined for instance by their trough $\vol_*$  (see Figure \ref{fig:convexportrait}). More precisely, if $\vol_*\in (\vol_\infty,\vol_0)$ there is a unique periodic orbit passing through $\vol_*$ and
solving the ODE 
$$\tfrac{1}{2}\cap(\Vol)\Vol_\zeta^2+\tfrac{1}{2}{j^2}\Vol^2+\lambda \Vol
-f(\Vol)=-\mu,\quad \mu:=f(\vol_*)\,-\,\tfrac{1}{2}{j^2}\vol_*^2-\lambda \vol_*\,.$$

\begin{figure}[ht]\label{fig:convexportrait}
\begin{center}
\includegraphics[height=9cm]{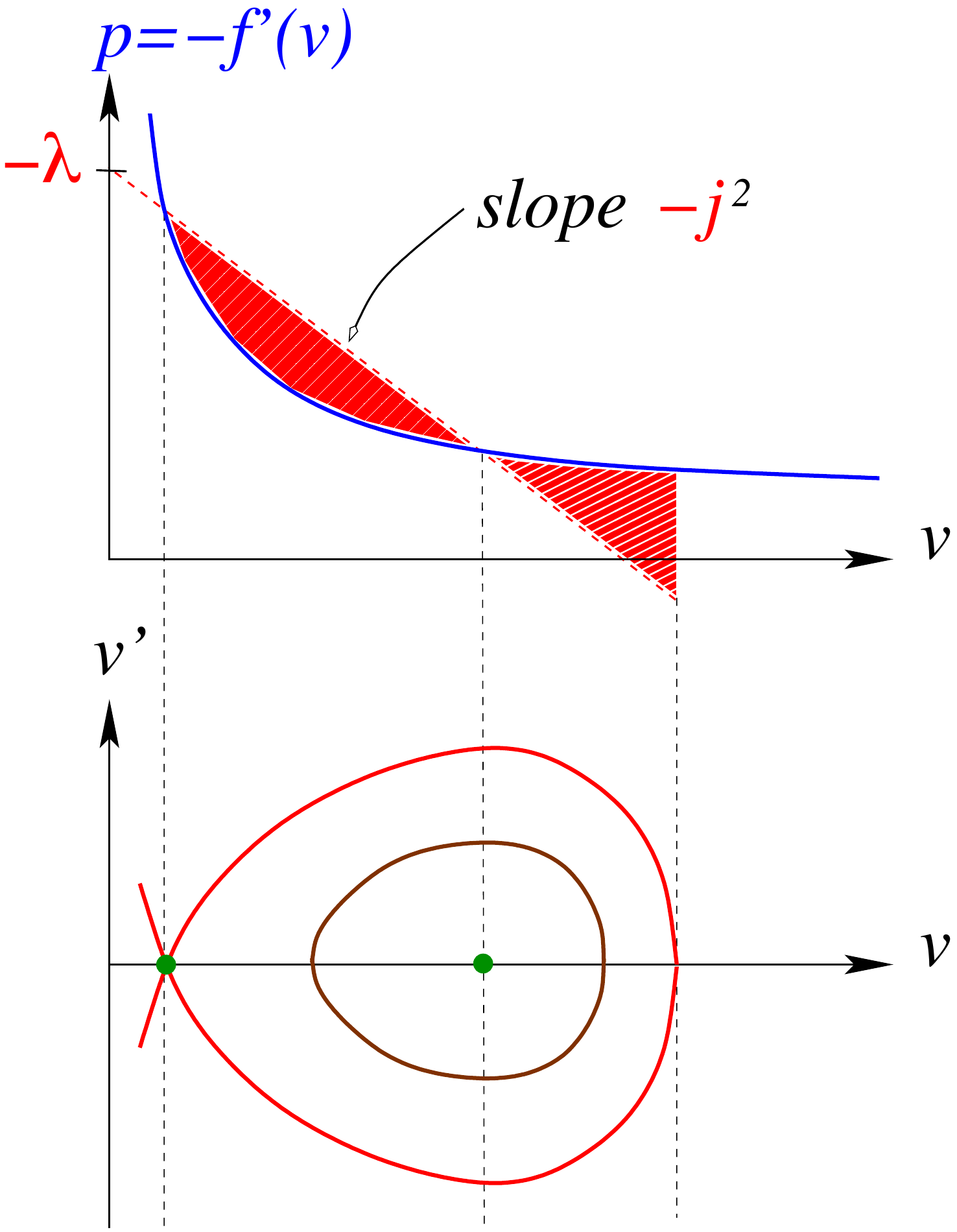}
\end{center}
\caption{Phase portrait for convex pressure law}
\end{figure}
\noindent
Let us consider the mapping
{\setlength\arraycolsep{1pt}
\begin{eqnarray*}
\displaystyle
\Phi : \Omega\subset\mathbb{R}^3& \to &\mathbb{R}^3\\
\displaystyle
(j,\vol_\infty, \vol_*)&\mapsto & (j,\:\lambda=-j^2 \vol_\infty-\press(\vol_\infty),\:\mu=f(\vol_*)\,-\,\tfrac{1}{2}{j^2}\vol_*^2-\lambda \vol_*),
\end{eqnarray*}}
\noindent
with 
$$
\displaystyle
\Omega=\left\{(j,v_\infty,\vol_*)\:|\: 
0<j^2< -p'(\vol_\infty),\quad 
b<\vol_\infty<\vol_*<\vol_0\,;\;j^2 \vol_\infty+\press(\vol_\infty)=j^2 \vol_0+\press(\vol_0)\;\right\}\,.
$$
It is a diffeomorphism onto the open subset $\Lambda$ of $\mathbb{R}^3$ made of parameters $(j,\lambda,\mu)$  for which we do have a periodic wave. From now on, we parametrize  periodic waves by $(j,\vol_\infty, \vol_*)\in \Omega$ instead of 
$(j,\lambda,\mu)\in \Lambda$. Given $(j,\vol_\infty, \vol_*)\in \Omega$ and $(j,\lambda,\mu)=\Phi(j,\vol_\infty, \vol_*)$ there is a unique 
$\vol^*
$ (the peak of the periodic wave) greater than $\vol_*$ such that
$$f(\vol^*)\,-\,\tfrac{1}{2}{j^2}(\vol^*)^2-\lambda \vol^*=\mu=f(\vol_*)\,-\,\tfrac{1}{2}{j^2}\vol_*^2-\lambda \vol_*\,.$$
The wave number $k$ of the corresponding periodic wave is given by
$$\frac{1}{2k}\,=\,\int_{\vol_*}^{\vol^*} \sqrt{\frac{\cap(\vol)}{2(f(\vol)\,-\,\tfrac{1}{2}{j^2}\vol^2-\lambda \vol-\mu)}}\,\dif\vol\,,$$
and mean values can be computed by quadrature in a similar way. In particular, we have
$$\langle\Vol\rangle\,=\,2k\,\int_{\vol_*}^{\vol^*} \vol\,\sqrt{\frac{\cap(\vol)}{2(f(\vol)\,-\,\tfrac{1}{2}{j^2}\vol^2-\lambda \vol-\mu)}}\,\dif\vol\,,$$
$$\langle\Vol^2\rangle\,=\,2k\,\int_{\vol_*}^{\vol^*} \vol^2\,\sqrt{\frac{\cap(\vol)}{2(f(\vol)\,-\,\tfrac{1}{2}{j^2}\vol^2-\lambda \vol-\mu)}}\,\dif\vol\,.$$
An elegant way to remove (integrable) singularities at endpoints in the integrals above has been pointed out in \cite{BronskiJohnson}. 
Indeed, factorizing the denominator as 
$$f(\vol)\,-\,\tfrac{1}{2}{j^2}\vol^2-\lambda \vol-\mu\,=\,(\vol^*-\vol)\,(\vol-\vol_*)\,\varphi(\vol;j,\vol_\infty, \vol_*)\,,$$
and introducing the change of variables
$${\mathscr V}: (\theta;j,\vol_\infty, \vol_*)\mapsto v:=\frac{\vol^*+\vol_*}{2}\,+\,\frac{\vol^*-\vol_*}{2}\,\sin \theta\,,$$
we find that
$$\frac{1}{2k}\,=\,\int_{-\pi/2}^{\pi/2} \sqrt{\frac{\cap({\mathscr V}(\theta;j,\vol_\infty, \vol_*))}{2\,\varphi({\mathscr V}(\theta;j,\vol_\infty, \vol_*);j,\vol_\infty, \vol_*)}}\,\dif\theta\,,$$
$$\langle\Vol\rangle\,=\,2k\,\int_{-\pi/2}^{\pi/2} {\mathscr V}(\theta;j,\vol_\infty, \vol_*)\,\sqrt{\frac{\cap({\mathscr V}(\theta;j,\vol_\infty, \vol_*))}{2\,\varphi({\mathscr V}(\theta;j,\vol_\infty, \vol_*);j,\vol_\infty, \vol_*)}}\,\dif\theta\,,$$
$$\langle\Vol^2\rangle\,=\,2k\,\int_{-\pi/2}^{\pi/2} {\mathscr V}(\theta;j,\vol_\infty, \vol_*)^2\,\sqrt{\frac{\cap({\mathscr V}(\theta;j,\vol_\infty, \vol_*))}{2\,\varphi({\mathscr V}(\theta;j,\vol_\infty, \vol_*);j,\vol_\infty, \vol_*)}}\,\dif\theta\,.$$
Note that in the shallow water case $f(\vol)=1/\vol$,
$$\vol\,(-f(\vol)\,+\,\tfrac{1}{2}{j^2}\vol^2+\lambda \vol+\mu)\,=\,(\vol-\vol_*)\,\left(\tfrac{1}{2}{j^2}\vol^2\,+\,(\tfrac{1}{2}{j^2}\vol_*+\lambda)\,\vol\,+\,\frac{1}{\vol_*}\right)\,,$$
hence $\varphi$ is explicitly given by
$$\varphi(\vol;j,\vol_\infty, \vol_*)\,=\,\frac{j^2}{2\vol}\,(\vol+\vol_{m})\,,\quad \vol_{m}:=\frac{1}{j^2}\,\left(\tfrac{1}{2}{j^2}\vol_*+\lambda+
\sqrt{(\tfrac{1}{2}{j^2}\vol_*+\lambda)^2-2\,{j^2}/{\vol_*}}\right)\,.$$

First of all, we want to check whether \eqref{EKsw_WH1}-\eqref{EKsw_WH4} is an evolutionary system, or in other words if 
the Jacobian matrix $\bM_0$ of 
the mapping $$\bP:=(j,\sigma,\vol_\infty, \vol_*)^T\mapsto \bW(\bP):=(k,\langle\Vol\rangle,\sigma+j\langle V\rangle,j\langle V^2\rangle+\sigma\langle\Vol\rangle)^T$$ is nonsingular. This can be done numerically.
Next, we can rewrite \eqref{EKsw_WH1}-\eqref{EKsw_WH4} in the more compact form
$$\partial_S \bW(\bP) - \partial_Y (j \bW(\bP) + \bF(\bP))=0\,,$$
$$\bF(\bP):= (0,\sigma,\lambda-j\sigma,\tfrac{1}{2}\sigma^2-\mu)^T\,,\quad (j,\lambda,\mu)=\Phi(j,\vol_\infty, \vol_*)\,,$$
from which we easily infer the quasilinear form
\begin{equation}\label{EKsw_WHql}\bM_0(\bP) (\partial_S  - j \partial_Y) \bP + \bM_1(\bP) \partial_Y\bP =0\,,
\end{equation}
with $\bM_1(\bP)\,:=-\,\mbox{Jac } \bF(\bP)\,-\,j\,\bW(\bP)\,(1,0,0,0)$, \emph{i.e.}
$$\bM_1(j,\sigma,v_\infty, v_*)=\left(\begin{array}{cccc} \displaystyle -k & 0 & 0 & 0\\
                                                     \displaystyle -\langle V\rangle & -1 & 0 & 0\\
                                                     \displaystyle -j\langle V\rangle-\frac{\partial \lambda}{\partial j} & j & \displaystyle -\frac{\partial \lambda}{\partial \vol_\infty} & 0\\
                                                     \displaystyle -(\sigma\langle V\rangle+j\langle V^2\rangle)+\frac{\partial \mu}{\partial j} & -\sigma & \displaystyle \frac{\partial \mu}{\partial \vol_\infty} &  \displaystyle \frac{\partial \mu}{\partial \vol_*}
                                                     \end{array}\right)=
                                                     $$
                                                     $$\left(\begin{array}{cccc} \displaystyle -k & 0 & 0 & 0\\
                                                     \displaystyle -\langle V\rangle & -1 & 0 & 0\\
                                                     \displaystyle -j\langle V\rangle+2j v_\infty & j &\displaystyle  j^2+\press'(v_\infty) & 0\\
                                                     \displaystyle -(\sigma\langle V\rangle+j\langle V^2\rangle)+2jv_\infty v_*-j v_*^2 & -\sigma &\displaystyle  (j^2+\press'(v_\infty))v_* & \press(\vol_\infty)-\press(\vol_*)+j^2(\vol_\infty-\vol_*)
                                                     \end{array}\right).
                                                     $$

\noindent
By `change of frame' $Y\mapsto Y+\underline{j}S$, the
hyperbolicity of \eqref{EKsw_WHql} at $\bP=\underline{\bP}$ is equivalent to the hyperbolicity of 
$$\bM_0(\bP) \partial_S \bP + \bM_1(\bP) \partial_Y\bP =0\,$$
at $\bP=\underline{\bP}$.
Once $\bM_0(\bP)$ is known to be nonsingular, a sufficient condition for hyperbolicity is that the eigenvalues of 
 $\bM_0(\bP)^{-1}\bM_1(\bP)$ have four distinct real parts, because then they must be real and distinct. We comment below on a series of numerical results obtained with the shallow water pressure law $\press(\vol)=1/\vol^2$, in which we have computed mean values by using
the trapezoidal rule  
with $10\,000$ points of discretization, and the Jacobian matrices $\bM_0(\bP)$ by means of a finite difference method and a discretization step $h=10^{-6}$. In each picture, we have plotted the real part of the eigenvalues of $\bM_0(\bP)^{-1}\bM_1(\bP)$ as a function of the period $\Xi$: 
the modulated equations are hyperbolic for a given $k$ if we find four distinct real parts. When two curves collide, there is a set of complex conjugate eigenvalue and the system is not hyperbolic any more.

\mathversion{bold}
\subsubsection{Numerical results}
\mathversion{normal}

\noindent
We have checked the hyperbolicity of the modulated equations  in the cases $j=1$ and various values of $v_\infty\in(0.55 ,1.2)$ and in the case $j=4$, $v_\infty\in(0.15, 0.49)$.  Since the hyperbolicity does not depend on  $\sigma$, we have set $\sigma=0$ in $\bM_0$ and $\bM_1$.  In both cases we found a similar scenario. Let us describe the case $j=1$. If $v_\infty\geq v_\infty^1\approx 0.86$, the Whitham equations are hyperbolic for all periodic waves (see figure \ref{ekswj1v09}).

 \begin{figure}[h!]
 \begin{minipage}[c]{.48\linewidth}
   \includegraphics[scale=0.52]{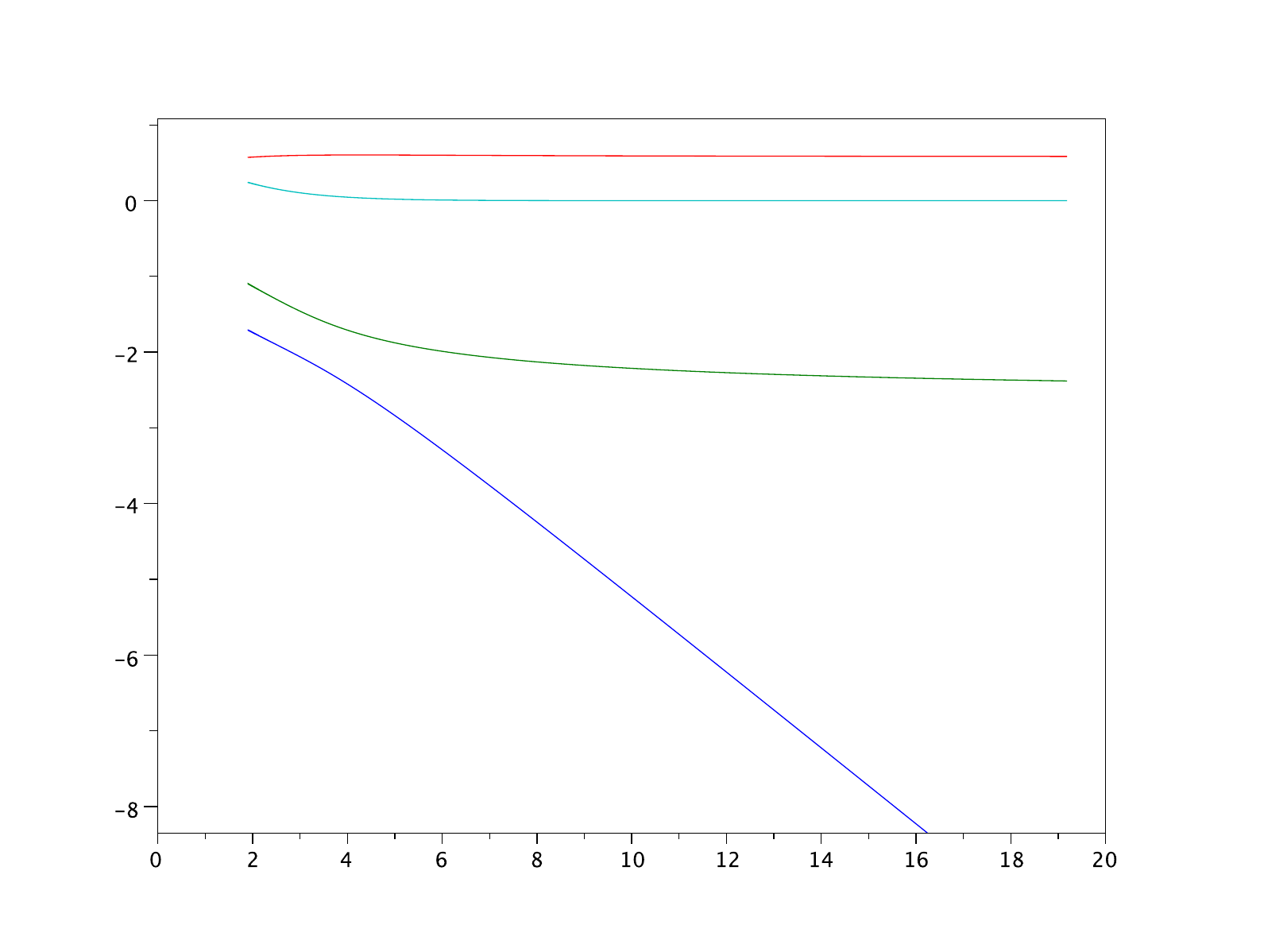}
 \end{minipage} \hfill
   \begin{minipage}[c]{.48\linewidth}
  \includegraphics[scale=0.52]{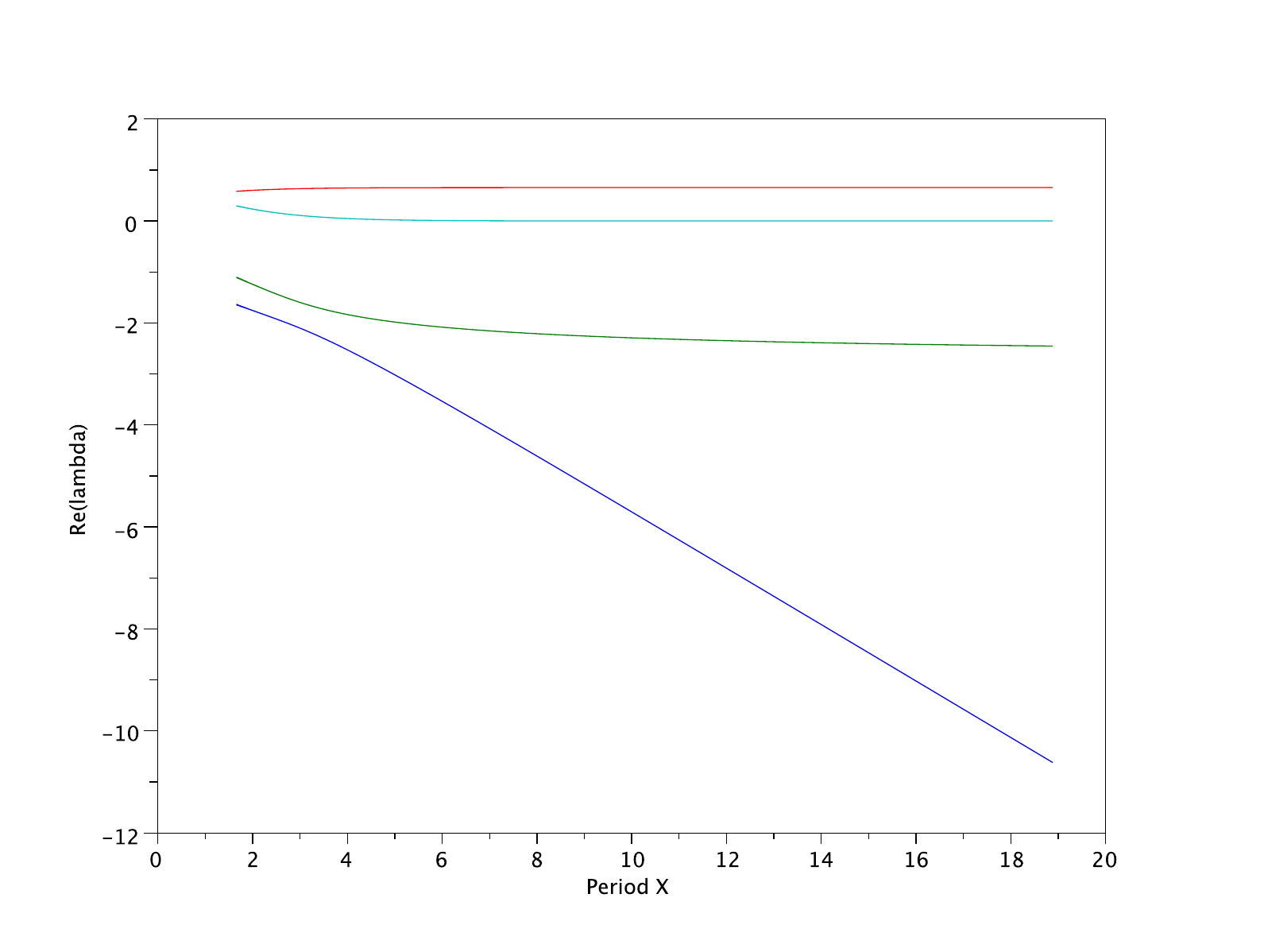}
\end{minipage}
  \caption{\label{ekswj1v09} $\Real(\lambda_i),\:i=1,2,3,4$ as functions of the period $\Xi$. On the left: $j=1$ and $v_\infty=0.93$. On the right: $j=1$ and $v_\infty=0.9$.}
 \end{figure}
 
If $v_\infty\leq v_\infty^1$, one finds a range of periodic  wave periods $\Xi\in (\Xi_m; \Xi_M)$ 
for which the Whitham equations are not hyperbolic, and thus periodic waves are spectrally unstable; see figure \ref{ekswj1v08} (left). Moreover, if $v_\infty\lessapprox 0.58$, there is one eigenvalue that diverges to $\pm\infty$ which means that there  exists $\Xi_c$ such that ${\rm det}(\bM_0)=0$ and the modulation equations are not of evolution type. If $\Xi\neq \Xi_c$, the scenario is similar to the previous case: there is a range of unstable periodic waves; see figure \ref{ekswj1v08} (right).
  \begin{figure}[h!]
 \begin{minipage}[c]{.48\linewidth}
   \includegraphics[scale=0.52]{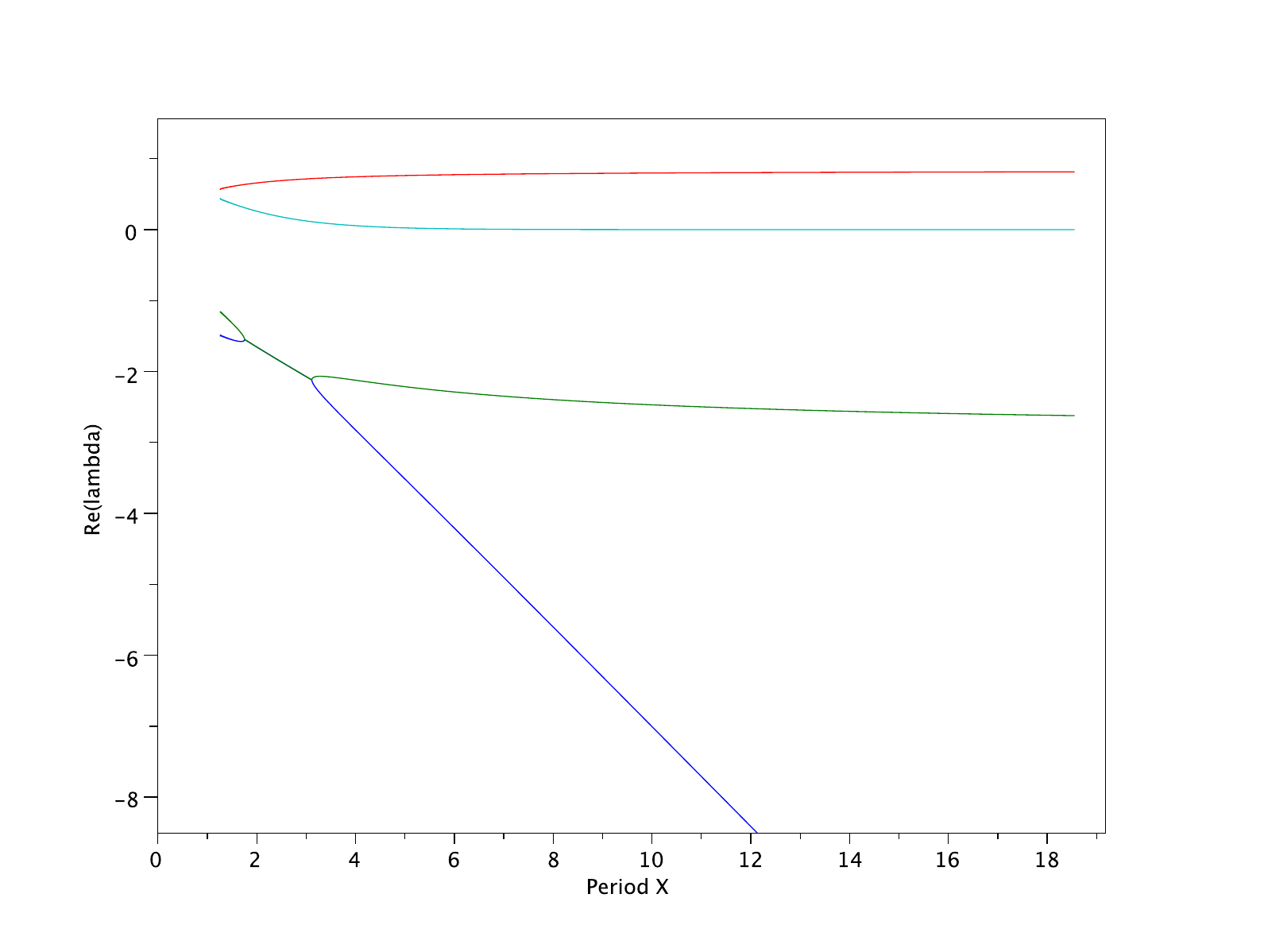}
 \end{minipage} \hfill
  \begin{minipage}[c]{.48\linewidth}
  \includegraphics[scale=0.52]{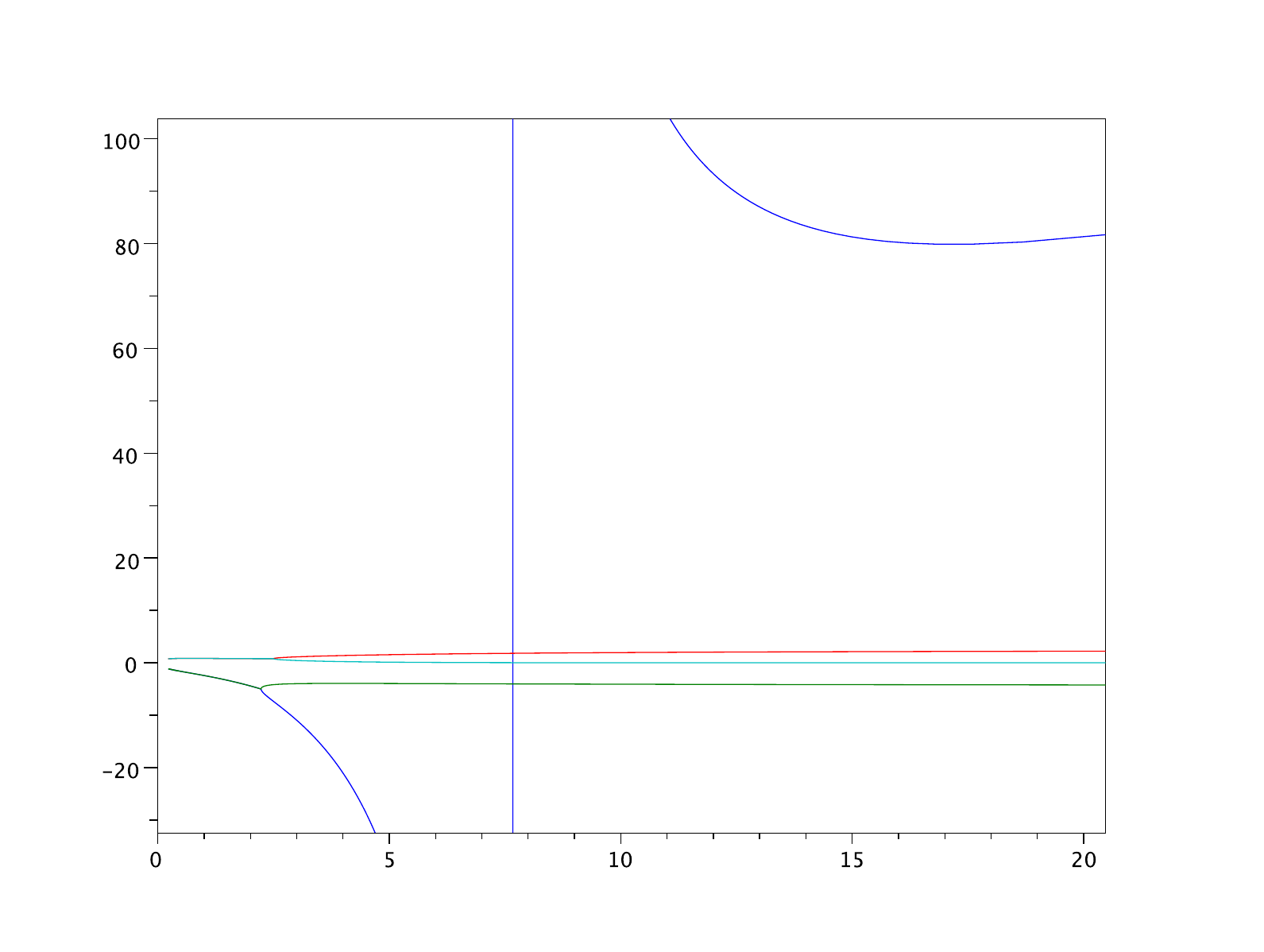}
\end{minipage}
  \caption{\label{ekswj1v08} $\Real(\lambda_i),\:i=1,2,3,4$ as functions of the period $\Xi$. On the left: $j=1$ and $v_\infty=0.84$. On the right: $j=1$ and $v_\infty=0.55$.}
 \end{figure}

 \subsection{Van der Waals type pressure laws}
 
 \subsubsection{Parametrization of periodic waves}
 
 We consider in this section pressure laws that are possibly nonconvex, and even nonmonotone. A typical example is 
the van der Waals pressure law
 $$
 \displaystyle
 \press(\vol;T)=\frac{RT}{\vol-b}-\frac{a}{\vol^2},
 $$
 which exhibits various types of behaviors depending on the temperature $T$ (compared to $a/(bR)$, where 
 $R$ is the perfect gas constant, and $a$, $b$, are parameters of the specific fluid). More precisely, defining
 $$T_0:=\frac{81a}{256bR}\,,\;T_c:=\frac{8a}{27bR}\,,$$
we easily see that 
\begin{enumerate}
\item for $T>T_0$, $\vol\mapsto p(\vol;T)$ is monotonically decaying and convex,
\item for $T_c<T<T_0$, $\vol\mapsto p(\vol;T)$ is monotonically decaying and admits two inflection points,
\item for $T<T_c$, $\vol\mapsto p(\vol;T)$ admits one local minimum, one local maximum, and two inflection points.
\end{enumerate}
After nondimensionalization this pressure law reduces to 
 $$ \press(\vol)=\frac{\gamma}{v-1}-\frac{1}{v^2}$$
 with $\gamma:=\frac{RTb}{a}$. As seen above, transition values of $\gamma$ are
 $\displaystyle\gamma_0=\frac{81}{256}$ and $\displaystyle \gamma_c=\frac{8}{27}$.\\

\noindent
In what follows, we have chosen to deal with the case of a non monotone pressure and $T=600 K$ with $a$ and $b$ roughly corresponding to water,
so that $\gamma\lesssim 0.275<\gamma_c$. \\

If we choose $\vol_\infty$ such that 
$j^2<-p'(\vol_\infty)$, and take as in the previous subsection
$\lambda=-j^2\vol_\infty-\press(\vol_\infty)$,
the phase portrait of the travelling wave ODE
$$
\displaystyle
\cap(\Vol)\Vol_{\zeta\zeta}+\tfrac{1}{2}\cap'(\Vol)\Vol_\zeta^2+\press(\Vol)+j^2\Vol+\lambda=0,
$$
is certainly independent of the function $\cap$, but heavily depends on the values of $j$ and $v_\infty$ when the pressure $\press$ is non monotone. 

\begin{figure}[ht]\label{fig:nonconvexportrait}
\begin{center}
\includegraphics[width=7cm]{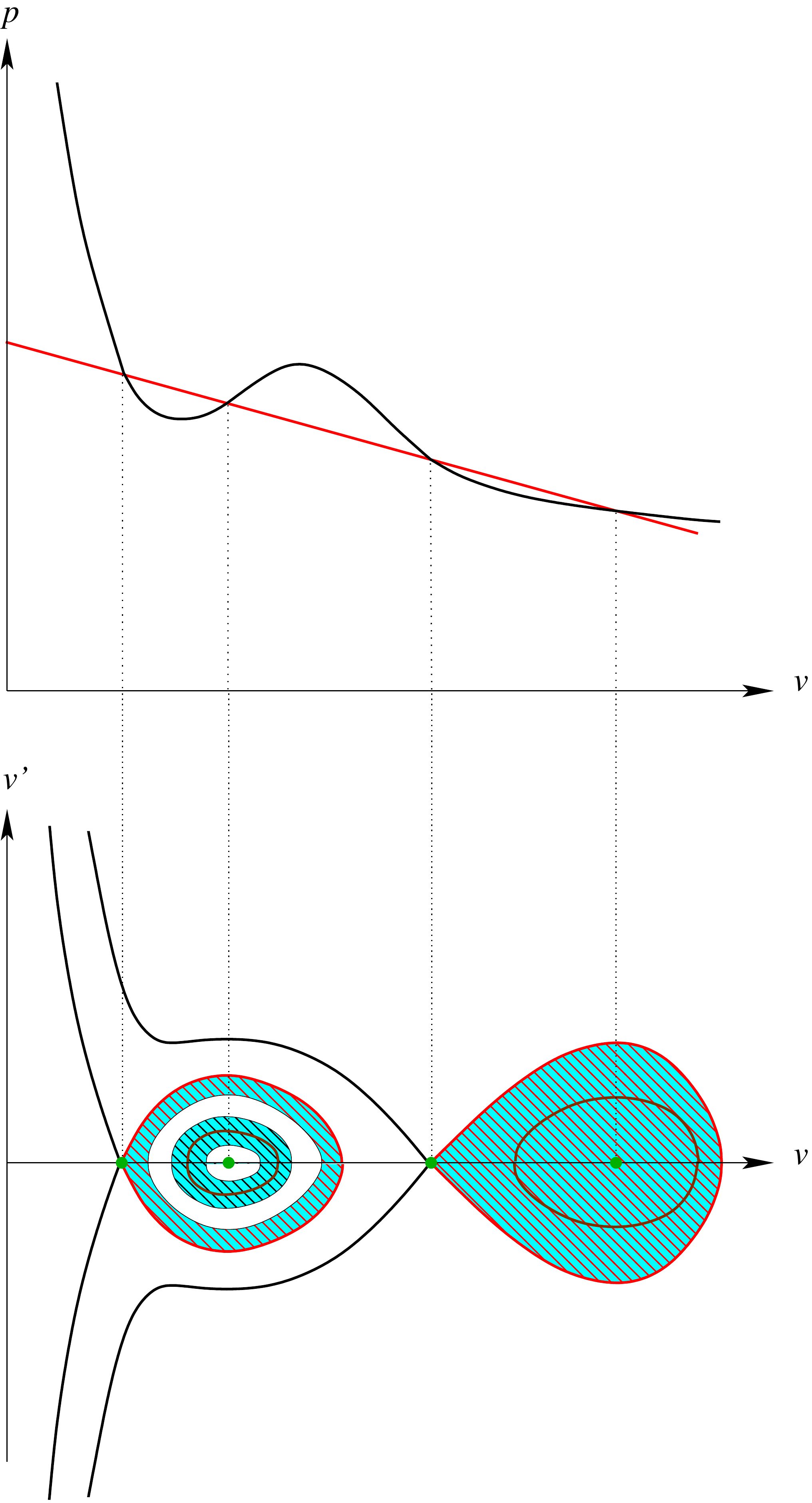} \includegraphics[width=7cm]{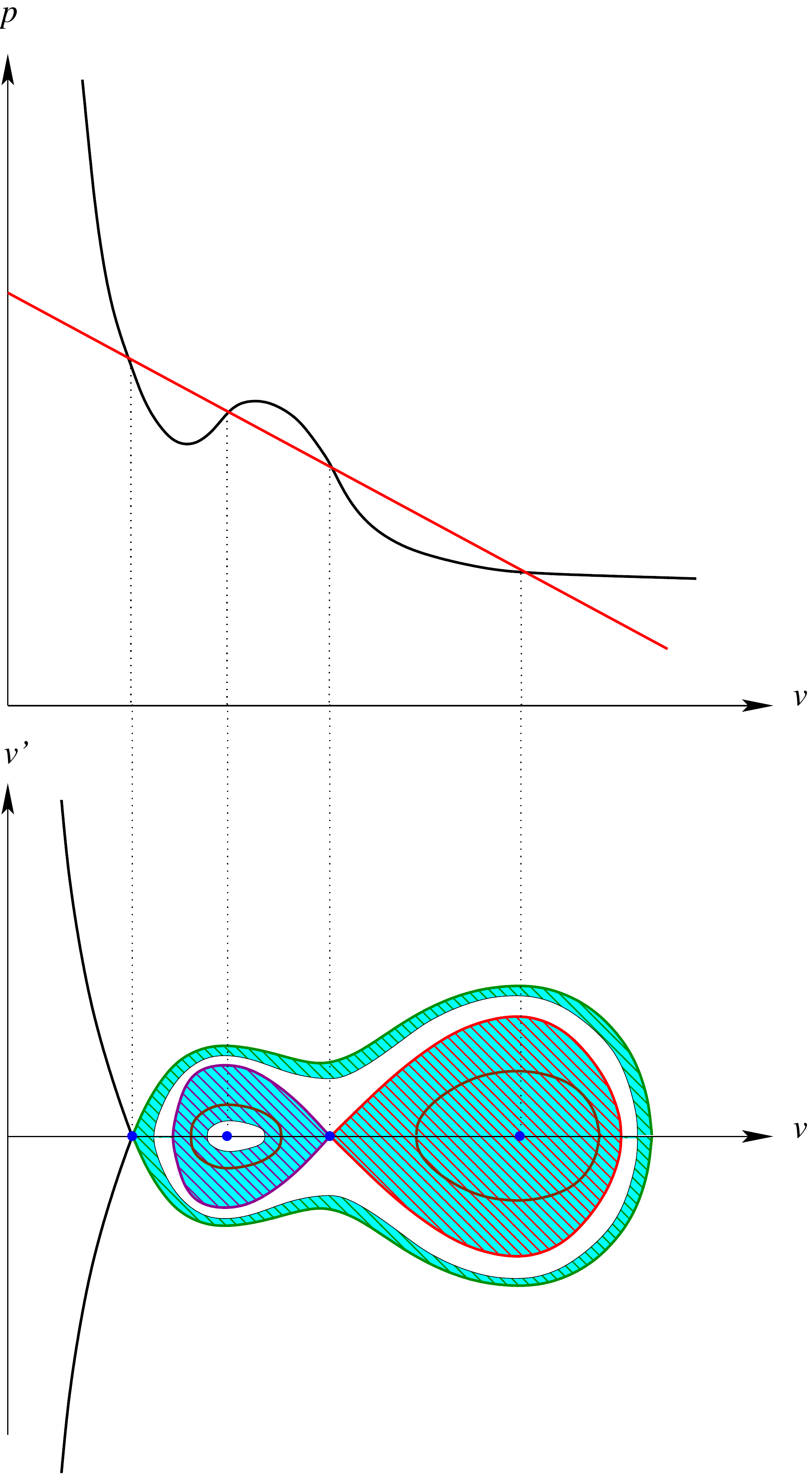}
\end{center}
\caption{\label{portraitVdW} Left: two fish phase portrait. Right: eyes-and-guitar phase portrait. The shaded regions correspond to the domain of hyperbolicity of  Whitham's modulation equations for the cases we have tested}
\end{figure} 

Depending on areas delimited by the pressure curve and by the so-called Rayleigh line, of equation $y=-j^2(\vol-\vol_\infty)$ in the $(\vol,\press)$ plane, two typical phase portraits arise, which may be described as follows (see figure \ref{portraitVdW}).
\begin{description}
\item[Two fish] There are two disconnected homoclinic loops (the `fish'), one ending at $\vol_\infty$ and one with another endpoint, and all the trajectories outside these loops are unbounded; thus there are two types of periodic orbits,  surrounded by either one of the homoclinic ones;
\item[Eyes-and-guitar] There are two homoclinic loops (the `eyes') ending at the same point (maybe $\vol_\infty$), and a third homoclinic loop (the `guitar') surrounds them; there are three types of periodic orbits, those inside the eyes, and those in between the eyes and the guitar.
\end{description}

A series of numerical investigations in various cases is reported below.

\subsubsection{Numerical results in the eyes-and-guitar case
}

\noindent
We have first considered the case $j=
0.023732$, $v_\infty=6.598196$
and checked the hyperbolicity of Whitham's equations with a simple form of the capillary coefficient: $\kappa(v)=1$. Here, we have an ``eyes and guitar'' type phase portrait. There are three families of periodic waves. The first one that is `above' the doubly homoclinic orbit. In this case, there exists $\Xi_c\approx 559$ such that the modulation system is not of evolution type. Furthermore, there exists $\Xi_M\approx 555.4$, such that if $\Xi>\Xi_M$ and $\Xi\neq \Xi_c$, the modulated equations are hyperbolic and thus periodic waves are stable under large wavelength perturbations (see figure \ref{ekvdwj011v201DB}). On the other hand, we found that the periodic waves we could compute in the loops of the doubly homoclinic orbit are stable under large scale perturbations (here Whitham's equations are hyperbolic); see figure  \ref{ekvdwj011v201SB}. 
In the smallest loop, we stopped the computations at $\Xi\approx105$: below this value, the amplitude of periodic waves is too small in comparison to the precision we fixed. Anyway, as explained at the beginning, 
we expect that these periodic waves are unstable for sufficiently small amplitudes, since the Euler equations are not hyperbolic at the center point (pressure is nondecreasing at that point).  We have tested $j=0.032$ and the situation is the same.

\begin{figure}[h!]
\begin{minipage}[c]{.48\linewidth}
\includegraphics[scale=0.52]{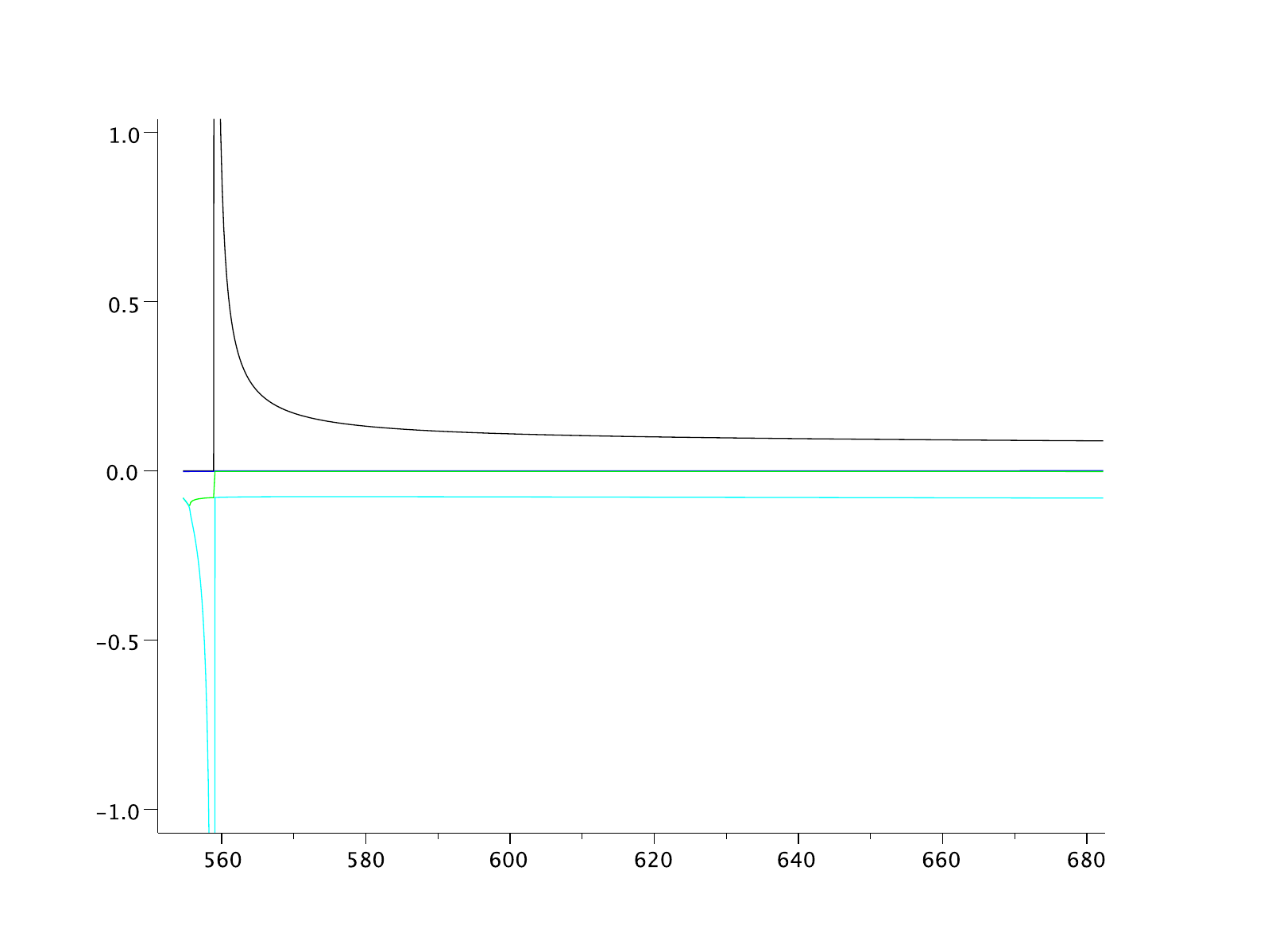}
\end{minipage} \hfill
\begin{minipage}[c]{.48\linewidth}
\includegraphics[scale=0.52]{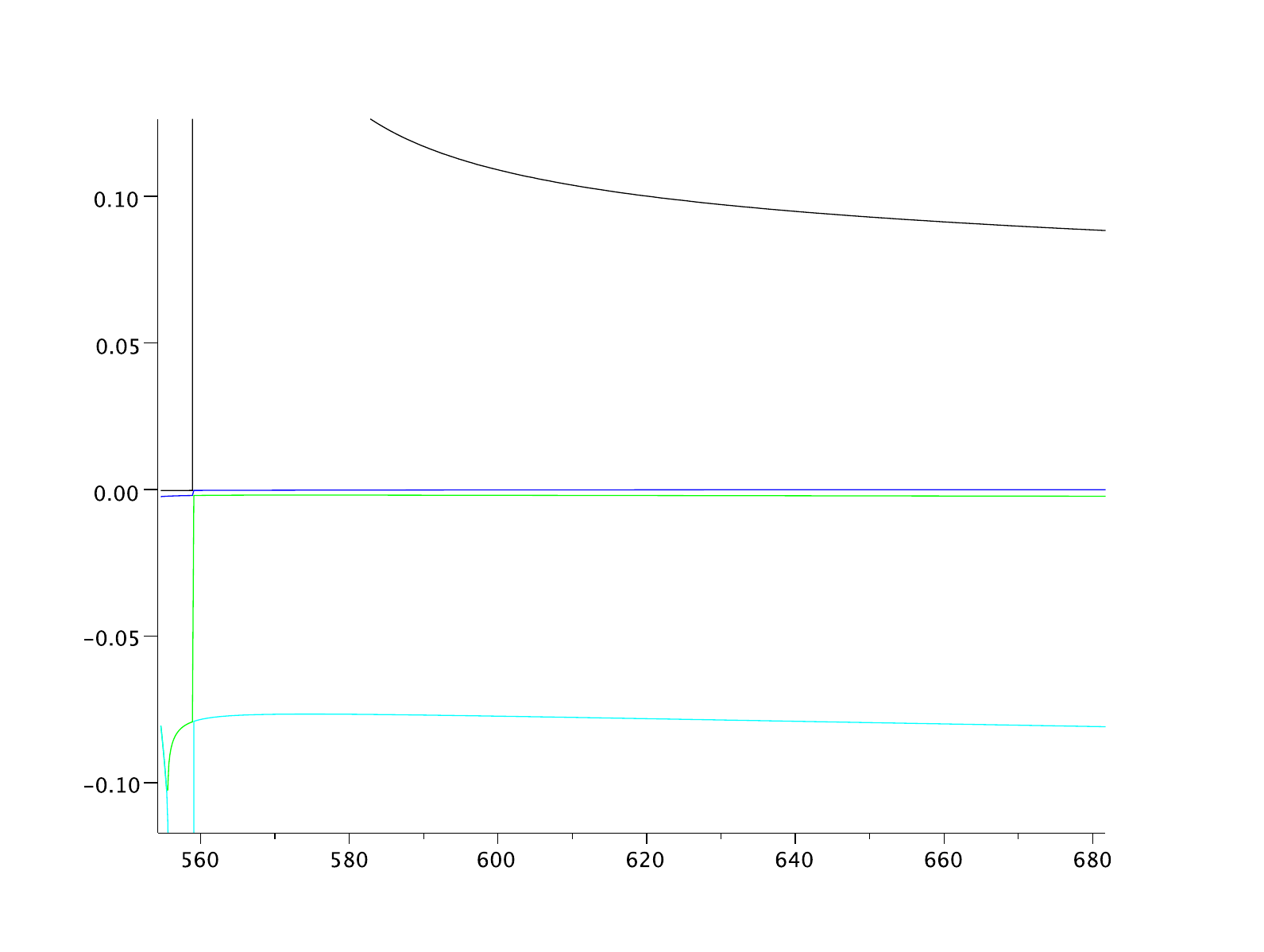}
\end{minipage}
 \caption{\label{ekvdwj011v201DB} $\Real(\lambda_i),\:i=1,2,3,4$ as functions of the period $\Xi$. On the left: $j=0.023732$ and $v_\infty=6.598196$. On the right: zoom of the previous picture}
 \end{figure}

\begin{figure}[h!]
\begin{minipage}[c]{.48\linewidth}
\includegraphics[scale=0.52]{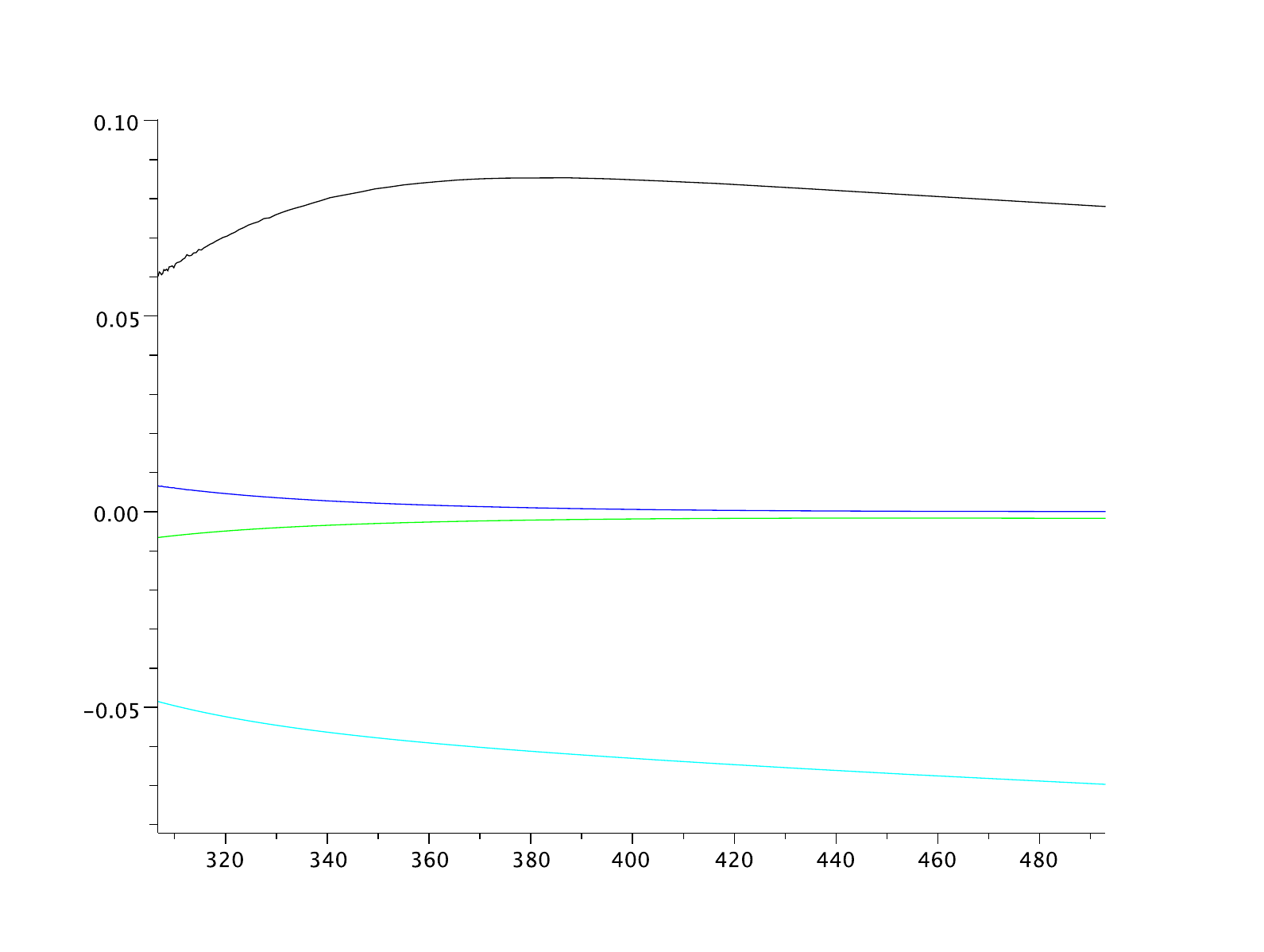}
\end{minipage} \hfill
\begin{minipage}[c]{.48\linewidth}
\includegraphics[scale=0.52]{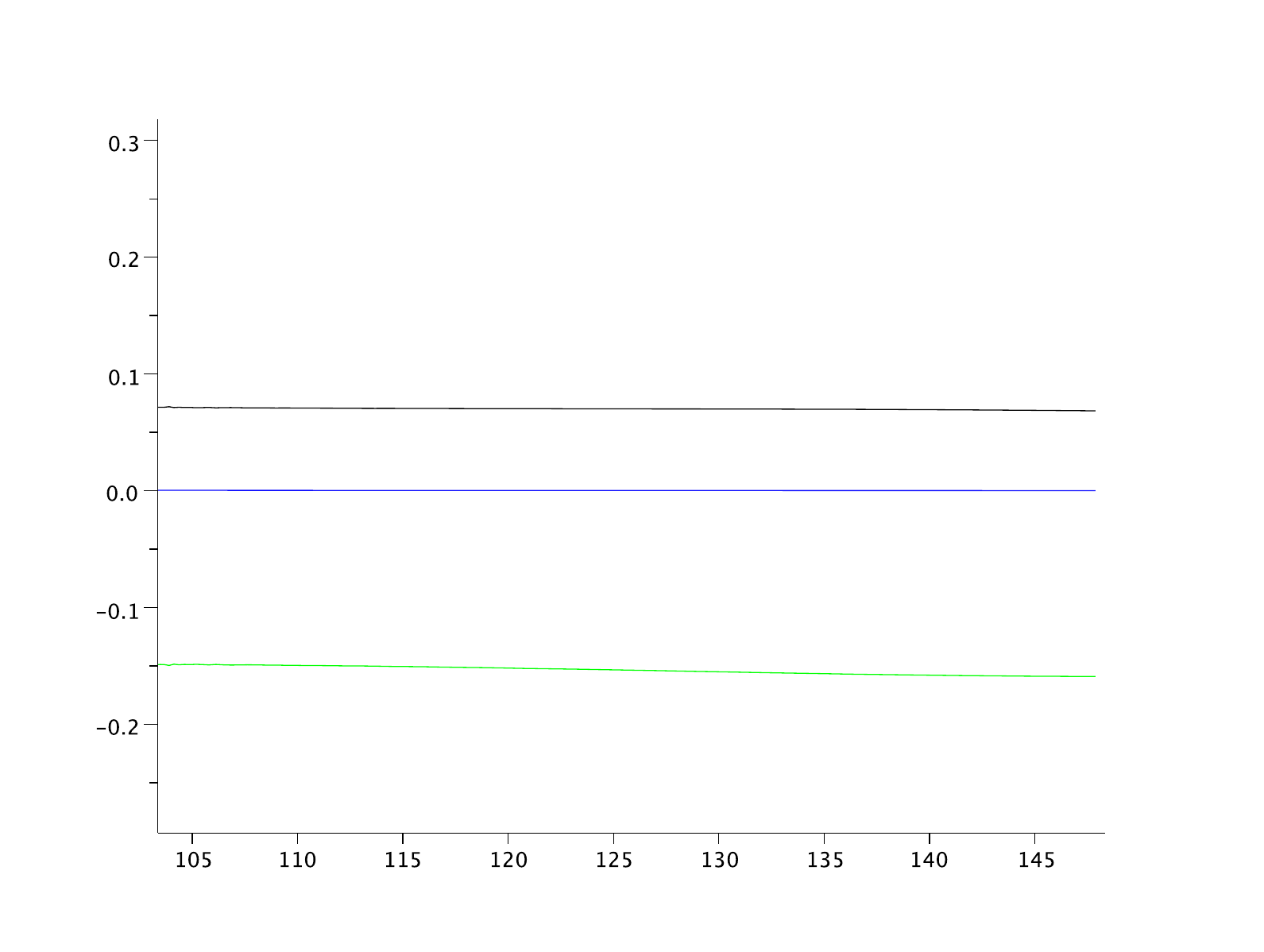}
\end{minipage}
 \caption{\label{ekvdwj011v201SB} $\Real(\lambda_i),\:i=1,2,3,4$ as functions of the period $\Xi$ for periodic waves for $j=0.023732$ and $v_\infty=6.598196$. On the left: periodic waves in the largest loop     On the right: periodic waves in the smallest loop up to $\Xi\approx 105$ (the fourth eigenvalue is real and smaller than $-10$).}
 \end{figure}

\subsubsection{Numerical results in the two-fish case}

Here, we have chosen $j=0.0258$ and $v_\infty=1.90285$: in this case there are two separated homoclinic orbits. The first one is homoclinic to $v_\infty$ and the second one is homoclinic to $w_\infty=7.57197$. In this latter homoclinic, there is a center at $w_0=32.49447$ from which bifurcates a family of periodic orbits. We have checked numerically the hyperbolicity of Whitham's equations associated with this family: the modulation system is always hyperbolic (see figure \ref{ekvdwj013v58LB}).\\

 \begin{figure}[h!]
\begin{center}
\includegraphics[scale=0.6]{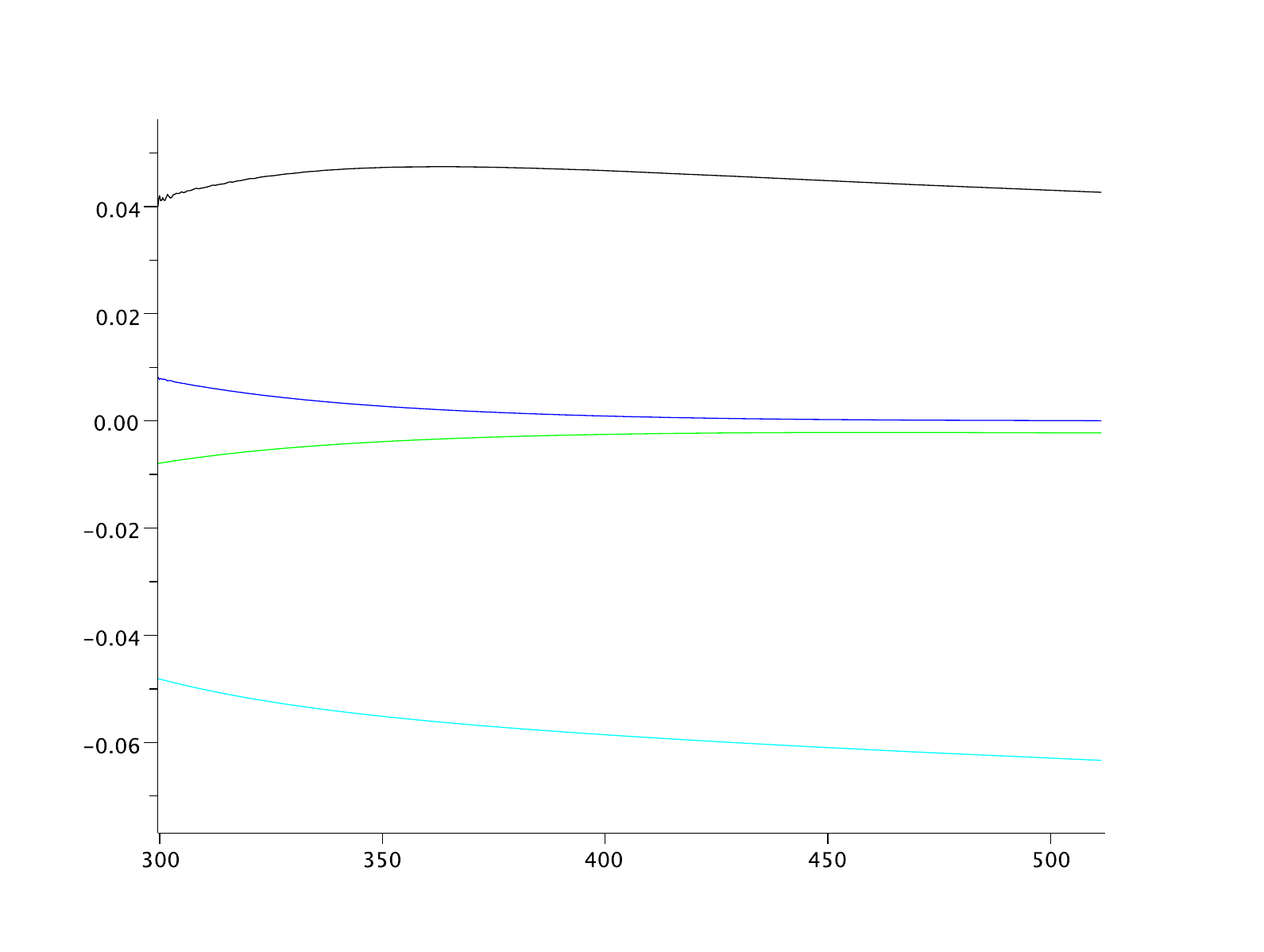}
\end{center} 
\caption{\label{ekvdwj013v58LB} $\Real(\lambda_i),\:i=1,2,3,4$ as functions of the period $\Xi$ for periodic waves in the loop of the homoclinic orbits to $w_\infty=7.57197$. Here $j=0.0258$ and $v_\infty=1.90285$}
\end{figure}
 
 \noindent
 In the homoclinic orbit to $v_\infty=1.90285$, there is a center at $v_0=3.2089$ from which bifurcates the second family of periodic orbits. The picture is a bit more complicated: if $90\leq \Xi\leq \Xi_m\approx 95.15$, the modulation system is hyperbolic. 
 Again, we stopped computations at $\Xi=90$ in the smallest loop since amplitude of the waves are too small (for the precision of computations fixed here): it is expected 
 that for sufficiently small amplitudes, periodic waves are unstable since the Euler equations are not hyperbolic there.
 Then if $\Xi\in(\Xi_m;\; \Xi_M\approx125.5)$, the system is not hyperbolic and the associated periodic waves are unstable. Finally, if $\Xi>\Xi_M$, the Whitham equations are hyperbolic again (see figure \ref{ekvdwj013v58SB}).
 
 \begin{figure}[h!]
\begin{minipage}[c]{.48\linewidth}
\includegraphics[scale=0.52]{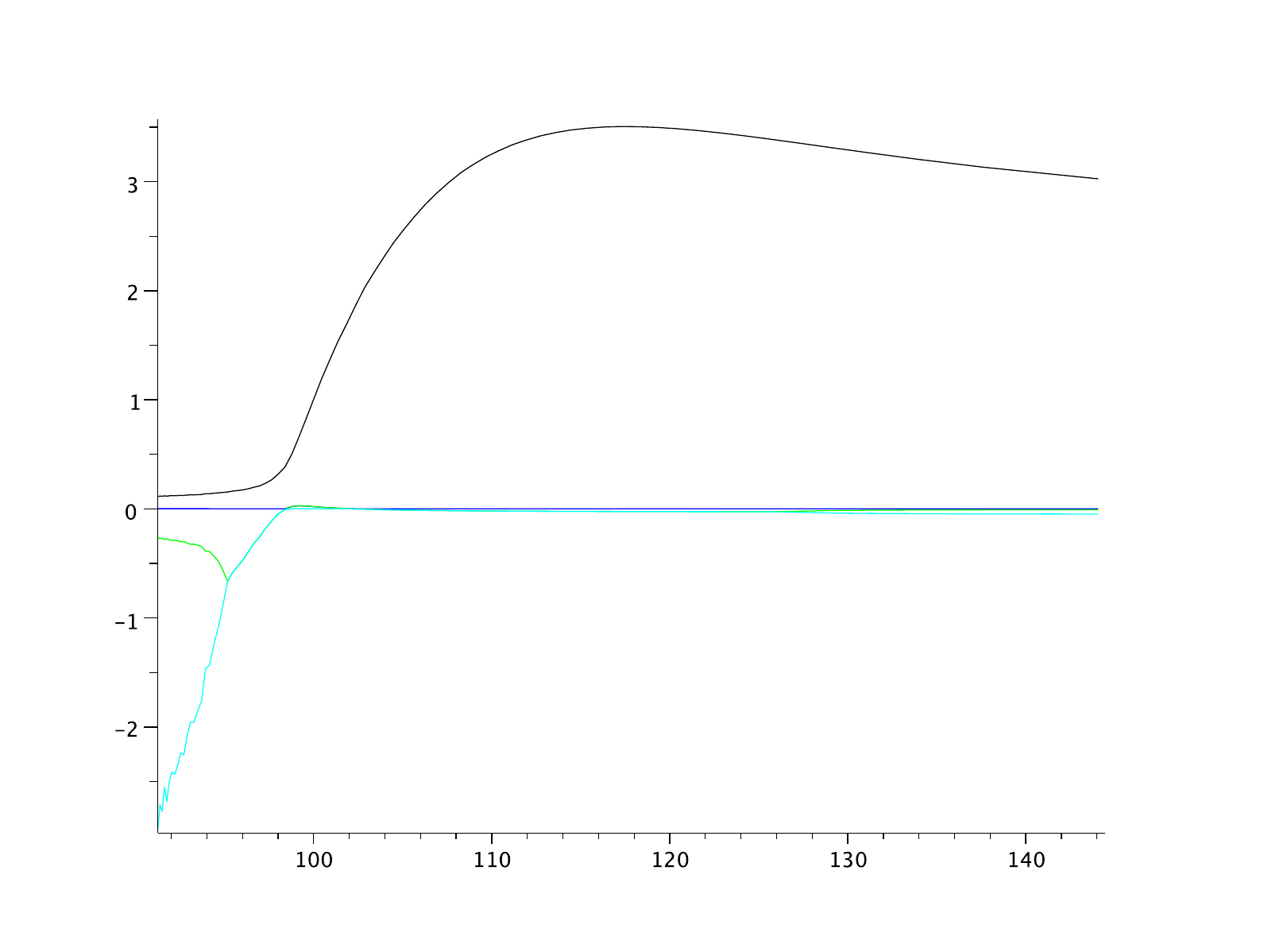}
\end{minipage} \hfill
\begin{minipage}[c]{.48\linewidth}
\includegraphics[scale=0.52]{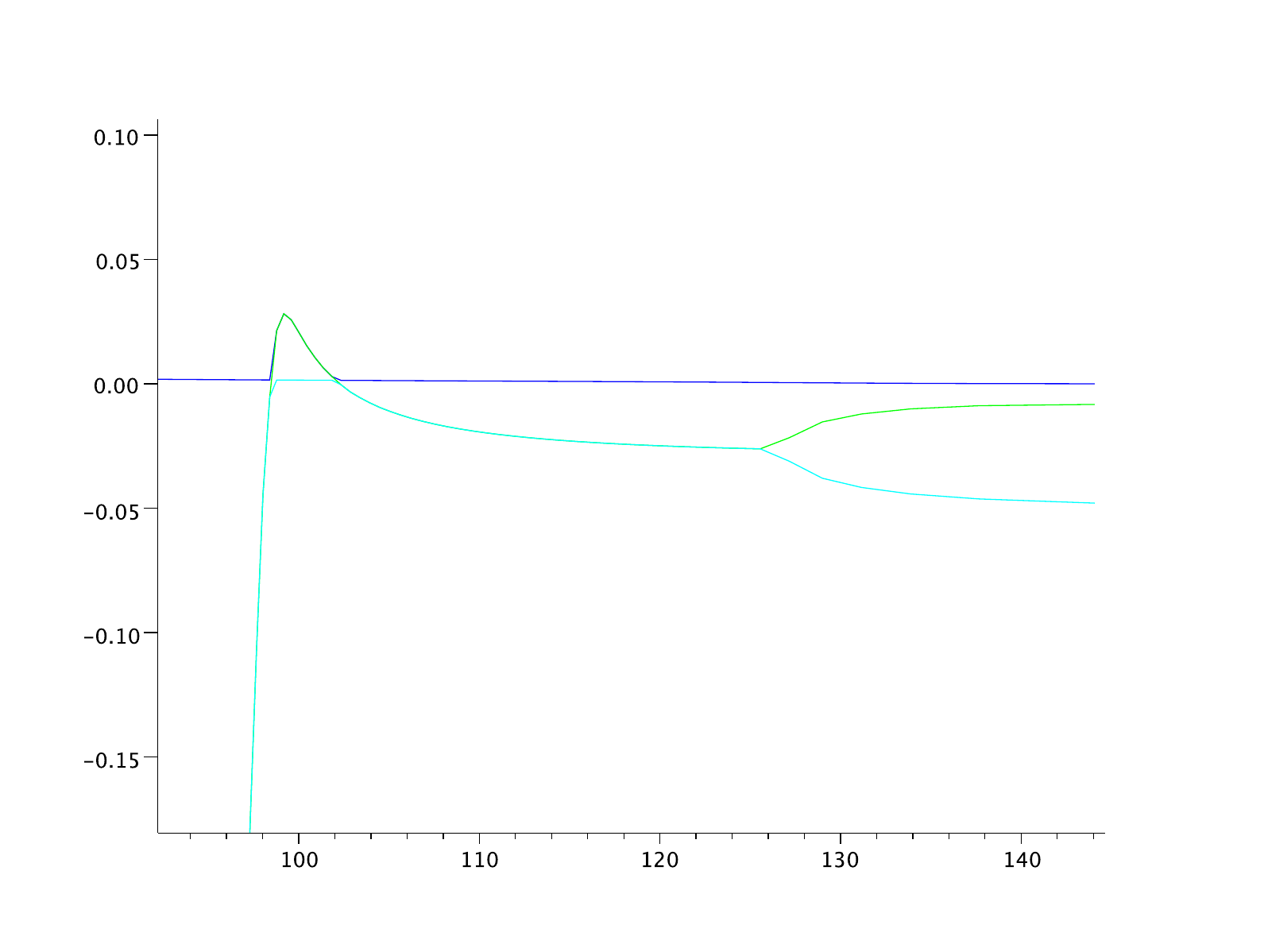}
\end{minipage}
 \caption{\label{ekvdwj013v58SB} On the left: $\Real(\lambda_i),\:i=1,2,3,4$ as functions of the period $\Xi$ for periodic waves in the loop of the homoclinic to $v_\infty$. Here $j=0.0258$ and $v_\infty=1.90285$. On the right: zoom of the previous picture on the three eigenvalues with the smallest real part. The fourth one is always real}
 \end{figure}

\subsubsection{Conclusion from numerical investigations}
 
We have found, in the cases where a doubly homoclinic orbit occurs, that periodic waves that are inside a homoclinic loop are always spectrally stable under long wave length perturbations or, more precisely, that the
 modulation equations are hyperbolic. There is an additional family of periodic waves for which we observed that there is a critical period $\Xi_c$ where the modulated equations are not of evolution type. Moreover, we found that there is $\Xi_M$ such that Whitham's equations are hyperbolic only if $\Xi>\Xi_M$.\\
 
 If there are two separated homoclinic orbits, we found that periodic waves which correspond to the larger solutions pass our stability test 
whereas there are range of periods where Whitham's equations are not hyperbolic for the smaller solutions.

\appendix

\section{A concrete computation}\label{concrete}
We derive here the averaged equations associated with Benjamin's impulses $\rho\vits$ and $\vol\vits$ for \eqref{eq:EKabs1d} and \eqref{eq:EKabsLagb} respectively. 
This computation is given for concreteness, even though it is contained in the abstract computation made in Section \ref{s:permod}.
Taking the inner product of \eqref{eq:EKabs1d1} and \eqref{eq:EKabsLagb1} with $(\vits_0,\rho_0)$ and $(\vitsL_0,\vol_0)$ respectively, then averaging and integrating by parts in $\theta$, we receive
$$\begin{array}{l}-\,K\,\langle \partial_\theta(\rho_0 (\vits_0-\sigma)) \,\vits_1\,+\,
\partial_\theta(\tfrac{1}{2}(\vits_0-\sigma)^2)\,\rho_1\,+\,A_0(\partial_\theta\rho_0)\,\rho_1\rangle\\ [5pt]
\,+\, \partial_T\langle\rho_0\,\vits_0\rangle +\partial_X \langle \rho_0 \vits_0^2\rangle\,+\,\langle \rho_0\,(\partial_X\chem_0\,+\,K\,\partial_\theta B_0)\rangle\,=\,0\,,\end{array}
$$
$$\begin{array}{l}
k\,\langle \partial_\theta(\vitsL_0-j \vol_0)\,\vitsL_1\,-\,(j\,\partial_\theta\vitsL_0)\,\vol_1\,-\,a_0(\partial_\theta\vol_0)\,\vol_1\rangle\, \\ [5pt]
+\,\partial_S\langle{\vol}_0\,\vitsL_0\rangle \,-\,\partial_Y\langle\tfrac{1}{2} {\vitsL}_0^2\rangle\,+\,\langle \vol_0 (\partial_Yp_0\,+\,k\partial_\theta\,b_0)\rangle\,=\,0\,,
\end{array}
$$
All terms with index one here above cancel out because of the profile equations \eqref{eq:EKW0} and \eqref{eq:EKLagW0}, which imply indeed that
$$\partial_\theta(\rho_0 (\vits_0-\sigma))\,=\,0\,,\quad  \partial_\theta(\tfrac{1}{2}(\vits_0-\sigma)^2)\,+\,A_0(\partial_\theta\rho_0)\,=\,0 \,,$$
$$\partial_\theta(\vitsL_0-j \vol_0)\,=\,0\,,\quad 
j\,\partial_\theta\vitsL_0\,+\,a_0(\partial_\theta\vol_0)\,=\,0\,.$$
This is straightforward for the equations on the left, and for the  ones on the right we observe that the second order differential operators $A_0$ and $a_0$ have been defined in such a way that
$$A_0(\partial_\theta\rho_0)\,=\,\partial_\theta \chem_0\,,\quad a_0(\partial_\theta\vol_0)\,=\,\partial_\theta p_0\,.$$
Finally, we recover the equations in \eqref{eq:consImpulseav} and \eqref{eq:consimpulseav} obtained by averaging the impulses' conservation laws by checking that 
$$\langle \rho_0\,(\partial_X\chem_0\,+\,K\,\partial_\theta B_0)\rangle\,=\,\partial_X\left\langle \rho_0\,\chem_0 \,+\,K\,(\partial_\theta \rho_0)\,\frac{\partial \En}{\partial \rho_x}(\rho_0,K\partial_\theta \rho_0)\,-\,\En(\rho_0,K\partial_\theta \rho_0)
\right\rangle$$
$$\langle \vol_0 (\partial_Yp_0\,+\,k\,\partial_\theta b_0)\rangle\,=\,\partial_Y\left\langle \vol_0 \,p_0\,+\,\en(\vol_0,k\partial_\theta\vol_0) \,-\,k\,(\partial_\theta \vol_0)\,\frac{\partial \en}{\partial \vol_y}(\vol_0,k\partial_\theta\vol_0)\right\rangle\,.$$
Let us check the first equality, the second one being identical through the symmetry
$(X,K,\rho_0,\En, \chem_0,B_0) \leftrightarrow (Y,k,\vol_0,\en,-\press_0,-b_0)$. 
We have
$$\partial_X\left\langle \rho_0\,\chem_0 \,+\,K\,(\partial_\theta \rho_0)\,\frac{\partial \En}{\partial \rho_x}(\rho_0,K\partial_\theta \rho_0)\,-\,\En(\rho_0,K\partial_\theta \rho_0)
\right\rangle\,-\,\langle \rho_0\,(\partial_X\chem_0\,+\,K\,\partial_\theta B_0)\rangle\,=\,$$
$$\left\langle (\partial_X\rho_0)\,\chem_0 \,+\,\partial_X\left(K\,(\partial_\theta \rho_0)\,\frac{\partial \En}{\partial \rho_x}(\rho_0,K\partial_\theta \rho_0)\,-\,\En(\rho_0,K\partial_\theta \rho_0)\right)
\right\rangle\,-\,K\,\langle \rho_0\,\partial_\theta B_0\rangle\,.$$
Recalling the definition of $\chem_0$ and integrating once by parts (in $\theta$ of course), we get
$$\langle(\partial_X\rho_0) \chem_0\rangle\,=\,\left\langle (\partial_X\rho_0)\,\frac{\partial \En}{\partial \rho}(\rho_0,K\partial_\theta \rho_0)\,+\,
(\partial_\theta\partial_X\rho_0)\,\frac{\partial \En}{\partial \rho_x}(\rho_0,K\partial_\theta \rho_0)\right\rangle\,=\,
$$
$$\partial_X\langle \En(\rho_0,K\partial_\theta \rho_0)\rangle\,-\,
\left\langle (\partial_XK)(\partial_\theta\rho_0)\,\frac{\partial \En}{\partial \rho_x}(\rho_0,K\partial_\theta \rho_0)\right\rangle\,=\,$$
$$\partial_X\left\langle\En(\rho_0,K\partial_\theta \rho_0) \,-\,K\,(\partial_\theta \rho_0)\,\frac{\partial \En}{\partial \rho_x}(\rho_0,K\partial_\theta \rho_0)\right\rangle\,+\,K\,\partial_X\left\langle (\partial_\theta \rho_0)\,\frac{\partial \En}{\partial \rho_x}(\rho_0,K\partial_\theta \rho_0) \right\rangle\,.$$
So it just remains to show that 
$$\langle \rho_0\,\partial_\theta B_0\rangle\,=\,\partial_X\left\langle (\partial_\theta \rho_0)\,\frac{\partial \En}{\partial \rho_x}(\rho_0,K\partial_\theta \rho_0) \right\rangle\,.$$
Once again, this follows from integrations by parts. Indeed, 
$$-\,\langle (\partial_\theta\rho_0)\, B_0\rangle\begin{array}[t]{l}\,=\,\left\langle - 
(\partial_\theta\rho_0)\,
\dfrac{\partial^2\En}{\partial\rho\partial\rho_x}(\rho_0,K \partial_\theta\rho_0)\,(\partial_X\rho_0)\,+\,(\partial_\theta\rho_0)\,\partial_X\left(\dfrac{\partial\En}{\partial\rho_x}(\rho_0,K \partial_\theta\rho_0)\right)\right.\\ [10pt]
\left.\;\;\;-\,K\,(\partial^2_\theta\rho_0)\,\left(\dfrac{\partial^2\En}{\partial\rho_x^2}(\rho_0,K \partial_\theta\rho_0)\right)\,(\partial_X\rho_0)\right\rangle\\ [15pt]
\,=\, \left\langle -\,\partial_\theta\left(\dfrac{\partial\En}{\partial\rho_x}(\rho_0,K \partial_\theta\rho_0)\right)\,(\partial_X\rho_0)\,
\,+\,(\partial_\theta\rho_0)\,\partial_X\left(\dfrac{\partial\En}{\partial\rho_x}(\rho_0,K \partial_\theta\rho_0)\right)\right\rangle
\\ [15pt]
\,=\, \displaystyle\partial_X\left\langle (\partial_\theta \rho_0)\,\frac{\partial \En}{\partial \rho_x}(\rho_0,K\partial_\theta \rho_0) \right\rangle\,.
\end{array}$$

\section{A convenient structural assumption}
Our purpose here is to check that, under a reasonable structure assumption on the Hamiltonian $\Ham$,
the operator $\Lin^\nu$ behaves properly, and if the kernel of $\Linz$ has the expected size, 
our parametrization hypothesis for periodic profiles is met.

\paragraph{Structure of the Hamiltonian}
To go further into the analysis of the abstract equation \eqref{eq:absHam}, we need to be more specific about the form of the Hamiltonian $\Ham$. Inspired by our examples \eqref{eq:EKabs1d}\eqref{eq:EnK}, \eqref{eq:EKabsLagb}\eqref{eq:enK}, and \eqref{eq:KdVgen}, we write the $\bU$-space as $\R^N=\R^n\times\R^{N-n}$ for some integer $n$, $0\leq n\leq N$,  require that
$$
\bU=\left(\begin{array}{c} \bv \\ \bu\end{array}\right)\,,\quad
\Ham(\bU)=\Ec(\bv,\bu)+\En(\bv,\bv_x)\,,
$$
and assume that $\Ham+c\Impulse$ is uniformly strongly convex in both $\bv_x$ and $\bu$  on the range of $(\bU,\bv_x)$-values and speeds $c$ under consideration. Note that a simple way to make this assumption independent of $c$ is to assume that $\bJ^{-1}$ has a block structure of the form
$$
\bJ^{-1}=\left(\begin{array}{c|c}*&*\\ \hline *&0_{(N-n)\times(N-n)}\end{array}\right)\,,
$$
as is the case 
for the Euler-Korteweg system.

\subsection{Compactness of resolvents}\label{compact_resolvent}

We briefly sketch here a proof of the fact that our structural assumption on $\Ham$ ensures that $\Linz$ with domain $H^3(\R/\Z;\R^n)\times H^1(\R/\Z;\R^{(N-n)})$ have a nonempty resolvent set and compact resolvents. In turn, this implies that, for all $\nu$, $\Lin^\nu$ is a relatively compact perturbation of $\Linz$.

Our structural assumptions readily yield
$$
\Linz=\bJ k\d_\theta \Linvarz,\qquad \Linvarz=\St+\Sd k\d_\theta - k\d_\theta\Sd^*-k\d_\theta\Su k\d_\theta
$$
with $\Su=\Su^*$, $\St=\St^*$,
$$
\Su=\left(\begin{array}{c|c}\su&0_{n\times(N-n)}\\\hline 0_{(N-n)\times n}&0_{(N-n)\times(N-n)}\end{array}\right),\ 
\Sd=\left(\begin{array}{c|c}*&0_{n\times(N-n)}\\\hline 0_{(N-n)\times n}&0_{(N-n)\times(N-n)}\end{array}\right),\ 
\St=\left(\begin{array}{c|c}*&*\\\hline *&\st\end{array}\right),
$$
with $\su$ and $\st$ being uniformly positive definite. Then, standard energy estimates enable us to show that
$$
\begin{array}{rcl}
\left|\langle \bU, 
(z-\Linz)\bU
\rangle\right|&\geq&|\Real(z)|\,\|\bU\|^2-C\,\|\bv\|_{H^2}^2-C\,\|\bu\|_{H^1}^2,\\
\left|\left\langle \left(\begin{array}{c}(k\d_\theta)^3\bv\\C_0k\d_\theta\bu\end{array}\right), 
(z-\Linz)\bU
 \right\rangle\right|&\geq&
\frac12 \langle (k\d_\theta)^3\bv,\su(k\d_\theta)^3\bv\rangle\ +\ \frac{C}{2}\langle k\d_\theta\bu,\st k\d_\theta\bu\rangle\\
&&\ -C\,|\Imag(z)|\,[\|\bv\|_{H^{5/2}}^2+\|\bu\|_{H^{1/2}}^2]-C\,\|\bU\|^2
\end{array}
$$
where $C$ is a positive constant independent of $z$. 
From this we obtain that there exist $\eta>0$ and $C'>0$ such that if $|\Real(z)|\geq \eta\,[1+|\Imag(z)|^6]$ then
\begin{equation}\label{eq:estimres}
\|\bU\|_{H^3\times H^1}\ \leq\ C'\|(z-\Linz)\bU\|.
\end{equation}
This already shows that, for such a $z$, $(z-\Linz)$ has a closed range and is one-to-one with a continuous inverse. To check that the previous range is dense, we only need to examine whether the formal adjoint is indeed one-to-one (on smooth functions). This amounts to show that
\begin{equation}\label{eq:ev}
(-\bar z-\Linvarz\bJ k\d_\theta)\bV\ =\ 0 
\end{equation}
has no nontrivial smooth solution $\bV$. 
Applying the operator $\bJ k\d_\theta$ to 
\eqref{eq:ev},
we deduce from 
\eqref{eq:estimres}
applied to $\bar z$ 
and $\bU=\bJ k \partial_\theta \bV$
that $k\d_\theta\bV=0$, 
which in turn implies  $\bV=0$, because of \eqref{eq:ev} and the fact that $z$ is nonzero. 
This proves that for the above $\eta>0$, if $|\Real(z)|\geq \eta\,[1+|\Imag(z)|^6]$ then $z$ lies in the resolvent set of $\Linz$.

Since we have not used any Poincar\'e inequality in our previous arguments, they apply \emph{mutatis mutandis} to 
$\Lin$ with domain $H^3(\R;\R^n)\times H^1(\R;\R^{(N-n)})$,
and show that it has a nonempty resolvent set.

\subsection{Parametrization of periodic orbits}\label{Whitham_param}

Let us prove that, under our structural assumption on $\Ham$, there is no restriction in assuming, as done in Theorem~\ref{thm:absstab}, that Whitham's parametrization by $(k,\bM,P)$ is admissible. More precisely, we are going to show that, if nearby periodic traveling wave profiles form a $N+2$ dimensional manifold, and if the generalized kernel of $\Linz$ is of dimension $N+2$, then those nearby profiles are parametrized by $(k,\bM,P)$. This extends to our Hamiltonian framework a result previously shown for parabolic conservation laws by Serre \cite{Serre}.

To set things on a more formal ground, we define on some open neighborhood $\mathcal{U}$ of wave values $(\uXi,\uc,\ubv(0),\ubv_{x}(0),\ublambda)$ the map
$$
\mathcal{R}:\qquad\begin{array}{ccl}\mathcal{U}&\longrightarrow&\R^{2n},\\(\Xi,c,\bUuz,\bUuzx,\blambda)&\longmapsto&([\bv]_0^{\Xi},[\bv_x]_0^{\Xi})\end{array}
$$
where $[\,\cdot\,]_0^{\Xi}$ denotes the jump $[f]_0^{\Xi}=f(\Xi)-f(0)$, and $\bU$ is the solution of
$$
\Euler ( \Ham +c \Impulse)[\bU] = \blambda,\quad \bv(0)=\bUuz,\quad \bv_{x}(0)=\bUuzx.
$$
We identify in the usual way nearby periodic traveling wave profiles with elements of the zero set of $\mathcal{R}$.

\begin{proposition}
Assume that $\ubU$ is non trivial and that $\mathcal{R}$ has constant rank $2n-1$. Then the generalized kernel of $\Linz$ is of dimension $N+2$ if and only if, up to translation, nearby periodic traveling wave profiles may be regularly parametrized by $(k,\bM,P)$.
\end{proposition}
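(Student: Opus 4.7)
The plan is to read off both sides of the equivalence from one infinitesimal computation on the manifold of periodic profiles, by identifying the generalized kernel of $\Linz$ with a distinguished subspace of its tangent space.

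I first invoke the implicit function theorem. The domain of $\mathcal{R}$ has dimension $1+1+n+n+N=N+2n+2$ and its target has dimension $2n$, so the constant-rank-$(2n-1)$ hypothesis forces the zero set of $\mathcal{R}$ to be a smooth submanifold of dimension $N+3$. Factoring out the free translation action on the initial data $(\bUuz,\bUuzx)$ then yields a smooth manifold $\mathcal{M}$ of periodic profiles (up to translation) of dimension $N+2$ near $\ubU$; denote its unreduced counterpart by $\widetilde{\mathcal{M}}$, of dimension $N+3$.

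The heart of the argument is an identification of the generalized kernel of $\Linz$ on $1$-periodic functions with the subspace of $T_{\ubU}\widetilde{\mathcal{M}}$ cut out by $\delta k=0$. For the forward direction, given a smooth one-parameter family $\bU(\theta;s)$ of $1$-periodic profiles through $\ubU$ with fixed rescaled wavenumber $k$, differentiating the normalized profile equation $\Linvar(k\partial_\theta)\partial_\theta \bU=0$ at $s=0$ shows that $\partial_s \bU|_{s=0}$ lies in $\ker(\Linz)^2$, hence in the generalized kernel. The reverse direction relies on the Fredholm theory of $\Linz$ secured by the compact-resolvent property of \S\ref{compact_resolvent}: given a prescribed generalized-kernel element, one constructs an infinitesimal deformation realizing it by solving a linearized profile equation, with compatible infinitesimal variations of $(c,\blambda,\mu)$. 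The delicate point, which I view as the main obstacle, is to handle the $1$-periodicity constraint against the natural non-periodicity of parameter-derivatives $\bU_a$, whose jumps over a period equal $\Xi_a\,\partial_x\ubU$; this is resolved by repackaging the parameter derivatives exactly in the style of the combinations $\bPhi_\alpha$ and $\bPsi_\alpha$ from the preliminary discussion in \S\ref{ss:mat}, which are built to cancel those jumps.

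Once this identification is in hand, the regularity of the Whitham map $\Phi:\mathcal{M}\to\R^{N+2}$, $\bU\mapsto(k,\bM,P)$, becomes a purely cotangent condition. Since source and target have the same dimension, $\Phi$ is a local diffeomorphism iff the three linear forms $\delta k,\ \delta\bM,\ \delta P$ are linearly independent on $T_{\ubU}\mathcal{M}$. Lifting to $T_{\ubU}\widetilde{\mathcal{M}}$, the vector $\partial_\theta\ubU$ tautologically satisfies $\delta k=\delta\bM=\delta P=0$ — these three quantities being translation invariant, with $\langle\partial_\theta\ubU\rangle=0$ and $\langle\bJ^{-1}\ubU\cdot\partial_\theta\ubU\rangle=0$ following by integration by parts — so independence amounts to the joint zero locus of $\delta k,\delta\bM,\delta P$ on $T_{\ubU}\widetilde{\mathcal{M}}$ being exactly $\R\,\partial_\theta\ubU$. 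The equivalence then unfolds: if $\dim\ker^{\mathrm{gen}}\Linz=N+2$, then $\delta k\not\equiv 0$ on $T_{\ubU}\widetilde{\mathcal{M}}$, so $\{\delta k=0\}$ is of codimension one and coincides with $\ker^{\mathrm{gen}}\Linz$; imposing the further $N+1$ constraints $\delta\bM=0$, $\delta P=0$ on this $(N+2)$-dimensional space cuts it down to a line, which can only be $\R\,\partial_\theta\ubU$, and $\Phi$ is regular. Conversely, if $\Phi$ is regular then in particular $\delta k\not\equiv 0$ on $T_{\ubU}\widetilde{\mathcal{M}}$, and the identification forces $\dim\ker^{\mathrm{gen}}\Linz=N+2$.
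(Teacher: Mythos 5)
Your overall strategy (identify tangent directions to the profile manifold with generalized-kernel elements and read the regularity of the Whitham map off a duality/dimension count) is genuinely different from the paper's, which never argues on the tangent space directly: the paper invokes Gardner's result that the dimension of the generalized kernel of $\Linz$ equals the order of vanishing at $z=0$ of an Evans function $\Evans$, and then factors $z^{N+2}$ out of the determinant defining $\Evans$ (by integrating the eigenvalue equation and the impulse identity over a period), identifying the leading coefficient with the Jacobian of $\bU\mapsto(\Xi,\int_0^\Xi\Impulse(\bU),\int_0^\Xi\bU)$ restricted to the tangent space of the profile manifold. As written, however, your proposal has two genuine gaps, one in each direction. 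In the forward direction, after you have identified the generalized kernel with the image of $\{\delta k=0\}$ (which is fine, by injectivity plus the dimension hypothesis), you assert that imposing the $N+1$ constraints $\delta\bM=0$, $\delta P=0$ ``cuts it down to a line, which can only be $\R\,\partial_\theta\ubU$.'' A dimension count only gives that the cut-down space has dimension \emph{at least} one; that it is exactly one, i.e.\ that the $N+1$ linear forms $\bW\mapsto\langle\ee_\alpha\cdot\bW\rangle$ and $\bW\mapsto\langle\bJ^{-1}\ubU\cdot\bW\rangle$ are independent on the generalized kernel, is precisely the crux and is not proved. (It can be repaired: since $0$ is an isolated eigenvalue of finite algebraic multiplicity, the pairing between $\range\,\Pi(0)$ and $\range\,\Pi(0)^*$ is nondegenerate, and $\ee_\alpha,\ \bJ^{-1}\ubU$ lie in $\ker(\Linz)^*\subset\range\,\Pi(0)^*$, so no nontrivial combination of them can annihilate the whole generalized kernel. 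But this argument is absent from your text.)

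The second and more serious gap is in the converse. There you need the \emph{upper} bound: regular parametrization implies the generalized kernel has dimension at most $N+2$, i.e.\ every generalized-kernel element is realized by a deformation of profiles. Your sketch (``one constructs an infinitesimal deformation realizing it by solving a linearized profile equation, with compatible infinitesimal variations of $(c,\blambda,\mu)$'') only produces elements $\bW$ with $\Linz\bW\in\R\,\partial_\theta\ubU$: a periodic solution of $\Linvar\bW=\delta\blambda+\delta c\,\bJ^{-1}\ubU$ with \emph{constant} $\delta\blambda,\delta c$ is indeed tangent to $\widetilde{\mathcal M}=\mathcal{R}^{-1}(0)$ thanks to the constant-rank assumption, but a Jordan chain of length $\geq 3$ over $\partial_\theta\ubU$, or a chain over a kernel element not proportional to $\partial_\theta\ubU$, corresponds to a linearized profile equation with a \emph{non-constant} right-hand side and cannot be matched by any variation of the parameters $(c,\blambda)$. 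Nothing in your argument (and nothing in ``Fredholm theory'' per se) excludes such elements; ruling them out is exactly the content of this direction, and it is what the paper obtains from the Evans-function computation, which shows that when the Jacobian determinant is nonzero the vanishing order of $\Evans$ at $0$ --- hence the algebraic multiplicity --- is exactly $N+2$. Until you supply an argument bounding the multiplicity from above (or reproduce some equivalent of the Evans-function factorization), the converse implication is not established.
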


\begin{proof}
Our proof is based upon the fact that the dimension of the generalized kernel of $\Linz$ is the algebraic multiplicity of zero as a root of some Evans function $\Evans(\,\cdot\,)$ (see \cite{Gardner93}). Indeed, viewing spectral problem
\begin{equation}\label{Evans_eq}
z\,\bV\ =\ \Lin\bV
\end{equation}
for $(z,\bV)=(z,(\bv,\bu)^T)$ as a system of coupled differential equations of third-order in $\bv$ and first-order in $\bu$, we may introduce its fundamental solution $R(z;\cdot)$ normalized by $R(z;0)=\id_{\R^{3n}\times\R^{(N-n)}}$ and define
$$
\Evans(z)\ =\ \det ([R(z;\,\cdot\,)]_0^{\uXi}).
$$
Then the condition on the dimension  of the generalized kernel of $\Linz$ reads
$$
\Evans(z)\ =\ a\,z^{N+2}\ +\ \cO(z^{N+3})
$$
for some nonzero $a$ \cite{Gardner93}.
We want to convert this into some information about profiles parametrization.

Let us denote by $\bV^j(z;\cdot)$ the solution to \eqref{Evans_eq} corresponding to the $j$-th column of the matrix $R(z;\cdot)$, that is $\bV^j(z;\cdot)$ solves \eqref{Evans_eq} and $(\bv^j(z;0),\bv^j_x(z;0),\bv^j_{xx}(z;0),\bu^j(z;0))^T$ is the $j$-th vector of the canonical basis of $\R^{3n}\times\R^{(N-n)}$. The Evans function is then written
$$
\Evans(z)\ =\ \left|\begin{array}{rrlcrl}
&[\!\!\!\!&\bv^1]& \cdots &[\!\!\!\!&\bv^{N+2n}]\\ 
&[\!\!\!\!&\bv^1_x]& \cdots &[\!\!\!\!&\bv^{N+2n}_x]\\ 
&[\!\!\!\!&\bv^1_{xx}]& \cdots &[\!\!\!\!&\bv^{N+2n}_{xx}]\\ 
&[\!\!\!\!&\bu^1]&\cdots &[\!\!\!\!&\bu^{N+2n}]\\ 
\end{array}\right|
$$
where we have dropped the marks $0$ and $\uXi$ on jumps. To go further, using our structural assumptions, we write\footnote{We warn the reader that, 
%
because of $\uk$ factors,
these notations are not compatible with the ones of the previous section of the present Appendix.}
$$
\Lin=\bJ\d_x\Linvar,\qquad \Linvar=\St+\Sd\d_x-\d_x\Sd^*-\d_x\Su\d_x
$$
with $\Su=\Su^*$, $\St=\St^*$,

$$
\Su=\left(\begin{array}{c|c}\su&0_{n\times(N-n)}\\\hline 0_{(N-n)\times n}&0_{(N-n)\times(N-n)}\end{array}\right),\ 
\Sd=\left(\begin{array}{c|c}*&0_{n\times(N-n)}\\\hline 0_{(N-n)\times n}&0_{(N-n)\times(N-n)}\end{array}\right),\ 
\St=\left(\begin{array}{c|c}*&*\\\hline *&\st\end{array}\right),
$$
$\su$ and $\st$ being uniformly positive definite. Integrating \eqref{Evans_eq} from $0$ to $\uXi$ yields
$$
z\,\int_0^{\uXi} \bV^j\ =\ \bJ\left(\begin{array}{c}\su(0)[\bv^j_{xx}]+\ *[\bv^j_x]+*[\bv^j]+*[\bu^j]\\
\st(0)[\bu^j]+\ *[\bv^j]
\end{array}\right).
$$
Therefore, up to a nonzero multiplicative constant, $\Evans(z)$ is also written
$$
\left|\begin{array}{rrlcrl}
&[\!\!\!\!&\bv^1]& \cdots &[\!\!\!\!&\bv^{N+2n}]\\ 
&[\!\!\!\!&\bv^1_x]& \cdots &[\!\!\!\!&\bv^{N+2n}_x]\\ 
&z\!\!\!\!&\int_0^{\uXi} \bV^1 &\cdots&z\!\!\!\!&\int_0^{\uXi} \bV^{N+2n}
\end{array}\right|.
$$
Now, corresponding to the impulse equation, we also have
$$
\begin{array}{rcl}
z\,\dfrac{\d\Impulse}{\d U_{\alpha}}(\bV^j_\alpha)&=&
\d_x\left(\ubU\cdot \Hess(\Ham+\uc\Impulse)(\bV^j)+\bV^j\cdot\Euler(\Ham+\uc\Impulse)(\ubU)
-\dfrac{\d(\Ham+\uc\Impulse)}{\d U_{\alpha}}(\bV^j_\alpha)\right)\\
&&+\d_x\left(
\dfrac{\d^2(\Ham+\uc\Impulse)}{\d U_{\alpha,x}\d U_{\beta}}(\ubU^j_{\alpha,x})(\bV^j_{\beta})
+\dfrac{\d^2(\Ham+\uc\Impulse)}{\d U_{\alpha,x}\d U_{\beta,x}}(\ubU^j_{\alpha,x})(\bV^j_{\beta,x})\right)\\
&=&\d_x\left(\ubU\cdot \Linvar \bV^j+\Su\ubU_x\cdot\bV^j_x+((\Sd-\Sd^*)\ubU_x-\Su\ubU_{xx})\cdot\bV^j\right)
\end{array}
$$
where the convention is as before that linearization and derivatives are taken at $\ubU$. By integrating the relation here above, we obtain
$$
z\,\int_0^{\uXi}\dfrac{\d\Impulse}{\d U_{\alpha}}(\bV^j_\alpha)
\ =\ (\su\ubv_x)(0)\cdot[\bv^j_x]+\ *\ [\bv^j]+\ *\ z\,\int_0^{\uXi} \bV^j.
$$
We still need to check that it is not a trivial relation. But, since $\ubU$ is non trivial, there is a point where $\ubv_x$ is nonzero, otherwise $\ubv$ would be constant thus $\ubu$ and $\ubU$ would also be constant, a contradiction. 
Then, since assumptions of the proposition and terms of the equivalence we are currently proving are invariant by translation, we may assume that $\ubv_x(0)$ is nonzero. Now let us pick 
$\ell$
such that the 
$\ell$-th
 component of $\su(0)\ubv_x(0)$ is nonzero and, for any $\bV=(\bv,\bu)^T\in\R^N=\R^n\times\R^{(N-n)}$, denote by 
$\bv_{*}$
 the vector of $\R^{(n-1)}$ obtained from $\bv$ by deleting the 
 $\ell$-th
 component. Then, up to a nonzero multiplicative constant, $\Evans(z)$ is
$$
z^{N+1}\ \left|\begin{array}{rrlcrl}
&[\!\!\!\!&\bv^1]& \cdots &[\!\!\!\!&\bv^{N+2n}]\\ 
&[\!\!\!\!&(\bv^1_{*})_x]& \cdots &[\!\!\!\!&(\bv^{N+2n}_{*})_x]\\
&&\int_0^{\uXi} \frac{\d\Impulse}{\d U_{\alpha}}(\bV^1_\alpha)&\cdots&&\int_0^{\uXi} \frac{\d\Impulse}{\d U_{\alpha}}(\bV^{N+2n}_\alpha)\\
&&\int_0^{\uXi} \bV^1&\cdots&&\int_0^{\uXi} \bV^{N+2n}
\end{array}\right|.
$$

Up to a change of basis we may assume that $\bV^1(0\,;\cdot)=\ubU_x$, $\bV^j(0\,;\cdot)=\bU_{\lambda_{j-2n}}$ for $2n+1\leq j\leq N+2n$ and $(\bV^1(0\,;\cdot),\cdots,\bV^{2n}(0\,;\cdot))$ is a basis of the linear span of $\{\bU_{(\bUuz)_1},\cdots,\bU_{(\bUuz)_{n}}\bU_{(\bUuzx)_1},\cdots,\bU_{(\bUuzx)_{n}}\}$. With this choice, after setting $\widetilde{\bV}^1=\bV_z(0;\cdot)$,
$\Evans (z)$ is written up to a nonzero multiplicative constant
$$
z^{N+2}\ \left|\begin{array}{rrlrlcrl}
&[\!\!\!\!&\widetilde{\bv}^1]&[\!\!\!\!&\bv^2]& \cdots &[\!\!\!\!&\bv^{N+2n}]\\ [1ex]
&[\!\!\!\!&(\widetilde{\bv}_{*}^1)_x]&[\!\!\!\!&(\bv^2_{*})_x]& \cdots &[\!\!\!\!&(\bv^{N+2n}_{*})_x]\\[1ex]
&&\int_0^{\uXi} \frac{\d\Impulse}{\d U_{\alpha}}(\widetilde{\bV}^1_\alpha)&&\int_0^{\uXi} \frac{\d\Impulse}{\d U_{\alpha}}(\bV^2_\alpha)&\cdots&&\int_0^{\uXi} \frac{\d\Impulse}{\d U_{\alpha}}(\bV^{N+2n}_\alpha)\\[1ex]
&&\int_0^{\uXi} \widetilde{\bV}^1&&\int_0^{\uXi} \bV^2&\cdots&&\int_0^{\uXi} \bV^{N+2n}
\end{array}\right|\ +\ \cO(z^{N+3}).
$$
Since $\widetilde{\bV}^1$ satisfies $\ubU_x\ =\ \Lin\widetilde{\bV}^1$, it differs from $\bU_c$ by an element of the kernel of $\Lin$ which is spanned by $(\bV^1(0\,;\cdot),\cdots,\bV^{N+2n}(0\,;\cdot))$. This yields that, up to a multiplicative nonzero constant and an additive remainder $\cO(z^{N+3})$, $\Evans(z)$ reads
$$
z^{N+2}\ \left|\begin{array}{rrlrlcrl}
&[\!\!\!\!&\bv_c]&[\!\!\!\!&\bv^2]& \cdots &[\!\!\!\!&\bv^{N+2n}]\\ 
&[\!\!\!\!&((\bv_{*})_{c})_x]&[\!\!\!\!&(\bv^2_{*})_x]& \cdots &[\!\!\!\!&(\bv^{N+2n}_{*})_x]\\
&&\int_0^{\uXi} \frac{\d\Impulse}{\d U_{\alpha}}((\bU_c)_\alpha)&&\int_0^{\uXi} \frac{\d\Impulse}{\d U_{\alpha}}(\bV^2_\alpha)&\cdots&&\int_0^{\uXi} \frac{\d\Impulse}{\d U_{\alpha}}(\bV^{N+2n}_\alpha)\\
&&\int_0^{\uXi} \bU_c&&\int_0^{\uXi} \bV^2&\cdots&&\int_0^{\uXi} \bV^{N+2n}
\end{array}\right|
$$
or
$$
z^{N+2}\ \left|\begin{array}{crlrlcrl}
\ubv_x(0)&[\!\!\!\!&\bv_c]&[\!\!\!\!&\bV^2_1(0\,;\cdot)]& \cdots &[\!\!\!\!&\bv^{N+2n}(0\,;\cdot)]\\ 
(\ubv_{*})_{xx}(0)&[\!\!\!\!&((\bv_{*})_{c})_x]&[\!\!\!\!&(\bv^2_{*})_x(0\,\cdot)]& \cdots &[\!\!\!\!&(\bv^{N+2n}_{*})_x(0\,;\cdot)]\\
1&&\quad0&&\quad0&\cdots&&\quad0\\
\Impulse(\ubU)(0)&&\int_0^{\uXi} \frac{\d\Impulse}{\d U_{\alpha}}((\bU_c)_\alpha)&&\int_0^{\uXi} \frac{\d\Impulse}{\d U_{\alpha}}(\bV^2_\alpha(0\,;\cdot))&\cdots&&\int_0^{\uXi} \frac{\d\Impulse}{\d U_{\alpha}}(\bV^{N+2n}_\alpha(0\,;\cdot))\\
\ubU(0)&&\int_0^{\uXi} \bU_c&&\int_0^{\uXi} \bV^2(0\,;\cdot)&\cdots&&\int_0^{\uXi} \bV^{N+2n}(0\,;\cdot)
\end{array}\right|.
$$

%
We are ready to complete the proof by observing the latter determinant.
Indeed our assumption on $\mathcal{R}$ implies that the $(2n-1)$-st rows of the above matrix are linearly independent. 
Furthermore,
 the kernel of the corresponding linear map is the tangent space at $\ubU$ of the profiles manifold (profiles being identified when equal up to translation).
Thus the differential map of $\bU\mapsto (\Xi,\int_0^{\Xi}\Impulse(\bU),\int_0^{\Xi}\bU)$ is invertible on this tangent space if and only if the above determinant is non zero. 
Consequently, 
this map is full-rank if and only if the generalized kernel of $\Linz$ is of dimension $N+2$.
\end{proof}

Note that, by introducing Floquet exponents in previous arguments as in analogous computations in \cite{Serre}, one may obtain an alternative proof of Theorem~\ref{thm:absstab}.

\section{Galilean invariance}\label{s:gal}
In order to check that the hyperbolicity of the  Euler--Korteweg modulation equations \eqref{eq:EKW14} does not depend on $\sigma$, 
it is convenient to rewrite those equations in a form that is similar to \eqref{eq:EKLagW14th} for \eqref{eq:EKLagW14}. Substituting $\meanrho \sigma +j$ for  the mean momentum $\langle\rho_0\vits_0\rangle$ in  \eqref{eq:EKW14}, and manipulating the remaining mean values as in the proof of Theorem \ref{thm:EL},   we receive the system
\begin{equation}\label{eq:EKW14th}
\left\{\begin{array}{l}
\partial_T K \,+\,\partial_X (\sigma K)\,=\,0\,,\\ [5pt]
\partial_T\meanrho +\partial_X (\meanrho \sigma +j)\,=\,0\,,\\ [5pt]
\partial_T \left(\dfrac{\meanrho \sigma +j+D}{\meanrho}\right) + \partial_X\left(\dfrac{1}{2}\dfrac{(\meanrho \sigma +j)(\meanrho \sigma +j+2D)}{\meanrho^2} +\,\meanchem\right)\,=\,0\,, \\ [10pt]
\partial_T(\meanrho \sigma +j) \,+\,\partial_X\left(\dfrac{(\meanrho \sigma +j)^2+2jD}{\meanrho}\,+\,\meanrho\,\meanchem \,+\,K\,\Theta\,-\,\meanE
\right) \,=\,0\,,
\end{array}\right.
\end{equation}
together with the generalized Gibbs relation  \eqref{eq:Gibbs} $\dif \meanE \,=\,\meanchem \dif \meanrho\,+\,\Theta\,\dif K\,+\,\dfrac{j}{\meanrho}\,\dif D$.
Then, we easily check that \eqref{eq:EKW14th} is invariant by  the Galilean transformation 
$$(T,X,K,\meanrho,\sigma,D)\mapsto
(T,X-\underline{\sigma} T,K,\meanrho,\sigma-\underline{\sigma},D)$$ for any $\underline{\sigma}$.
Since this transformation leaves invariant all the `thermodynamic' variables 
$(K,\meanrho,D,\meanchem,\Theta,j,\meanE)$, it leaves \eqref{eq:EKW14th}  invariant just because it does so for the reduced system
$$\left\{\begin{array}{l}
\partial_T K \,+\,\partial_X (\sigma K)\,=\,0\,,\\ [5pt]
\partial_T\meanrho +\partial_X (\meanrho \sigma)\,=\,0\,,\\ [5pt]
\partial_T  \sigma + \partial_X\left(\dfrac{1}{2} \sigma^2\,+\,\dfrac{j+D}{\meanrho}\,\sigma\right)\,=\,0\,, \\ [10pt]
\partial_T(\meanrho \sigma) \,+\,\partial_X\left(\meanrho \sigma^2 + 2 j \sigma\right) \,=\,0\,.\end{array}\right.$$


\paragraph{Acknowledgement.} This work has been partly supported by
the European Research Council \href{http://math.univ-lyon1.fr/~filbet/nusikimo/nusikimo.htm}{ERC Starting Grant 2009, project
239983- NuSiKiMo}.

\newpage 

\bibliographystyle{plain}
\addcontentsline{toc}{section}{References}
\bibliography{BNR}

\end{document}